\documentclass[12pt,reqno]{amsart}
\usepackage{amsmath,amsthm,amssymb}
\usepackage{caption}
\usepackage{mathtools}
\usepackage{cite}
\usepackage{enumitem}
\usepackage{color}
\usepackage{url}
\usepackage[dvipsnames]{xcolor}
\usepackage{graphicx}
\usepackage{xspace}
\usepackage[unicode=true,colorlinks]{hyperref}
\hypersetup{
	linkcolor=blue,
	citecolor=blue,
}
\usepackage{cleveref}
\usepackage{leftidx}
\usepackage[margin=1.2in]{geometry}
\usepackage{comment}
\usepackage{tikz}

\usepackage{cleveref}

\newtheorem{theorem}{Theorem}[section]
\newtheorem{lemma}[theorem]{Lemma}
\newtheorem{definition}[theorem]{Definition}
\newtheorem{proposition}[theorem]{Proposition}
\theoremstyle{remark}
\newtheorem{remark}[theorem]{Remark}
\newtheorem{notation}[theorem]{Notation}

\newcommand{\Z}{\mathbb{Z}}
\newcommand{\R}{\mathbb{R}}

\DeclareMathOperator{\hol}{hol}
\DeclareMathOperator{\SL}{SL}
\DeclareMathOperator{\GL}{GL}
\DeclareMathOperator{\covol}{covol}
\DeclareMathOperator{\dist}{dist}

\newcommand{\bfe}{\mathbf{e}}

\DeclarePairedDelimiter\abs{\lvert}{\rvert}%
\DeclarePairedDelimiter\norm{\lVert}{\rVert}%

\makeatletter
\let\oldabs\abs
\def\abs{\@ifstar{\oldabs}{\oldabs*}}
\let\oldnorm\norm
\def\norm{\@ifstar{\oldnorm}{\oldnorm*}}
\makeatother

\title[Non-uniquely ergodic directions in eigenform loci]{Hausdorff dimension of non-uniquely ergodic directions in eigenform loci}
\author{Yuming Wei}
\author{Pengyu Yang}
\date{April 2026}

\begin{document}
\begin{abstract}
  We consider the eigenform locus $\mathcal{E}_D$ in $\mathcal{H}(1,1)$ where $D$ is not a square. We prove that for any translation surface $(X,\omega) \in \mathcal{E}_D$, the set of non-uniquely ergodic directions has Hausdorff dimension $1/2$, except when $D=5$ and $(X,\omega)$ lies in the Teichm\"uller curve generated by the regular decagon.
\end{abstract}
\maketitle

\section{Introduction}
A \textit{translation surface} is a pair $(X, \omega)$, where $X$ is a compact Riemann surface and $\omega$ is a non-zero holomorphic $1$-form on $X$. The moduli space of genus $g$ translation surfaces is stratified according to the multiplicities of the zeros of $\omega$, and there is a natural $\GL_2^+(\R)$-action on each stratum. For $g=2$, the stratum $\mathcal{H}(1,1)$ comprises surfaces with two simple zeros. Within this stratum, we consider the \textit{eigenform locus} $\mathcal{E}_D$, which is a closed, $\mathrm{GL}_2^+(\mathbb{R})$-invariant subvariety. 

\begin{definition}
We say that a pair $(X,\omega)$ is an eigenform if $\operatorname{Jac}(X)$ has real multiplication by a real quadratic order $\mathcal{O}_D$ and $\omega$ is an eigenform for this action. 
\end{definition}

For a translation surface $(X, \omega)$, any direction $\theta$ defines a \textit{straight-line flow} (or directional flow) $\{f_t^\theta\}_{t \in \mathbb{R}}$ on $X$. For every point $x \in X$, the \textit{orbit} of $x$ in direction $\theta$ is the set 
\[ \mathcal{O}_\theta(x) = \{ f_t^\theta(x) : t \in \mathbb{R} \}. \]

A direction $\theta$ is said to be \textit{uniquely ergodic} if for every continuous function $\varphi \in C(X)$, and for every $x \in X$ whose orbit does not meet the singularities of $\omega$, the time average of $\varphi$ converges to the space average:
\begin{equation}
\lim_{T \to \infty} \frac{1}{T} \int_0^T \varphi(f_t^\theta(x)) \, dt = \frac{1}{\text{Area}(X)} \int_X \varphi \, d\mu_{area}.
\end{equation}

We then define the \textit{non-uniquely ergodic directions} (NUE) as the set:
\begin{equation}
\Theta_{NUE}(X, \omega) = \{ \theta : f_t^\theta \text{ is not uniquely ergodic} \}.
\end{equation}

The purpose of this paper is to prove the following theorem.
\begin{theorem}\label{thm:main theorem}
Let $D>0$ be a nonsquare discriminant of a real quadratic order. Let $\mathcal{E}_D$ be the eigenform locus in the stratum $\mathcal{H}(1,1)$. Then, except when $D=5$ and $(X,\omega)$ lies on the $\mathrm{GL}_2^+(\R)$-orbit through the regular decagon, every translation surface $(X, \omega) \in \mathcal{E}_D$ satisfies the Hausdorff dimension of the set of non-uniquely ergodic directions 
    \begin{align}
        \dim_H(\Theta_{NUE}(X, \omega)) = \frac{1}{2}.
    \end{align}
\end{theorem}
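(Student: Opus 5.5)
We split the statement into the upper bound $\dim_H\Theta_{NUE}(X,\omega)\le 1/2$ and the lower bound $\ge 1/2$. The upper bound is general: Masur's theorem gives $\dim_H\Theta_{NUE}\le 1/2$ for every translation surface. We will invoke it directly, or rederive it from Masur's criterion as follows. A non-uniquely ergodic direction gives a divergent Teichm\"uller geodesic. Counting saddle connections with holonomy of length $\le R$, which grow like $R^2$, then gives covers of the divergent directions whose exponent yields $1/2$. So all the work lies in the lower bound.

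For the lower bound we split by the orbit type of $(X,\omega)$. By McMullen's classification of $\mathrm{GL}_2^+(\R)$-orbit closures in $\mathcal{H}(1,1)$ for nonsquare $D$, the orbit of $(X,\omega)\in\mathcal{E}_D$ is either dense in $\mathcal{E}_D$ or closed, giving a Weierstrass-type Teichm\"uller curve. By Calta and McMullen, in the eigenform locus a closed orbit in $\mathcal{H}(1,1)$ occurs only for $D=5$ with the regular decagon. In every other case the surface $(X,\omega)$ is \emph{not} a lattice surface.

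\textbf{Non-lattice case.} Here the orbit is dense in $\mathcal{E}_D$. Following Cheung--Hubert--Masur, we look for a \emph{completely periodic direction that is not parabolic}. Calta shows that every surface in $\mathcal{E}_D$ has completely periodic directions, e.g.\ any saddle connection direction. In such a direction the surface splits into cylinders, and it is known that the moduli can be chosen so that two cylinders have irrational ratio of moduli, since the surface is not Veech. We then build a cylinder-twist (Cantor-set) construction. Alternating between two transverse periodic decompositions, as in the Masur--Smillie / Cheung construction, and using Dehn twists along cylinders with incommensurable moduli, we produce nested families of directions converging to NUE directions. This is the ``$\mathbb{Z}$-cylinder'' Diophantine argument, in which the continued-fraction-like parameters grow geometrically. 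A mass distribution estimate then shows the resulting Cantor set has dimension $\ge 1/2$. This step mirrors Cheung's result that $\dim_H\Theta_{NUE}=1/2$ for the L-shaped table with irrational parameter.

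An alternative for this step is to use density of the orbit together with the upper semicontinuity of $\dim_H$ under $\mathrm{GL}_2$, since the action on directions is projective and hence bi-Lipschitz away from one point. This reduces the problem to exhibiting one surface in $\mathcal{E}_D$ with a two-cylinder decomposition in which the moduli ratio is irrational and well controlled. However, Hausdorff dimension is not continuous in the surface, so the direct construction on each surface is safer.

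\textbf{Main obstacle.} The hard step is making the Cantor-set construction uniform. At every stage of the construction we must find a new periodic direction whose cylinder moduli are again incommensurable, with quantitative control of the heights and circumferences. This is needed so that the lengths of the nested intervals scale like $q^{-2}$ and their number like $q$, which is what gives exponent $1/2$. Here we would exploit the eigenform structure. Calta's splitting-prototype description (the $(a,b,c,e)$ prototypes for $\mathcal{H}(1,1)$) and the real-multiplication relations force the cylinder moduli ratios to lie in $\mathbb{Q}(\sqrt D)$ with bounded denominators. This should give the Diophantine control required for the dimension estimate, excluding exactly the decagon case, where the ratios become rational.
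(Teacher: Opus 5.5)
\textbf{Verdict: genuine gap in the lower bound.} Citing Masur for the upper bound is what the paper does. Drop the counting sketch you add as a rederivation: covering saddle-connection directions of length $\asymp R$ by intervals of size $\asymp R^{-2}$ and using quadratic growth only gives exponent $1$.

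The gap sits exactly at the step you call the main obstacle. You describe a Masur--Smillie/Cheung-type nested construction in words, but you never give two things:
\begin{itemize}
\item a quantitative condition that children inherit, and
\item an estimate showing that, along \emph{every} branch, only a small fraction of candidate children violate it.
\end{itemize}
Without both there is no uniform lower bound on the branching number, hence no dimension estimate. Your proposed source of control, that moduli ratios lie in $\mathbb{Q}(\sqrt D)$ with bounded denominators, is unsupported and aimed at the wrong target. What stops such a construction is the renormalized lattice acquiring a short vector, which destroys the next lattice-point count; the arithmetic of cylinder moduli is not the issue.

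You also set aside the density route because $\dim_H$ is not continuous in the surface, but density can be used without any continuity. The paper shows that having a $2H$-normal root splitting of type $(d,\lambda)$ with $\|w\|>W_*$ is an open condition in $\mathcal{E}_D$ (Proposition~\ref{prop:normal-root}). The whole construction depends only on that root, and $\dim_H\Theta_{NUE}$ is constant along $\mathrm{GL}_2^+(\mathbb{R})$-orbits. So a dense orbit meets this open set and the argument runs on a surface in that orbit. This is what gives uniformity over all non-decagon surfaces.

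\textbf{How the paper supplies the missing mechanism.} It works with splittings $(L_1,L_2,w)$ into two tori. It twists along a primitive $v\in L_1$ together with the \emph{parallel} vector $\frac{\lambda}{\hat d(v)}v\in L_2$. Parallelism makes every shear admissible and preserves the $(d,\lambda)$-type and covolume (Lemma~\ref{lem:new splitting}); this is how the construction stays in $\mathcal{E}_D$. Nothing in your cylinder-decomposition scheme plays this role.

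The inherited condition is $H$-normality, $\lambda_1(g_wL_1)\ge H\Delta\sqrt{Q_w/R_w}$. It lets the primitive lattice-point count produce at least $\frac{1}{4C_0d}\frac{R_w}{Q_w}$ candidate twists in the thin box $\mathcal V(w)$.

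Lemma~\ref{lem:bad-child-estimate} then shows that at most $\frac{4N_D}{Hd}\frac{R_w}{Q_w}$ candidates have all three children $s=1,2,3$ non-normal. The argument runs as follows.
\begin{itemize}
\item A second difference over the three shears eliminates $w\times u$ and the shear terms, forcing $\Xi=0$.
\item All bad $v$ then produce a single primitive $z_0$ nearly parallel to $w$.
\item The irrationality of $\lambda/\hat d$ makes the resonant values $\frac{q(T+a)}{1+a}$ uniformly separated. This is where nonsquareness of $D$ enters the counting.
\item Hence the bad $v$ lie on at most $2N_D$ affine lines parallel to $z_0$.
\item Each line holds few candidates, because $H$-normality of $w$ gives $|P_w(z_0)|\ge\frac H2 dQ_w$.
\end{itemize}
Some estimate of this kind, uniform along the tree, is what your proposal would need to make the Cantor construction yield dimension $\ge 1/2$.
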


Our proof of the Hausdorff-dimension proceeds by constructing
a tree of splittings whose limit set is contained in a subset of non-uniquely ergodic directions and has Hausdorff dimension $\frac{1}{2}$.
The central notion in the construction is that of an $H$-normal splitting. We prove that every sufficiently long $H$-normal splitting
has sufficiently many children that are again $H$-normal; see
Theorem~\ref{th:induction step}.

To obtain these children, we first use a primitive lattice-point count to produce many candidate twisting vectors. A candidate $v$ is called $H$-good if, for at least one parameter $s\in\{1,2,3\}$, the
resulting child splitting is $H$-normal. The use of three possible twist parameters is a crucial feature of the argument. Three is the smallest number required by our method, since the estimates associated with three consecutive twists allow us to control the bad candidates; see Lemma~\ref{lem:bad-child-estimate}. For each $H$-good twist $v$, we choose one such parameter $s(v)$, and distinct $H$-good twists yield distinct $H$-normal children. Combining the primitive lattice-point
count with the bad-child estimate therefore gives the branching estimate in Theorem~\ref{th:induction step}.

\subsection{History}
A translation surface is flat everywhere except at a finite set of singular points, which correspond to the zeros of the holomorphic $1$-form $\omega$. Geometrically, these singularities are conical points with cone angles of the form $\theta = 2\pi(k+1)$, where $k \in \mathbb{N}$ is the multiplicity (or order) of the zero. 

The Hodge bundle over the moduli space $\mathcal{M}_g$ is partitioned into \textit{strata} according to the multiplicities of these zeros. A specific stratum is denoted by $\mathcal{H}(k_1, \dots, k_n)$, consisting of pairs $(X, \omega)$ where the $1$-form $\omega$ has $n$ distinct zeros with multiplicities $k_1, \dots, k_n$. The sum of these multiplicities must satisfy:
\begin{equation*}
    \sum_{i=1}^n k_i = 2g - 2.
\end{equation*}
Each stratum is equipped with a natural Masur-Veech measure $\mu_{MV}$, induced by the pullback of Euclidean measure through local period coordinate charts. 

The study of ergodic directions on translation surfaces has a rich history in Teichmüller dynamics. The Veech dichotomy establishes a rigid classification for Veech surfaces: every direction $\theta$ is either completely periodic or uniquely ergodic. However, Veech surfaces constitute a highly restricted, measure-zero subset of the moduli space. In 1986, Kerckhoff-Masur-Smillie\cite{KMS} showed that, for any translation surface, almost every direction is uniquely ergodic. Equivalently, the Lebesgue measure of $\Theta_{NUE}(X, \omega)$ is 0. 
A significant refinement was provided by Masur and Smillie \cite{MS}, who demonstrated that for every component $\mathcal{C}$ of each stratum $\mathcal{H}(\alpha)$ of genus $g \ge 2$, there exists a constant $c(\mathcal{C})$ such that for $\mu_{MV}$-almost every translation surface $(X, \omega) \in \mathcal{C}$, the Hausdorff dimension of the set of non-uniquely ergodic directions equals $c(\mathcal{C})$.
In 1992, Masur \cite{Ma2} gave an upper bound $c(\mathcal{C})\le \frac{1}{2}$.
Later Athreya and Chaika \cite{AC} showed that $c(\mathcal{C})=\frac{1}{2}$ for $\mathcal{C}=\mathcal{H}(2)$. More recently, Chaika and Masur\cite{CM} showed that $c(\mathcal{C})=\frac{1}{2}$ for $\mathcal{C}=\mathcal{H^{\mathrm{hyp}}}(2g-2)$ and $\mathcal{C}=\mathcal{H^{\mathrm{hyp}}}(g-1,g-1)$.

When $g=2$, although the results of Athreya-Chaika \cite{AC} and Chaika-Masur\cite{CM} establish a uniform dimension for $\mu_{MV}$-almost every surface within a stratum, they do not directly apply to surfaces belonging to the eigenform locus $\mathcal{E}_D$. 
In both $\mathcal{H}(2)$ and $\mathcal{H}(1,1)$, the locus $\mathcal{E}_D$ constitutes a closed invariant subvariety of measure zero. 
In the stratum $\mathcal{H}(2)$, a theorem of McMullen \cite[Theorem 10.1]{McM} states that the $\GL_2^+(\R)$-orbit of every $(X,\omega)\in \mathcal{E}_D$ is a Teichm\"uller curve.
In contrast, the dynamics in the stratum $\mathcal{H}(1,1)$ are more complex. For the eigenform locus $\mathcal{E}_D\subset \mathcal{H}(1,1)$, when $D$ is square, the $\operatorname{GL}_2^+(\R)$-orbit of $X$ is a Teichm\"uller curve if after a suitable $\GL_2^+(\R)$-normalization, all absolute and relative periods lie in a common rank-two lattice(we call this translation surface a square-tiled surface); When $D$ is nonsquare, the only primitive Teichm\"uller curve generated by a form with simple zeros is the one generated by the regular decagon for $D=5$. (See \cite{McM2}.)

The systematic study of non-uniquely ergodic directions specifically within the eigenform locus for square $D$ began with the work of \text{Cheung \cite{Ch1}} in the case $D=4$. A translation surface $(X, \omega)$ in the locus $\mathcal{E}_4 \subset \mathcal{H}(1,1)$ can be realized geometrically as a branched double cover of a flat torus, typically constructed by gluing two identical flat tori along a slit $w$. In the specific case where the slit $w$ is horizontally oriented, Cheung \cite{Ch1} gave a sufficient condition ensuring that the set of non-uniquely ergodic directions has Hausdorff dimension $1/2$. In 2011, Cheung, Hubert and Masur \cite{2011Dichotomy} showed that the Hausdorff dimension of the set of nonergodic directions takes only the values $0$ and $1/2$, a value explicitly determined by the \textit{continued fraction expansion} of the length of $w$. Recently, the first author \cite{Wei} generalized this result by removing the restriction on the orientation of $w$, establishing the dimensional estimates for slits in arbitrary directions.  

We focus primarily on the case where $D$ is nonsquare. We show that the dimension property becomes intrinsic to the locus: for every translation surface in the eigenform locus which does not lie on the $\operatorname{GL}_2^+(\mathbb{R})$-orbit through the regular decagon, the Hausdorff dimension of non-uniquely ergodic directions is $1/2$.

\section{Splittings and Dehn twists}

Given a translation surface $(X,\omega)\in \mathcal{H}(1,1)$, a splitting $(L_1,L_2,w)$ is a decomposition of $(X,\omega)$ as a connected sum of two flat tori $T_1, T_2$ along a slit $w\in \mathbb{C}$ joining two zeros. 
Equivalently, we can write $(X,\omega)$ as a connected sum
\begin{align}
    (X,\omega)=(T_1,\omega_1) \underset{w}{\#}\left(T_2, \omega_2\right)
\end{align}
where $(T_i,\omega_i)=(\mathbb{C}/L_i,dz)$.

\begin{figure}[htb]
\centering
\begin{tikzpicture}[scale=0.9]

    \coordinate (A) at (0,2);
    \coordinate (B) at (3,2);
    \coordinate (C) at (3,4);
    \coordinate (D) at (0,4);
    
    \coordinate (E) at (6,2.8);
    \coordinate (F) at (6,4.8);
    
    \coordinate (G) at (-0.4,0.4);
    \coordinate (H) at (2.6,0.4);
    
    \coordinate (I) at (3.5,0.4);
    \coordinate (J) at (6.5,1.2);

    \draw (D) -- (C) -- (F) -- (E) -- (B) -- (A) -- cycle;
    \draw (B) -- (C); 

    \draw (A) -- (G) -- (H) -- (B);
    
    \draw (B) -- (I) -- (J) -- (E);

    \node at (1.5,3.1) {\Large $R_1$};
    \node at (4.4,3.5) {\Large $R_2$};
    \node at (1.2,1.1) {\Large $C_1$};
    \node at (4.8,1.4) {\Large $C_2$};

\end{tikzpicture}
\caption{Normal form for $(X,\omega)$}
\label{fig:X}
\end{figure}

For geometric intuition, we represent $(X, \omega)$ as a polygon with paired sides identified. 
We choose a pair of primitive vectors $(v_1, v_2) \in L_1 \times L_2$ satisfying 
\begin{equation}\label{eq:orientation}
    (v_1 \times w)(v_2 \times w) > 0,
\end{equation}
and
\begin{equation}\label{eq:cylinder}
   \norm{v_i \times w} \leq \text{area}(T_i).  
\end{equation}
Then $(v_1, v_2)$ uniquely determines such a curve $\gamma$. Condition (\ref{eq:cylinder}) ensures that $v_i$ corresponds to a simple closed curve $\gamma_i$ on $T_i$ bounding a cylinder $C_i$ disjoint from the slit. The complement $X\setminus \bigcup C_i$ is an open annulus whose core curve is precisely $\gamma$. 
Then each $T_i$ decomposes into a cylinder $C_i$ and a parallelogram $R_i$ spanned by $\{v_i, w\}$. Using the $\SL_2(\mathbb{R})$-action, we normalize $R_1$ to a rectangle. We then adjoin $R_2$ to the right edge of $R_1$ (the slit $\alpha_+$) and attach $C_i$ to the lower edges of $R_i$ without overlap. The boundary identifications are as follows: the right edge of $R_2$ is identified with the left edge of $R_1$ to form the slit $\alpha_-$, while $R_i$ and $C_i$ are glued along their respective horizontal edges.

In \cite{MS}, the nonergodic directions on a translation surface $(X,\omega)$ can be constructed by producing a sequence of splittings in the following way:
\begin{theorem}[\cite{MS} Theorem 2.1]\label{thm:MS}
Let $E_n$ be a sequence of metric cylinders in $(X,\omega)$ so that $E_n$ is disjoint from $E_{n+1}$.  Then we define a sequence of partitions $P_n=\left\{A_n, B_n\right\}$ as follows. If $n$ is even then $A_n=E_n$ and $B_n=E_n^c$. If $n$ is odd then $A_n=E_n^c$ and $B_n=E_n$. We can rotate the coordinate system so that $\theta_{\infty}$ is the vertical direction. Let $h_n$ be the sum of the horizontal components of the boundary curves separating $A_n$ and $B_n$. If \\
i) $\lim _{n \rightarrow \infty} h_n=0$,\\
ii) $0<c \leq \mu\left(A_n\right) \leq c^{\prime}<1$ for some $c, c^{\prime}$,\\
iii) $\sum_{n=1}^{\infty} \mu\left(A_n \Delta A_{n+1}\right)<\infty$.

Then the vertical foliation is nonergodic.
\end{theorem}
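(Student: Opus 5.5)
The plan is to build a nontrivial measurable set invariant under the vertical flow, as a limit of the $A_n$. Normalize $\mu$ to be the area measure with total mass $1$; rotate so that $\theta_\infty$ is vertical, and write $f_t$ for the vertical flow.

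\textbf{Step 1: a limit set $A$.} Condition iii) gives $\sum_n \mu(A_n\Delta A_{n+1})<\infty$. Hence $\sum_n \int |\mathbf 1_{A_{n+1}}-\mathbf 1_{A_n}|\,d\mu<\infty$. By monotone convergence, $\sum_n|\mathbf 1_{A_{n+1}}-\mathbf 1_{A_n}|$ is finite almost everywhere. So for a.e.\ $x$ the indicators $\mathbf 1_{A_n}(x)$ change only finitely often, and they converge a.e.\ to $\mathbf 1_A$ for some measurable $A$. Dominated convergence then gives $\mu(A_n\Delta A)\to 0$. Since $\mu(A_n)\to\mu(A)$, condition ii) yields $0<c\le\mu(A)\le c'<1$.

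\textbf{Step 2: the $A_n$ are almost invariant.} The common boundary $\partial A_n=\partial E_n$ is a finite union of boundary curves of the cylinder $E_n$. These are closed curves made of saddle connections, hence finite unions of straight segments. Fix $t\in\R$. A point $x\in A_n$ with $f_tx\notin A_n$ must have its vertical trajectory segment $\{f_sx:0\le s\le t\}$ cross $\partial A_n$. Take one straight segment $\sigma$ of $\partial A_n$ with horizontal component $h(\sigma)$. The set of points whose vertical trajectory of length $|t|$ meets $\sigma$ is the image of the parallelogram spanned by $\sigma$ and the vertical vector of length $|t|$. That image has area at most $|t|\,h(\sigma)$, and measure preservation together with the finiteness of singular trajectories makes this rigorous. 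Summing over segments, and treating $A_n^c$ symmetrically, gives
\[
\mu(f_tA_n\,\Delta\,A_n)\le 2|t|\,h_n .
\]

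\textbf{Step 3: invariance and conclusion.} Since $f_t$ preserves $\mu$,
\[
\mu(f_tA\,\Delta\,A)\le \mu(f_tA\,\Delta\,f_tA_n)+\mu(f_tA_n\,\Delta\,A_n)+\mu(A_n\,\Delta\,A)\le 2\mu(A_n\Delta A)+2|t|h_n .
\]
By Step 1 and condition i), the right-hand side tends to $0$ as $n\to\infty$. Therefore $\mu(f_tA\Delta A)=0$ for every $t$, so $A$ is (mod $0$) invariant under the vertical flow. By Step 1 it satisfies $0<\mu(A)<1$, so the vertical flow is not ergodic with respect to Lebesgue measure, i.e.\ the vertical foliation is nonergodic.

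\textbf{Main obstacle.} The only delicate point is the flux estimate in Step 2. It requires that $\partial A_n$ is a finite union of straight segments whose horizontal components sum to $h_n$. It also requires that the measure-zero set of trajectories hitting singularities can be ignored. Everything else is a routine $L^1$-Cauchy argument.
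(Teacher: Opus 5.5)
The paper quotes this result from \cite{MS} without proof, and your argument is correct and essentially the original Masur--Smillie proof. Condition iii) makes $\mathbf 1_{A_n}$ converge a.e.\ and in $L^1$ to some $\mathbf 1_A$, with $c\le\mu(A)\le c'$ by ii); condition i) gives the flux bound $\mu(f_tA_n\Delta A_n)\le 2|t|h_n$, which forces $\mu(f_tA\Delta A)=0$. The one point worth stating explicitly is why $\sum_\sigma|h(\sigma)|\le h_n$: every boundary saddle connection of a metric cylinder is parallel to its core curve, so their horizontal components have a common sign and add up to the horizontal component of the boundary curves. (Note also that the disjointness of $E_n$ and $E_{n+1}$ is never needed in the proof; it only serves to make iii) checkable in applications.)
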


\subsection{Dehn twist}
A Dehn twist is a specific type of self-homeomorphism of a surface X. It represents a ``cut-twist-glue'' operation along a simple closed curve $\gamma \subset X$.

Following \cite[Section 3]{CM}, we show that any splitting $(L_1, L_2, w)$ gives rise, via Dehn twists, to sufficiently many splittings $(L_1^{\prime}, L_2^{\prime}, w^{\prime})$.
In Figure~\ref{fig:saddle_conn},  we view  $C_i \subset T_i$ as a cylinder disjoint from the slit $w_{\pm}$, with core curve $\gamma_i$ having holonomy $v_i$ not parallel to $w$. Concatenating the unique translates of $\gamma_i$ passing through the midpoint of $w$ yields a simple closed curve $\gamma$ with holonomy $v_1 + v_2$.

\begin{figure}[htb]
    \centering
    \begin{tikzpicture}[scale=1.5, >=stealth]
        \coordinate (A) at (0,0);
        \coordinate (B) at (2,0);      
        \coordinate (C) at (4,1);      
        \coordinate (Bw) at (2,2);
        \coordinate (D) at (6,1);
        \coordinate (E) at (8,2);
        \node at (1,1) {\large $R_1$};
        \node at (3,1.5) {\large $R_2$};
        
        \coordinate (Aw) at (0,2);     
        
        \draw[->] (A) -- node[below] {$v_1$} (B);
        \draw[->] (B) -- node[below right] {$v_2$} (C);
        \draw[->] (C) -- node[below right] {$v_1$} (D);
        \draw[->] (D) -- node[below right] {$v_2$} (E);

        \draw[->] (Aw) -- node[left] {$w^-$} (A);
        \draw[->] (B) -- node[left] {$w$} (Bw);
        \draw (B) -- ++(0,2);
        \draw (C) -- ++(0,2);
        \draw (D) -- ++(0,2);
        \draw (E) -- ++(0,2);
        \draw[->] (C) -- ++(0,2); 
        
        \draw (Aw) -- ++(2,0) -- ++(2,1) -- ++(2,0) -- ++ (2,1);
        
        \draw[dashed][->][red] (B) --node[black]{$w+v_1+v_2$} (6,3);
        \draw[dashed][->][blue] (4,3) -- (0,0);
        \draw[dashed][blue] (C) -- (8,4);

        \coordinate (F) at (-0.5,-0.5);
        \coordinate (G) at (1.5,-0.5);
        \draw (A) -- (F);
        \draw (F) -- (G);
        \draw (B) -- (G);
        \node at (0.75,-0.25) {\large $C_1$};

        \coordinate (H) at (3,-0.5);
        \coordinate (I) at (5,0.5);
        \draw (B) -- (H);
        \draw (H) -- (I);
        \draw (C) -- (I);
        \node at (3.7,0.25) {\large $C_2$};
        
    \end{tikzpicture}
    \caption{$\beta_+^{1}$ has holonomy vector $w+v_1+v_2$.}  
    \label{fig:saddle_conn}
\end{figure}

Let $\beta_{\pm}^s$ be the curves obtained by twisting $w_{\pm}$ relative to their endpoints $s$ times about $\gamma$ in the positive sense, i.e. right-twist if $s>0$, and left-twist if $s<0$. $s$ is called the shear parameter or twist parameter. In general, the geodesic representative of a twisted curve is a finite sequence of saddle connections. Let $w^s$ be the common holonomy vector of $\beta_{ \pm}^s$. Note that if $v_1 \times w>0$, i.e. $\gamma$ is positively oriented with respect to $\alpha_{ \pm}$, then 
\begin{align}\label{def: w_s}
    w_s=w=s (v_1+v_2).
\end{align}

\begin{lemma}\cite[Lemma 3.1]{CheM}\label{lem:CM}
The twisted curves $\beta_{+}^k$ and $\beta_{-}^k$ are simultaneously realized by a (single) saddle connection if and only if $w$ and $w_k$ lie on the same side of $v_1$ and $v_2$, i.e. 
\begin{align}\label{eq:positively oriented}
(v_1 \times w_k)(v_2 \times w_k)>0.
\end{align}

\end{lemma}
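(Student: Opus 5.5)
This is an iff statement, so I will prove each direction by looking at the developed picture in Figure~\ref{fig:saddle_conn}. The twisted curve $\beta_\pm^k$ is homotopic rel endpoints to the concatenation of $w_\pm$ with $k$ copies of $\gamma$. So its straight-line candidate is the segment with holonomy $w_k=w+k(v_1+v_2)$, starting at the same zero as $w_\pm$. The sign in \eqref{def: w_s} should read $w_k=w+k(v_1+v_2)$; this matches the figure, where $\beta_+^1$ has holonomy $w+v_1+v_2$. A homotopy class rel endpoints contains a single saddle connection exactly when this straight segment, developed from the starting zero, hits no cone point before its endpoint. Using the $\SL_2(\R)$ normalization, I assume $w$ is vertical and $v_1\times w>0$.

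For the ``if'' direction, assume \eqref{eq:positively oriented}. I unfold the annulus $X\setminus\bigcup C_i$, whose core curve is $\gamma$, together with the parallelograms $R_1,R_2$, into an infinite strip. This strip is tiled by translates of $R_1$ and $R_2$, alternating and shifted by $v_1$ and $v_2$, exactly as in Figure~\ref{fig:saddle_conn}. The segment from the bottom of $w$ with holonomy $w_k$ stays inside this strip. Its horizontal extent is bounded by the bottom and top boundary polygonal lines, whose edges are the translates of $v_1$ and $v_2$. Since $w_k$ lies on the same side of $v_1$ and of $v_2$ as $w$, the segment crosses every vertical translate of $w_\pm$ transversally and in the interior. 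The only vertices of the strip are the endpoints of these translates and the vertices of the boundary lines, which lie at lattice combinations of $v_1,v_2$ and $w$. The sign condition forces the segment to pass strictly between consecutive vertices on the lower and upper boundary lines. Concretely, the sign of $v_i\times(\text{position}-\text{vertex})$ stays constant along the segment, so it meets no vertex except its endpoint at $w+k(v_1+v_2)$. Hence $\beta_+^k$ is realized by a saddle connection. The same picture, translated by $v_1$ (the strip starting at the other endpoint), handles $\beta_-^k$. Both have holonomy $w_k$, so they are realized simultaneously.

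For the ``only if'' direction, suppose $(v_1\times w_k)(v_2\times w_k)\le 0$. Then $w_k$ lies weakly between the directions $v_1$ and $v_2$, that is, on the other side of one of them compared with $w$. Take the intermediate vertex of the boundary polygonal line, for example the point $v_1$ at the end of the first $v_1$ edge, or $k v_1+(k-1)v_2$ and similar points. I will show that the straight segment with holonomy $w_k$ either passes through this vertex or leaves the strip through the boundary edge. In the second case it enters a cylinder $C_i$, so it is not in the homotopy class of the strip-concatenation. In either case the geodesic representative of one of $\beta_\pm^k$ bends at a zero, so it is a chain of at least two saddle connections rather than a single one. The threshold case, equality, gives a segment through a cone point, which is also excluded.

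I expect the main obstacle to be the bookkeeping in the ``if'' direction. I need to check that the segment never hits a cone point for every $k$, including negative $k$ (left twists), and for both $\beta_+$ and $\beta_-$. I would handle this with a convexity argument. The region of the developed strip seen from the starting zero is bounded by the two boundary polygonal lines. The sign condition says the target $w_k$ lies in the same cone, bounded by $v_1$ and $v_2$ directions, as $w$. So the segment separates the lower vertices from the upper ones, and the determinant signs verify this directly. Since this is Lemma~3.1 of \cite{CheM}, I could also simply cite it, but this is the self-contained argument I would write.
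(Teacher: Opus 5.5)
The paper does not prove this lemma; it only cites Cheung--Masur. Their argument is the strip picture you use: develop the annulus $R_1\cup R_2$, with core $\gamma$, as the region between the polygonal line $P$ with vertices $m\sigma$, $v_1+m\sigma$ (where $\sigma:=v_1+v_2$) and its translate $P+w$. Then ask whether the segment from the starting vertex to its translate by $w_k$ stays in the closed strip and avoids all vertices. Your strategy is the right one, and you are right that \eqref{def: w_s} should read $w_s=w+s(v_1+v_2)$.

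\textbf{The gap.} The decisive step is asserted rather than proved, and the one concrete justification you give is wrong. Along the segment $b+tw_k$, the quantity $v_i\times(b+tw_k-\text{vertex})$ is strictly monotone in $t$, not of constant sign. It also does not decide which side of the segment a vertex lies on; that is decided by the sign of $w_k\times(\text{vertex}-b)$. A convexity argument is not available either: unless $v_1\parallel v_2$, each boundary line alternates left and right turns, so the strip has reflex vertices.

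\textbf{The missing check.} Normalize $v_i\times w>0$ and note that $\sigma\times w_k=\sigma\times w=v_1\times w_k+v_2\times w_k>0$ for all $k$. In particular the product condition just says both factors are positive. Take the curve starting at the lower end $0$ of the left edge of $R_1$, with $k\ge1$. Both boundary lines are graphs over the coordinate transverse to $w$, with breakpoints only at vertices, so it suffices to look at the vertices whose transverse coordinate lies strictly between those of $0$ and $w_k$. For these,
\[
w_k\times m\sigma=-m\,\sigma\times w,\qquad w_k\times(v_1+m\sigma)=-v_1\times w_k-m\,\sigma\times w,
\]
\[
w_k\times(w+m\sigma)=(k-m)\,\sigma\times w,\qquad w_k\times(w+v_1+m\sigma)=(k-m-1)\,\sigma\times w+v_2\times w_k,
\]
with $0\le m\le k-1$, and $m\ge1$ in the first and third formulas.

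Hence all lower vertices $x$ satisfy $w_k\times x<0$ and all upper ones satisfy $w_k\times x>0$ if and only if $v_1\times w_k>0$ and $v_2\times w_k>0$. Only two vertices are critical: the lower vertex $v_1$ next to the start and the upper vertex $w_k-v_2$ next to the end. All other vertices are controlled by the positive invariant $\sigma\times w$.

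For the curve starting at $v_1$, interchange $v_1$ and $v_2$; this is not a literal translate of the same picture, since the strip is invariant under $\sigma$, not $v_1$. The case $k\le-1$ is the mirror computation. This gives both directions at once, and in fact shows that each of $\beta_\pm^k$ separately requires the condition.

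\textbf{The ``only if'' direction.} You also need to explain why a segment that passes through a vertex or exits the strip rules out a saddle connection in the homotopy class. All cone angles are $4\pi$, so geodesic representatives rel endpoints are unique. The taut path in the closed strip is locally geodesic: at each bend the angle is at least $\pi$ on the side facing away from the boundary, and exceeds $2\pi$ on the other side. So the taut path is the geodesic representative, and it bends at a zero. For the same reason, your criterion that the class contains a saddle connection iff the developed segment ``hits no cone point'' needs the proviso that the segment stays inside the strip.
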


\begin{definition}\label{def:(L_1^s,L_2^s)}
Let $(L_1,L_2,w)$ be a splitting of $(X,\omega)$. Let $v_1\in L_1$ and $v_2\in L_2$ be primitive vectors satisfying (\ref{eq:orientation}) and (\ref{eq:cylinder}). 

Choose $u_1\in L_1$ and $u_2\in L_2$ such that $L_1=\Z v_1\oplus \Z u_1$ and $L_2=\Z v_2\oplus \Z u_2$.
For $s\in\Z$, let $w_s$ be defined as in \eqref{def: w_s}. Suppose that the triple $(w_s,v_1,v_2)$ satisfies the condition (\ref{eq:positively oriented}) in Lemma~\ref{lem:CM}. We define
\begin{align}
    L_1^s = \Z v_1 \oplus \Z (u_1+ s v_2),
\qquad
     L_2^s = \Z v_2 \oplus \Z (u_2+ s v_1).
\end{align}
The triple $(L_1^s,L_2^s,w_s)$ is called the splitting obtained
from $(L_1,L_2,w)$ by the $s$-fold Dehn twist determined by
$v_1$ and $v_2$.
\end{definition}

\section{Splittings of eigenforms}\label{sec:reduction}
In this section, we recall some results on splittings of eigenforms.
Every geometric splitting determines an algebraic splitting of the absolute homology. Applying \cite[Theorem 8.3]{McM} to this algebraic splitting gives the following characterization.
\begin{theorem}\cite{McM}\label{thm: eigenform locus}
A connected sum $(X, \omega) \cong\left(T_1, \omega_1\right)\underset{w}{\#}\left(T_2, \omega_2\right)$ lies in $\mathcal{E}_D$ if and only if there are integers satisfying $e^2+4 d=D$, and an isogeny $p: T_1 \rightarrow T_2$, such that:\\
(i) $\operatorname{deg}(p)=d \geq 1, \operatorname{gcd}(p, e)=1$\footnote{Here $\operatorname{gcd}(p, e)=\operatorname{gcd}\left(p_{11}, p_{12}, p_{21}, p_{22}, e\right)$, where $\left(p_{i j}\right) \in \mathrm{M}_2(\mathbb{Z})$ is a matrix for the map $p_*: H_1\left(T_1, \mathbb{Z}\right) \rightarrow H_1\left(T_2, \mathbb{Z}\right)$.}, and\\
(ii) $p^*\left(\omega_2\right)=\lambda \omega_1$, where $\lambda^2=e \lambda+d$.

In particular, $D$ determines $\operatorname{deg}(p)$ and $\lambda$ up to finitely many choices.
\end{theorem}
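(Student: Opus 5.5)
The plan is to read this statement as the specialization of \cite[Theorem 8.3]{McM} to a genus-two surface presented as a connected sum. First I translate the geometric splitting into homology. Write $(X,\omega)=(T_1,\omega_1)\#_w(T_2,\omega_2)$. The slit $w$ joins the two zeros and separates $X$ into two one-holed tori. This gives an orthogonal decomposition
\[
H_1(X,\Z)=H_1(T_1,\Z)\oplus H_1(T_2,\Z)
\]
with respect to the intersection form, and the period map restricts to the lattices $L_1,L_2$ on the two summands. So the data of the splitting is exactly an orthogonal splitting of the symplectic lattice $H_1(X,\Z)$ into two rank-two unimodular pieces, together with the restrictions $\omega_i$ of $\omega$.

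Next I bring in real multiplication. If $\operatorname{Jac}(X)$ has real multiplication by $\mathcal{O}_D$ with $\omega$ an eigenform, choose the generator $T=(e+\sqrt D)/2$ of $\mathcal{O}_D$ acting self-adjointly on $H_1(X,\Z)$. Write $T$ in block form relative to the splitting:
\[
T=\begin{pmatrix} a & b^\dagger\\ b & c\end{pmatrix},
\]
where $b:H_1(T_1)\to H_1(T_2)$ and $b^\dagger$ is its symplectic adjoint. Each diagonal block is a self-adjoint endomorphism of a rank-two symplectic lattice, hence a scalar. After subtracting an integer from $T$ we may take $a=0$, $c=e$. The relation $T^2=eT+d$ then forces $b^\dagger b=d$, so $\deg b=d$, and $e^2+4d=D$. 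The isogeny $p$ is the one induced by $b$. The eigenform condition $T^*\omega=\lambda\omega$, restricted to the first block, reads $p^*\omega_2=\lambda\omega_1$ with $\lambda^2=e\lambda+d$. Finally, $\gcd(p,e)=1$ is the condition that $T$ generates a proper order of discriminant $D$ and not a larger one, i.e.\ that the block matrix is not divisible by any integer $>1$ after translation.

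For the converse I run this backwards. Given $p$ and $e$ satisfying (i) and (ii), define $T$ by the block formula with $b=p_*$. It is self-adjoint and satisfies $T^2=eT+d$. The gcd condition makes $\Z[T]\cong\mathcal{O}_D$ act properly, and (ii) together with the same computation on the second block makes $\omega$ a $T$-eigenform. Since $T$ is self-adjoint and preserves $H^{1,0}$ (spanned by $\omega$ and its Galois conjugate), it defines real multiplication on $\operatorname{Jac}(X)$. The last sentence of the statement follows because $d=(D-e^2)/4$ with $d\ge1$ allows only finitely many $e$, and each $e$ gives two roots $\lambda$.

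The main obstacle is bookkeeping: checking that the geometric splitting really yields an orthogonal symplectic splitting, and handling normalizations (sign and orientation of $\lambda$, and the adjoint convention). This is where I would rely directly on \cite[Theorem 8.3]{McM}, stated there for algebraic splittings, rather than re-deriving it.
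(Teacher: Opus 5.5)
Your proposal is correct and follows the same route as the paper, which simply notes that the geometric splitting induces an orthogonal splitting $H_1(X,\Z)=H_1(T_1,\Z)\oplus H_1(T_2,\Z)$ and then invokes \cite[Theorem 8.3]{McM} for that algebraic splitting. Your block-matrix sketch is a faithful outline of why that theorem holds. The only point it glosses is $d\ge 1$, which follows because $b$ acts on periods as multiplication by the nonzero real number $\lambda$, so its determinant $\lambda^2\covol(L_1)/\covol(L_2)$ is positive.
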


Suppose $D$ is a nonsquare. We take $e=1$ if $D\equiv 1\mod 4$, and $e=0$ if $D\equiv0\mod 4$. Hence $d=\frac{D-e^2}{4}$ is determined. We fix $\lambda$ to be the positive root of the equation $x^2-ex-d=0$ with the above choices of $e$ and $d$.

We recall the following result from \cite[Proposition 7.1]{BSW}.
\begin{proposition} \label{prop:BSW}
    A genus 2 translation surface $M$ is an eigenform for real multiplication by $\mathcal{O}_D$ if and only if there is a proper monomorphism $\rho_0:\mathcal{O}_D\to\operatorname{End}^0(H_1(M;\Z))$ such that
    \begin{equation} \label{eq:eigenform criterion}
        \hol(M,\rho_0(\lambda)\cdot\gamma) = \iota(\lambda)\hol(M,\gamma)
    \end{equation}
    for each $\lambda\in\mathcal{O}_D$ and $\gamma\in H_1(M;\Z)$.
\end{proposition}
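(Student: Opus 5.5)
The plan is to translate both sides of the equivalence into statements about self-adjoint endomorphisms of $H_1(M;\Z)$. We use the standard description $\operatorname{Jac}(M)=(\Omega(M))^*/H_1(M;\Z)$, where $\Omega(M)$ is the $2$-dimensional space of holomorphic $1$-forms. Recall that an endomorphism of $\operatorname{Jac}(M)$ is the same as an endomorphism $T$ of the lattice $H_1(M;\Z)$ whose real extension is complex-linear for the complex structure induced from $\Omega(M)^*$. Dually, this means that $T^*$ acting on $H^1(M;\R)\otimes\mathbb{C}$ preserves $\Omega(M)$. Rosati-self-adjointness with respect to the principal polarization corresponds to self-adjointness of $T$ for the intersection form. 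We read $\operatorname{End}^0(H_1(M;\Z))$ as the ring of such intersection-self-adjoint endomorphisms.

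\emph{Forward direction.} Let $\rho:\mathcal{O}_D\to\operatorname{End}(\operatorname{Jac}(M))$ be a proper self-adjoint embedding with $\omega$ an eigenform, so $\rho(\lambda)^*\omega=\iota(\lambda)\omega$ for some real embedding $\iota$. Take $\rho_0=\rho$ viewed on $H_1(M;\Z)$; it lands in $\operatorname{End}^0$ and is proper. For every class $\gamma$ we get
\[
\hol(M,\rho_0(\lambda)\gamma)=\int_{\rho_0(\lambda)\gamma}\omega=\int_\gamma\rho_0(\lambda)^*\omega=\iota(\lambda)\hol(M,\gamma),
\]
which is exactly \eqref{eq:eigenform criterion}.

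\emph{Converse.} Suppose $\rho_0$ satisfies \eqref{eq:eigenform criterion}, and let $T=\rho_0(\lambda)$ for a generator $\lambda$ of $\mathcal{O}_D$ over $\Z$. Condition \eqref{eq:eigenform criterion} says precisely that $T^*\omega=\iota(\lambda)\omega$. Since $\iota(\lambda)$ is real, also $T^*\bar\omega=\iota(\lambda)\bar\omega$. Hence the real plane $S=\operatorname{span}(\operatorname{Re}\omega,\operatorname{Im}\omega)\subset H^1(M;\R)$ is $T^*$-invariant with eigenvalue $\iota(\lambda)$.

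\begin{enumerate}[label=(\arabic*)]
\item The plane $S$ is symplectic, since $\int\operatorname{Re}\omega\wedge\operatorname{Im}\omega=\tfrac12\int|\omega|^2>0$. Because $T^*$ is self-adjoint, the symplectic complement $S^\perp$ is also $T^*$-invariant, and $\dim S^\perp=2$ because the genus is $2$.
\item $T$ satisfies $x^2-ex-d=0$, so its characteristic polynomial on the $4$-dimensional space is rational with roots among $\iota(\lambda),\iota(\lambda')$. If $D$ is nonsquare, $T$ has irrational eigenvalues, and Galois-invariance forces each eigenvalue to occur with multiplicity $2$. Therefore $T^*$ acts on $S^\perp$ by the conjugate $\iota(\lambda')\neq\iota(\lambda)$.
\item Let $\eta$ span the complex line $\Omega(M)\cap(S^\perp\otimes\mathbb{C})$. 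This line is nonzero, because $\Omega(M)$ is the orthogonal complement of $\bar\omega$ for the Hodge pairing, and that orthogonal complement meets $S^\perp_{\mathbb{C}}$ in positive dimension. Then $\Omega(M)=\mathbb{C}\omega\oplus\mathbb{C}\eta$ is a sum of $T^*$-eigenlines, so $T^*$ preserves $\Omega(M)$.
\item It follows that $T$ descends to a self-adjoint endomorphism of $\operatorname{Jac}(M)$. This gives real multiplication by $\mathcal{O}_D$, which is proper because $\rho_0$ is, and $\omega$ is an eigenform.
\end{enumerate}

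The main obstacle is step (3): showing $T^*$ preserves the Hodge decomposition using only the single eigen-equation on $\omega$ plus self-adjointness. This is where genus $2$ is used, since it makes $S^\perp$ a single conjugate eigenspace. The square-discriminant case, where eigenvalues could coincide, is handled separately in \cite{BSW}; for our nonsquare $D$ the Galois argument in step (2) suffices.
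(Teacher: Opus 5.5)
The paper gives no proof of this statement; it quotes \cite[Proposition 7.1]{BSW}. Your strategy of splitting $H^1(M;\R)=S\oplus S^\perp$ and showing that $T^*$ preserves $\Omega(M)$ is the standard one. However, the step you yourself call the crux, step (3), is justified incorrectly.

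\textbf{The gap in step (3).} $\Omega(M)$ is \emph{not} the orthogonal complement of $\bar\omega$. For any nondegenerate pairing on the $4$-dimensional space $H^1(M;\mathbb{C})$ (the cup product, or the Hermitian form $\frac{i}{2}\int\alpha\wedge\bar\beta$), the complement of a single vector is $3$-dimensional. Only for that $3$-dimensional space does a dimension count force a nonzero intersection with the $2$-dimensional $S^\perp_{\mathbb{C}}$. Two $2$-planes in $\mathbb{C}^4$ need not meet, so as written you have not produced $\eta\neq 0$.

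\textbf{How to repair it.} The right argument uses types. For $\alpha\in\Omega(M)$, $\int\alpha\wedge\omega=0$ automatically, because $\alpha\wedge\omega$ is a $(2,0)$-form. Hence $\alpha\in S^\perp_{\mathbb{C}}$ if and only if $\int\alpha\wedge\bar\omega=0$. This is one linear condition on the $2$-dimensional space $\Omega(M)$, so $\Omega(M)\cap S^\perp_{\mathbb{C}}$ is a line. It does not contain $\omega$, since $\int\omega\wedge\bar\omega\neq0$. Equivalently: $S$ is invariant under the Hodge complex structure $J$, and $J$ preserves the cup product, so $S^\perp$ is $J$-invariant and $S^\perp_{\mathbb{C}}$ splits into a $(1,0)$-line and a $(0,1)$-line. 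With this repair, steps (3) and (4) go through.

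\textbf{Step (2) can avoid the nonsquare hypothesis.} On a $2$-dimensional symplectic space every endomorphism $A$ satisfies $\langle Ax,y\rangle+\langle x,Ay\rangle=\operatorname{tr}(A)\langle x,y\rangle$. So a self-adjoint $A$ is the scalar $\tfrac12\operatorname{tr}(A)$. Apply this to $T^*|_{S^\perp}$; the form is nondegenerate there because $S$ is symplectic. You get the scalar action directly, with no Galois or multiplicity argument, and therefore also for square $D$. This matters because the statement as quoted has no nonsquare hypothesis, although the paper only uses it under its standing assumption that $D$ is nonsquare. This is the usual McMullen-style argument: $T^*$ is scalar on the $J$-invariant summands $S$ and $S^\perp$, hence commutes with $J$.

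\textbf{A missing line in the forward direction.} Properness of $\rho_0$ inside the larger ring of all intersection-self-adjoint endomorphisms needs justification. Any extension of $\rho_0$ to an order $\mathcal{O}_{D'}\supsetneq\mathcal{O}_D$ sends each element to a $\mathbb{Q}$-linear combination of $1$ and $T$. Such combinations are holomorphic, so the extension would extend $\rho$ and contradict its properness.
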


\begin{figure}[htbp]
\centering
\begin{tikzpicture}[scale=0.8]
    \def\d{3}
    \draw[thick] (0,0) rectangle (\d,1);
    \foreach \x in {1,...,\d}
        \draw (\x,0) -- (\x,1);
    \node at (-0.5, 0.5) {1};
    \node at (\d/2, 1.5)  {d=3};
    \draw (0,0) -> (0.25,0.25);
    
    \def\mu{1.732}
    \draw[thick] (\d+3, 0) rectangle (\d+3+\mu, \mu);
    \node at (\d+3+\mu/2-1.3, \mu/2) {$\lambda=\sqrt{3}$};
    \draw (\d+3,0)  -> (\d+3+0.25, 0.25);
\end{tikzpicture}
\caption{A translation surface lies in $\mathcal{E}_{12}$.}
\end{figure}

Take the following two rectangles: one with width $d$ and height $1$, the other with width and height both equal to $\lambda$. Glue the opposite sides to get two flat tori, and glue them along a slit $(x,y)$ with $0<\norm{(x,y)}_\infty<\min\{1,\lambda\}$. The resulting flat surface is denoted by $M_{D,x,y}$. Then Proposition~\ref{prop:BSW} implies that $M_{D,x,y}$ lies in $\mathcal{E}_D$. Indeed, there exists a symplectic basis $\{ \alpha_1,\beta_1,\alpha_2,\beta_2 \}$ for $H_1(M;\Z)$ such that $\alpha_1\cdot\beta_1=1, \alpha_2\cdot\beta_2=1$ and $\alpha_i\cdot\alpha_j=\beta_i\cdot\beta_j=0$ for every $1\leq i,j\leq2$; and we have $\hol(\alpha_1)=d\bfe_1$, $\hol(\beta_1)=\bfe_2$, $\hol(\alpha_2)=\lambda\bfe_1$, $\hol(\beta_2)=\lambda\bfe_2$. Let $\rho_0$ be defined by $\rho_0(\lambda)\cdot\alpha_1 = d\alpha_2$, $\rho_0(\lambda)\cdot\alpha_2 = e\alpha_2+\alpha_1$, $\rho_0(\lambda)\cdot\beta_1 = \beta_2$, $\rho_0(\lambda)\cdot\beta_2=e\beta_2+d\beta_1$. One can check that \eqref{eq:eigenform criterion} is satisfied and $\rho_0$ is proper, and thus $M_{D,x,y}$ lies in $\mathcal{E}_D$.

\begin{figure}[htb!]
\centering
\begin{tikzpicture}[scale=0.8, line width=0.8pt]


\draw (0,0) rectangle (4,3);

\foreach \x in {1,2,3} {

    \draw (\x,0) -- (\x,3);

}

\foreach \y in {1,2} {

    \draw (0,\y) -- (4,\y);

}


\draw (6,0) rectangle ++(3.464,3.464);


\node at (2,-0.4) {$d'$};

\node at (-0.35,1.5) {$d''$};

\node at (7.732,-0.4) {$\lambda$};

\node at (5.55,1.732) {$\lambda$};

\end{tikzpicture}
\caption{$(d'd'',\lambda)-type$}
\end{figure}

\begin{definition}[$(d,\lambda)$-type]
Let $(L_1,L_2,w)$ be a splitting of $(X,\omega)$.
We say that this splitting is of type $(d,\lambda)$ if the following two conditions hold.
\begin{itemize}
\item[(i)]There exists a factorization $d=d'd''$ 
such that
$$
L_1=\mathbb Z v_1\oplus \mathbb Z u_1,\qquad
L_2=\mathbb Z v_2\oplus \mathbb Z u_2,
$$
and
$$
\lambda v_1 = d'v_2,\qquad
{\lambda}u_1= d'' u_2+ c_v v_2
$$
for some integer $c_v$. 
\item[(ii)] Let 
\begin{align*}
    A=
\begin{pmatrix}
d'&c_v\\
0&d''
\end{pmatrix},
\end{align*}
then $\gcd(A,e)=\gcd(d',c_v,d'',e)=1$.
\end{itemize}
\end{definition}

\begin{lemma}\label{lem:v_2}
Let $(X,\omega)\in\mathcal E_D$ and $(L_1,L_2,w)$ be a splitting of type $(d,\lambda)$. For any primitive vector $v \in L_1$, there exists a divisor $\hat d(v)$ of $d$ such that $\frac{\lambda}{\hat{d}}v$ is primitive in $L_2$. Moreover, the matrix $A$ of the lattice inclusion from $L_1$ to $L_2$ in the basis $(v,u)$ and $(\frac{\lambda}{\hat{d}}v,u_2)$
satisfies $\gcd(A,e)=1$.
\end{lemma}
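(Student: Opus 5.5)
We work entirely with the linear map $\phi:=\lambda\cdot\colon\R^2\to\R^2$ restricted to $L_1$. By the type $(d,\lambda)$ hypothesis, $\phi(L_1)\subseteq L_2$. In the bases $(v_1,u_1)$ of $L_1$ and $(v_2,u_2)$ of $L_2$, this map has the upper triangular integer matrix $A_0=\begin{pmatrix} d'&c_v\\0&d''\end{pmatrix}$. Its determinant is $d'd''=d$, so $[L_2:\lambda L_1]=d$. The plan has two parts: first find $\hat d(v)$, then get the $\gcd$ condition from a change of basis.

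\emph{Choosing $\hat d(v)$.} Let $v\in L_1$ be primitive. Then $\lambda v\in L_2$, and we let $\hat d$ be the largest positive integer with $\lambda v\in \hat d L_2$, i.e.\ the content (gcd of coordinates) of $A_0\,[v]$, where $[v]$ is the coordinate vector of $v$ in $(v_1,u_1)$. By construction $\frac{\lambda}{\hat d}v$ is primitive in $L_2$. To see that $\hat d\mid d$, use the adjugate. We have $\operatorname{adj}(A_0)A_0=d\cdot I$, and $A_0[v]=\hat d\,y$ with $y$ integral. Hence $d[v]=\hat d\,\operatorname{adj}(A_0)y$, so every coordinate of $d[v]$ is divisible by $\hat d$. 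Since $[v]$ is primitive, its coordinates have gcd $1$, which forces $\hat d\mid d$.

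\emph{The matrix $A$ and the gcd condition.} Complete $v$ to a basis $(v,u)$ of $L_1$, and complete $v_2':=\frac{\lambda}{\hat d}v$ to a basis $(v_2',u_2')$ of $L_2$. Then $A=Q^{-1}A_0P$ with $P,Q\in\GL_2(\Z)$, and its first column is $(\hat d,0)^T$ because $\lambda v=\hat d v_2'$. So $A=\begin{pmatrix}\hat d& c\\0& d/\hat d\end{pmatrix}$ after adjusting signs, which has the same shape as in the definition of type $(d,\lambda)$. The quantity $\gcd(A,e)$ is the gcd of the entries of $A$ together with $e$. This equals $\gcd(g,e)$, where $g$ is the gcd of the entries, i.e.\ the first elementary divisor. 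Since $g$ is invariant under left and right multiplication by $\GL_2(\Z)$, we get $\gcd(A,e)=\gcd(A_0,e)=1$ by condition (ii). If the paper instead intends the second basis vector to be the original $u_2$, the same invariance argument applies, provided $(\frac{\lambda}{\hat d}v,u_2)$ is a basis. Otherwise one works with an arbitrary completion and notes that only the entry gcd matters.

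\emph{Main obstacle.} The substantive point is the divisibility $\hat d\mid d$ together with primitivity, and the adjugate argument above handles it. The remaining subtlety is bookkeeping about which basis of $L_2$ is meant. The key fact there is that the gcd of the matrix entries is a $\GL_2(\Z)$-double-coset invariant, so condition (ii) transfers to the new basis without further work.
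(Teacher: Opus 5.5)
Your proof is correct and follows essentially the paper's route: you take $\hat d$ to be the divisibility of $\lambda v$ in $L_2$, show $\hat d\mid d$, and transfer $\gcd(\cdot,e)=1$ through a unimodular change of bases, where the gcd of the entries is a $\GL_2(\Z)$ double-coset invariant (a fact the paper uses without stating it). The differences are cosmetic. You prove $\hat d\mid d$ with the adjugate, whereas the paper reads it off from $\det A_v=\hat d\,m_v=d$ in the triangular form (which you also note), and you draw $\lambda L_1\subset L_2$ and $\gcd(A_0,e)=1$ directly from the type-$(d,\lambda)$ hypothesis rather than from McMullen's isogeny theorem.
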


\begin{proof}
By Theorem~\ref{thm: eigenform locus}, $L_1$ and $L_2$ are isogenous. Suppose that $v$ is the holonomy of $\alpha$, we have
\begin{equation}
    \lambda v =\lambda \int_{\alpha} \omega_1 = \int_\alpha p^*(\omega_2) =\int_{p_* \alpha} \omega_2.
\end{equation}
Then $\lambda v$ is the holonomy vector of a closed curve $p_*\alpha \in H_1(T_2,\Z)$, which implies $\lambda v\in L_2$.

Define the divisibility of $\lambda v$ in $L_2$ by
\begin{align*}
\widehat{d}
=
\max\{k\ge1:\lambda v\in kL_2\}.
\end{align*}
Then
$v_2=\frac{\lambda v}{\widehat{d}}$
is primitive in $L_2$ by definition.
Since $v$ is primitive, choose $u\in L_1$ such that
$L_1=\mathbb Zv\oplus\mathbb Zu.$
Since $v_2$ is primitive, choose $u_2\in L_2$ such that
$L_2=\mathbb Zv_2\oplus\mathbb Zu_2$.
Choose both bases compatibly with the orientations of the lattices.
Because $\lambda u\in L_2$, there are integers $c_v,m_v$ such that
$\lambda u=c_vv_2+m_vu_2$.
Therefore the matrix of the lattice inclusion
$p:L_1\longrightarrow L_2$
in the bases $(v,u)$ and $(v_2,u_2)$ is
\begin{align*}
A_v=
\begin{pmatrix}
\widehat d&c_v\\
0&m_v
\end{pmatrix}.
\end{align*}
Its determinant is the index of $\lambda L_1 \subset L_2$. Hence
$d=[L_2:\lambda L_1]
=\det A_v
=\widehat d\,m_v.$
It follows that
\begin{align*}
\widehat d\mid d,
\qquad
m_v=\frac d{\widehat d}.
\end{align*}

Finally, let $A_0$ be a matrix for the original isogeny with respect to any integral bases. Theorem~\ref{thm: eigenform locus} implies that
$\gcd(A_0,e)=1$.
The matrix $A_v$ is obtained from $A_0$ by unimodular changes of bases:
$A_v=Q^{-1}A_0P$ where
$P,Q\in GL_2(\mathbb Z)$. Therefore we have
$\gcd(A_v,e)
=
\gcd(A_0,e)=1.$
So we obtain that
$\gcd\!\left(
\widehat d,c_v,\frac d{\widehat d},e
\right)=1.$
\end{proof}

\begin{notation}
Let $\covol(\Lambda)$ denote the volume of $\R^2/\Lambda$ with respect to the standard Lebesgue measure. 
\end{notation}

\begin{lemma}\label{lem:new splitting}
Let $(X,\omega)\in\mathcal E_D$ and $(L_1,L_2,w)$ be a splitting of type $(d,\lambda)$. Let $v_1\in L_1$ be a primitive vector with 
$0<{w\times v_1}<\min\big(\operatorname{Area}(T_1), \frac{1}{\lambda} \operatorname{Area}(T_2)\big)$.
Then there exists a primitive vector $v_2\in L_2$ such that the splitting obtained by shearing along $v_1$ and $v_2$,
$(L_1^s,L_2^s,w_s)$,
is again a splitting of type $(d,\lambda)$ for every shear parameter s for which the Dehn twist operation is defined. (See Definition~\ref{def:(L_1^s,L_2^s)}.) Moreover, the Dehn twist operation does not change the covolume; namely, $\covol L_1^s=\covol L_1$ and $\covol L_2^s =\covol L_2$.
\end{lemma}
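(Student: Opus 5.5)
The plan is to take $v_2=\frac{\lambda}{\hat d}v_1$, the primitive vector of $L_2$ supplied by Lemma~\ref{lem:v_2}, and then check three things in turn. First, that $(v_1,v_2)$ satisfies \eqref{eq:orientation} and \eqref{eq:cylinder}. Second, that the twisted lattices $L_1^s,L_2^s$ of Definition~\ref{def:(L_1^s,L_2^s)} again satisfy conditions (i) and (ii) of type $(d,\lambda)$. Third, that the covolumes are unchanged.

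\emph{Step 1 (admissibility).} Since $\lambda>0$ and $v_2$ is a positive multiple of $v_1$, the products $v_1\times w$ and $v_2\times w$ have the same sign, so \eqref{eq:orientation} holds. For \eqref{eq:cylinder} we have $\abs{v_1\times w}<\operatorname{Area}(T_1)$ by hypothesis. Also
\[
\abs{v_2\times w}=\tfrac{\lambda}{\hat d}\abs{v_1\times w}\le\lambda\abs{v_1\times w}<\operatorname{Area}(T_2),
\]
using $\hat d\ge 1$ and the hypothesis $w\times v_1<\frac1\lambda\operatorname{Area}(T_2)$. So the Dehn twist along $\gamma$ with holonomy $v_1+v_2$ is defined whenever \eqref{eq:positively oriented} holds for $w_s$.

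\emph{Step 2 (type is preserved).} Lemma~\ref{lem:v_2} gives bases $L_1=\Z v_1\oplus\Z u_1$ and $L_2=\Z v_2\oplus\Z u_2$ with
\[
\lambda v_1=\hat d\, v_2,\qquad \lambda u_1=c\,v_2+m\,u_2,\qquad m=d/\hat d,\qquad \gcd(\hat d,c,m,e)=1.
\]
In $L_1^s=\Z v_1\oplus\Z(u_1+sv_2)$ and $L_2^s=\Z v_2\oplus\Z(u_2+sv_1)$ I compute
\[
\lambda(u_1+sv_2)=c v_2+m u_2+s\lambda v_2 .
\]
Here $\lambda v_2=\frac{\lambda^2}{\hat d}v_1=\frac{e\lambda+d}{\hat d}v_1$, and this is where the difficulty lies: the twisted vector $u_1+sv_2$ mixes the two lattices, so I must check that the new $\lambda$-image really lies in $L_2^s$ with an upper-triangular matrix.

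I would express everything in the basis $(v_2,\,u_2+sv_1)$ of $L_2^s$. Using $v_1=\frac{\hat d}{\lambda}v_2$ and $\lambda^2=e\lambda+d$, the coefficient of $v_2$ becomes $c+se\hat d$ plus a term $s\frac{d}{\hat d}\cdot\frac{\hat d}{\lambda}\cdots$. The cleanest route is to write
\[
m u_2=m(u_2+sv_1)-ms v_1,\qquad s\lambda v_2=s\frac{e\lambda+d}{\hat d}v_1,
\]
so the stray $v_1$-terms combine to
\[
\Bigl(-ms+\frac{s(e\lambda+d)}{\hat d}\Bigr)v_1=\frac{se\lambda}{\hat d}\,v_1=se\,v_2 .
\]
This gives
\[
\lambda(u_1+sv_2)=(c+se)\,v_2+m\,(u_2+sv_1),
\]
and $\lambda v_1=\hat d v_2$ is unchanged. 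Hence the new matrix is
\[
\begin{pmatrix}\hat d& c+se\\0&m\end{pmatrix},
\]
with factorization $d=\hat d\cdot m$. Moreover $\gcd(\hat d,c+se,m,e)=\gcd(\hat d,c,m,e)=1$, because $c+se\equiv c \pmod e$. So condition (ii) holds. I would double-check the sign conventions in \eqref{def: w_s} and the orientation of the bases, since these could flip $s$ to $-s$; doing so does not affect the conclusion.

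\emph{Step 3 (covolume).} The change of basis from $(v_1,u_1)$ to $(v_1,u_1+sv_2)$ leaves $v_1\times(u_1+sv_2)=v_1\times u_1+s\,v_1\times v_2=v_1\times u_1$, since $v_1\parallel v_2$. The same computation with $v_2\times(u_2+sv_1)$ gives the result for $L_2$. So both covolumes are preserved, and the argument holds for every $s$ for which the twist is defined.
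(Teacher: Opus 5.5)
Your proposal is correct and matches the paper's proof. You make the same choice $v_2=\frac{\lambda}{\hat d}v_1$ from Lemma~\ref{lem:v_2}, derive the same key identity $\lambda(u_1+sv_2)=(c+se)v_2+\frac{d}{\hat d}(u_2+sv_1)$ from $\lambda^2=e\lambda+d$, and use the same invariance $\gcd(\hat d,c+se,\frac{d}{\hat d},e)=\gcd(\hat d,c,\frac{d}{\hat d},e)$. You also spell out two points the paper only asserts: the cylinder condition for $v_2$ (using the $\frac{1}{\lambda}\operatorname{Area}(T_2)$ hypothesis), and covolume preservation via $v_1\parallel v_2$.
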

\begin{proof}
Suppose that $L_1=\Z v_1 \oplus \Z u_1$.
Then for $v_1\in L_1$, we can choose $v_2=\frac{\lambda}{\hat{d}}v_1$ by Lemma~\ref{lem:v_2}. 
Then $(v_1,v_2)$ satisfies the conditions (\ref{eq:orientation}) and (\ref{eq:cylinder}). Since $v_2$ is a positive scalar multiple of $v_1$, we have
$(w_k\times v_1)(w_k\times v_2)$ is positive for every integer $k$. Following Lemma~\ref{lem:CM}, we are free to choose the shear parameter $s$. Then $(L_1^s,L_2^s,w_s)$ is a splitting of $(X,\omega)$ for $w_s=w+sv_1+sv_2$ is the holonomy vector of a splitting saddle connection.  
By the type of splitting $(L_1,L_2,w)$,
we have $\lambda u_1=c_v\,v_2+\frac d{\widehat d}\,u_2$ for some $c_v\in \Z$.
After the Dehn twist operation we have
$$
L_1^s=\mathbb Z v_1\oplus
\mathbb Z\left(u_1+s\,v_2\right),\qquad L_2^s=\mathbb Z \frac{\lambda}{\hat d}v_1\oplus
\mathbb Z\left(u_2+s\,v_1\right).
$$
Using $\lambda^2=e\lambda+d$,
\begin{align*}
\lambda v_2
=
\frac{\lambda^2}{\widehat d}v_1
=
e v_2+\frac d{\widehat d}\,v_1.
\end{align*}
Therefore 
\begin{align*}
\lambda(u_1+s v_2)
&=
c_v\,v_2+\frac d{\widehat d}\,u_2
+s\left(e v_2+\frac d{\widehat d}\,v_1\right)\\
&=
(c_v+se)v_2+\frac d{\widehat d}(u_2+s v_1).
\end{align*}
So we have $\lambda L_1^s \subset L_2^s$. Also by Lemma~\ref{lem:v_2}, we can deduce that $\gcd\!\left(
\widehat d,c_v+se,\frac d{\widehat d},e
\right)
=
\gcd\!\left(
\widehat d,c_v,\frac d{\widehat d},e
\right)=1.$
By calculating the covolume of $L_1^s$ and $L_2^s$, we have $\covol(L_1^s)=\covol(L_1)=\frac{d}{\lambda^2}\covol(L_2)=\frac{d}{\lambda^2}\covol(L_2^s)$. Then one can deduce that $$[L_2^s:\lambda L_1^s]=d.$$
So by definition the splitting $(L_1^s,L_2^s,w_s)$ is of type $(d,\lambda)$.
\end{proof}

Let $(X,\omega)\in\mathcal E_D$ and $(L_1,L_2,w)$ be a splitting of type $(d,\lambda)$. We choose $u,v\in L_1$ such that
$$\Delta:=\covol(L_1)=v\times u, \qquad
L_1=\Z v \oplus \Z u.$$
Replace $u$ by $u+mv$ so that the component of $u$ parallel to $v$ has absolute value at most $\frac{\norm{v}}{2}$. Then
\begin{align}\label{def:u}
\norm{u}\le \frac{\norm{v}}{2}+ \frac{\Delta}{\norm{v}}.    
\end{align}
For each $v$, by Lemma~\ref{lem:v_2},
let \(\widehat d(v)\mid d\) be chosen
so that $\frac{\lambda}{\widehat d(v)}v$ is primitive in \(L_2\), and put
\[
a(v):=\frac{\lambda}{\widehat d(v)},
\qquad
\eta(v):=1+a(v).
\]
For \(s\in\{1,2,3\}\), define
\begin{align}\label{eq:child-slit}
w_s:=w+s\eta(v)v
\end{align}
and
\begin{align}\label{ali:L_1'}
L_1^s:=\mathbb Zv\oplus
\mathbb Z\bigl(u+sa(v)v\bigr).
\end{align}

In this case, we say that $w_s$ and $L_i^s$ are obtained by applying a Dehn twist in the direction of $v$ with shear parameter $s$.

\subsection{Decagon curve in $\mathcal{H}(1,1)$}
The following theorem describes the only primitive Teichm\"uller curve generated by a form with simple zeros.

\begin{theorem}\cite[Theorem 1.1]{McM2}\label{thm:decagon}
The decagon form $\omega=d x / y$ on the curve $y^2=x\left(x^5-1\right)$ generates the only primitive Teichm\"uller curve $f: V \rightarrow \mathcal{M}_2$ coming from a form with simple zeros.
\end{theorem}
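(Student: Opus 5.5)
This is quoted from \cite[Theorem 1.1]{McM2} and is used only as a black box. If one wanted to reprove it, I would follow McMullen's strategy. The idea is to turn the Teichm\"uller curve condition into an arithmetic condition on the periods, and then solve the resulting trigonometric Diophantine equation.

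\emph{Step 1: reduce to eigenforms.} Let $(X,\omega)\in\mathcal H(1,1)$ generate a primitive Teichm\"uller curve. Its Veech group is a lattice, so the trace field has degree at most $g=2$ over $\mathbb Q$.
\begin{itemize}
\item If the trace field is $\mathbb Q$, the surface is square-tiled, so the curve is not primitive.
\item Otherwise the trace field is real quadratic, and McMullen's criterion shows $\operatorname{Jac}(X)$ has real multiplication by some $\mathcal O_D$, with $\omega$ an eigenform and $D$ nonsquare.
\end{itemize}
So we may assume $(X,\omega)\in\mathcal E_D$ with $D$ nonsquare.

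\emph{Step 2: add the torsion condition.} By M\"oller's theorem, the two zeros $P,Q$ of $\omega$ satisfy that $P-Q$ is torsion in $\operatorname{Jac}(X)$. Equivalently, the relative period $\int_P^Q\omega$ is a rational combination of absolute periods, compatibly with the real multiplication.

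\emph{Step 3: pass to a cylinder decomposition.} The Veech dichotomy lets us take a periodic direction and a splitting as in Section~\ref{sec:reduction}. Theorem~\ref{thm: eigenform locus} and Lemma~\ref{lem:v_2} give a normal form with lattices $L_1$ and $L_2=\lambda$-isogenous of degree $d$, and a slit $w$. The torsion condition then says that $w$ lies in $\mathbb Q L_1+\mathbb Q L_2$. Combined with the Galois conjugation $\lambda\mapsto\lambda'$, this pins $w$ down to finitely many prototypes for each $D$. In parallel, the lattice property forces the moduli of the cylinders in every periodic direction to be commensurable.

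\emph{Step 4: write down the trigonometric equation.} Take two periodic directions, for instance one fixed by a parabolic element and its image under a hyperbolic element of the Veech group. The commensurability relations, together with the eigenform relation $\lambda^2=e\lambda+d$, become identities among ratios of sines of rational multiples of $\pi$. These come from the angles of the triangles cut out by saddle connections in the two directions.

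\emph{Step 5 (main obstacle): solve the equation.} The hard part is this trigonometric Diophantine problem. I would first rewrite each identity as a vanishing sum of roots of unity of bounded length. Then I would apply the Conway--Jones classification of such sums, which leaves finitely many solutions. The last task is to check each solution by hand. Those that survive should be exactly the angles of the regular decagon, so $D=5$ and $\omega=dx/y$ on $y^2=x(x^5-1)$. The remaining solutions should either give square-tiled (non-primitive) surfaces or violate the torsion condition of Step 2. Keeping this case analysis finite and complete is the delicate point.
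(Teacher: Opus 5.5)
Citing \cite[Theorem 1.1]{McM2} and using it as a black box is exactly what the paper does; it gives no proof of its own, so nothing more is needed here. Your optional reproof sketch is not an argument, though. Torsion of $P-Q$ alone gives only $w\in\frac1N(L_1+L_2)$ with $N$ unbounded, so it does not confine $w$ to finitely many prototypes for fixed $D$, and Step~4 offers no reason why the angles of your saddle-connection triangles should be rational multiples of $\pi$ (for a general lattice surface they are not), which is the step that would need genuine work.
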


We will use the following proposition due to Bainbridge, Smillie and Weiss.
\begin{proposition}\cite[Proposition 7.2]{BSW}\label{prop:open set}
For any $M \in \mathcal{E}_D^{(1)}(1,1)$ there is a neighborhood $\mathcal{U}$ of the identity in $L$ and a neighborhood $\mathcal{U}^{\prime}$ of $M$ in $\mathcal{E}_D^{(1)}(1,1)$ such that the map $p: \mathcal{U} \rightarrow \mathcal{U}^{\prime}$ defined by
$$
p(g, v)= \operatorname{Rel}_v(gM)
$$
is the restriction of an affine homeomorphism to $\mathcal{U}$.
\end{proposition}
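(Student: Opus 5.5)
The plan is to work entirely in period coordinates, where both $\mathcal{E}_D(1,1)$ and the two operations in the map $p$ become linear. Let $\Sigma$ be the two zeros of $\omega$ and consider the period map $\hol\colon H_1(M,\Sigma;\Z)\to\R^2$. Its restriction to absolute homology is $\hol|_{H_1(M;\Z)}$, and its kernel direction is the rel direction $\ker\bigl(H^1(M,\Sigma;\R^2)\to H^1(M;\R^2)\bigr)$; since $|\Sigma|=2$, this kernel is a copy of $\R^2$. Near $M$, $\hol$ is a local chart of $\mathcal{H}(1,1)$ with values in $H^1(M,\Sigma;\R^2)\cong\R^{10}$. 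Here $L$ is the group generated by $\GL_2^+(\R)$ (restricted to area one) and the rel translations $v\in\R^2$. The first step is to describe $\mathcal{E}_D$ in this chart. Proposition~\ref{prop:BSW} says $M'$ near $M$ is an eigenform exactly when the absolute periods satisfy \eqref{eq:eigenform criterion} for the fixed $\rho_0$. Indeed $\rho_0$ is locally constant, being discrete data transported by the Gauss--Manin connection. Writing $S\subset H^1(M;\R)$ for the $2$-dimensional eigenspace of $\rho_0(\lambda)^*$ with eigenvalue $\iota(\lambda)$, the condition reads $\hol|_{H_1(M)}\in S\otimes\R^2$. Hence locally
\[
\mathcal{E}_D(1,1)=\{\xi\in H^1(M,\Sigma;\R^2): \operatorname{res}(\xi)\in S\otimes\R^2\},
\]
which is a linear subspace of real dimension $4+2=6$. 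Imposing area one cuts out a smooth hypersurface of dimension $5$.

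Next I compute $p$ in these coordinates. The action of $g\in\GL_2^+(\R)$ is $\xi\mapsto g\circ\xi$, which is linear on $H^1(M,\Sigma;\R^2)=H^1(M,\Sigma;\R)\otimes\R^2$ and preserves the subspace above. The rel deformation $\operatorname{Rel}_v$ is, where defined, $\xi\mapsto\xi+v\,\delta$. Here $\delta$ is the fixed class in $H^1(M,\Sigma;\R)$ that vanishes on absolute cycles and takes value $1$ on a path from the first zero to the second. This map also preserves the subspace, because it does not change absolute periods. Thus
\[
\hol(p(g,v))=g\cdot\hol(M)+v\,\delta ,
\]
which is affine in $(g,v)$ in the natural linear coordinates on $L$ viewed as an open subset of $\mathrm{M}_2(\R)\times\R^2$. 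This is the claimed affine structure.

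It remains to show that $p$ is a homeomorphism onto a neighbourhood, and that $\operatorname{Rel}_v(gM)$ is defined for $(g,v)$ near $(\mathrm{id},0)$. For definedness, I use that rel can only fail when a saddle connection joining the two distinct zeros degenerates. Near $M$ all such saddle connections have length bounded below, and their holonomies move continuously, so small $v$ causes no collapse. Injectivity follows from the formula. The map $g\mapsto g\cdot\hol(M)|_{S}$ is injective, because $\operatorname{Re}\omega$ and $\operatorname{Im}\omega$ restrict to a basis of $S^*$, so $\hol(M)|_S\colon S^*\to\R^2$ is an isomorphism. Moreover, the $v\,\delta$ term lives in the complementary rel subspace. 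So the linear part of $p$ is injective from a $6$-dimensional space into the $6$-dimensional subspace, or $5$ into $5$ after normalizing area, and is therefore a linear isomorphism. Invariance of domain, or directly the affine formula, then gives that $p$ restricted to a small $\mathcal{U}$ is a homeomorphism onto an open set $\mathcal{U}'$.

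I expect the main obstacle to be the careful justification that $\operatorname{Rel}_v$ is defined on a uniform neighbourhood and agrees with the linear formula; the linear algebra itself is straightforward. My first approach to this would be the standard cut-and-reglue description of rel along the saddle connections emanating from the zeros. With it, I would check that for $\|v\|$ smaller than half the shortest saddle connection length on surfaces in a compact neighbourhood of $M$, the construction is continuous and realizes the period change $v\,\delta$.
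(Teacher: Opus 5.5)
The paper gives no proof of this statement; it quotes it from \cite{BSW} and uses it as a black box. Your period-coordinate argument is the standard proof, and it is correct in substance. The linear map $\Phi(A,v)=A\cdot\hol(M)+v\,\delta$ sends $\mathrm{M}_2(\R)\times\R^2$ into the six-dimensional subspace $V=\{\xi:\operatorname{res}(\xi)\in S\otimes\R^2\}$. It is injective, because the absolute periods of $\omega$ span $\R^2$ and $\delta$ vanishes on absolute cycles, so it is an isomorphism onto $V$. The area of $\Phi(A,v)$ is $\det A\cdot\operatorname{area}(M)$, so $\Phi$ carries $\mathrm{SL}_2(\R)\times\R^2$ onto the area-one part of $V$. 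That is exactly what ``restriction of an affine homeomorphism'' means here. Note that $L$ is a hypersurface in $\mathrm{M}_2(\R)\times\R^2$, not an open subset as you wrote at one point, although your ``$5$ into $5$'' remark shows you have the right picture.

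Two steps need tightening.

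First, what you call the main obstacle is not one. $\operatorname{Rel}_v$ is defined as motion along the segment $t\mapsto\xi+tv\,\delta$ in marked period coordinates, and $gM$ has coordinates $g\cdot\hol(M)$. Take a convex ball $B$ about $\hol(M)$ inside the image of the period chart at $M$. For every $(g,v)$ with $g\cdot\hol(M)$ and $g\cdot\hol(M)+v\,\delta$ in $B$, the whole segment stays in $B$. Hence $\operatorname{Rel}_v(gM)$ is defined and its coordinates are given by your formula. No cut-and-reglue construction or saddle-connection estimate is needed; the saddle-connection criterion describes the global domain of $\operatorname{Rel}$, which is irrelevant locally.

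Second, the paper uses this proposition to get open subsets of $\mathcal E_D$. For $\mathcal U'$ to be a neighbourhood of $M$ in $\mathcal E^{(1)}_D(1,1)$, you need every eigenform near $M$ to lie in $V$, not just $V$ to consist of eigenforms. Your phrase ``$\rho_0$ is locally constant'' hides this argument. Suppose $M'$ is an eigenform for a real multiplication $\rho'$. Self-adjointness with respect to the intersection form forces $\rho'(\sqrt D)$ to act by $\iota(\sqrt D)$ on $S_{M'}=\operatorname{span}(\operatorname{Re}\omega',\operatorname{Im}\omega')$ and by $-\iota(\sqrt D)$ on its symplectic complement. So $\rho'$ is determined by $S_{M'}$ and varies continuously with $M'$. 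Being integral, it equals $\rho_0$ for $M'$ close to $M$.

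With these two steps written out, the proof is complete.
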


\begin{remark} \label{rmk:reduction to short slit}
By \Cref{thm:decagon} of McMullen, the only primitive Teichm\"uller curve in $\mathcal{H}(1,1)$ is generated by the regular decagon and it lies in $\mathcal{E}_5$. On the other hand, if $D$ is a nonsquare, then $\mathcal{E}_D$ does not contain non-primitive Teichm\"uller curves.
Hence for any $X\in\mathcal{E}_D$ where $D$ is a nonsquare, unless $D=5$ and $X$ lies on the $\GL_2^+(\R)$-orbit through the regular decagon, the orbit $\GL_2^+(\R)\cdot X$ is dense in $\mathcal{E}_D$ by the theorem of McMullen \cite{McM}; see also later generalization by Eskin-Mirzakhani-Mohammadi \cite{EMM}. By Proposition~\ref{prop:open set}, we have $\GL_2^+(\R)\cdot \{ M_{D,x,y} : \norm{(x,y)}_\infty < \varepsilon \}$ contains an open subset of $\mathcal{E}_D$. So there exists $M_{D,x,y}\in\GL_2^+(\R)\cdot X$ such that $0<\norm{(x,y)}_\infty<\varepsilon$.
\end{remark}

\section{Tree structure of splittings}\label{sec:Tree}
In this section, our objective is to construct a tree structure $\mathcal{T}(r)$ whose nodes are splittings of $(X,\omega) \in \mathcal{E}_D$ where $D$ is nonsquare. We require that any sequence of splittings $\left\{(L_1^n,L_2^n,w^n)\right\}_n$ formed by a path starting from the root satisfies the conditions of Theorem~\ref{thm:nonergodic direction}. Also, in this section we assume that these splittings are all of type $(d,\lambda)$. $d$ and $\lambda$ are given by Theorem~\ref{thm: eigenform locus}.

\begin{theorem}\label{thm:nonergodic direction}
Let $(L_1^{(n)}, L_2^{(n)}, w^{(n)})$  be a sequence of splittings obtained via the Dehn twist construction with shear parameter $s\le 3$. Assume that the directions of the vectors $w^n$ converge to some direction $\theta$. Let $h_n>0$ be the component of $w^{(n)}$ in the direction perpendicular to $\theta$ and $C_i^{(n)}\subset T_i^{(n)}$ be the cylinder contained in each torus $T_i^{(n)}$.
If
\begin{enumerate}
\item $\sum_{n=1}^{\infty} 
\norm{w^{(n)}\times v_1^{(n)}}<\infty$.
\item There exists $c>0$ such that $\operatorname{Area}\left(C_1^{(n)}\right)>c$, $ \operatorname{Area}\left(C_2^{(n)}\right)>c$ for all $n$.
\item $\lim _{n \rightarrow \infty} h_n=0$.
\end{enumerate}
Then $\theta$ is a nonergodic direction.
\end{theorem}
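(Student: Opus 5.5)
We reduce the statement to the Masur–Smillie criterion, Theorem~\ref{thm:MS}, by building the cylinders $E_n$ and the partitions $P_n$ out of the splittings. For each $n$ the splitting $(L_1^{(n)},L_2^{(n)},w^{(n)})$ writes $X$ as $T_1^{(n)}\#_{w^{(n)}}T_2^{(n)}$. Let $\gamma^{(n)}$ be the simple closed curve of holonomy $v_1^{(n)}+v_2^{(n)}$ that crosses the slit. Removing from $X$ the two cylinders $C_1^{(n)},C_2^{(n)}$ leaves an annulus with core curve $\gamma^{(n)}$; this annulus is $R_1^{(n)}\cup R_2^{(n)}$. We take
\[
A_n := T_1^{(n)}\setminus w^{(n)},
\]
the piece of $X$ on one side of the slit pair $\alpha_\pm^{(n)}$. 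Its boundary consists of the two copies of the slit, so the sum of the components of the boundary curves transverse to $\theta$ is $2h_n$. Condition (3) of the statement then gives condition (i) of Theorem~\ref{thm:MS} after rotating $\theta$ to vertical.

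For condition (ii), Lemma~\ref{lem:new splitting} shows that Dehn twists preserve covolumes. Hence $\mu(A_n)=\covol L_1^{(n)}/\operatorname{Area}(X)$ is a constant strictly between $0$ and $1$, namely $\frac{d}{d+\lambda^2}$ up to normalization, since $\covol L_1=\frac d{\lambda^2}\covol L_2$. If the alternating convention $A_n=E_n$ / $E_n^c$ of Theorem~\ref{thm:MS} is inconvenient, we instead use the equivalent form of the Masur–Smillie argument with a consistent choice of side. The Birkhoff-type proof only needs $\mu(A_n)$ bounded away from $0$ and $1$ and $\sum\mu(A_n\Delta A_{n+1})<\infty$.

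The main work is condition (iii): bounding $\mu(A_n\Delta A_{n+1})$ by a constant times $\norm{w^{(n)}\times v_1^{(n)}}$. Passing from step $n$ to step $n+1$ means re-cutting along $w_{s}=w+s(v_1+v_2)$ with $s\le 3$, while the cylinders $C_1^{(n)}$ and $C_2^{(n)}$ are kept. The twist only moves mass inside the annulus $R_1^{(n)}\cup R_2^{(n)}$, so
\[
A_n\Delta A_{n+1}\subset R_1^{(n)}\cup R_2^{(n)}.
\]
This annulus is swept by the parallelograms spanned by $\{v_i,w\}$. Its area is therefore
\[
\norm{v_1\times w}+\norm{v_2\times w}=(1+a)\norm{w\times v_1}\le (1+\lambda)\norm{w\times v_1},
\]
using $v_2=a(v)v_1$ with $a\le\lambda$. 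The new slit $w_s$ with $s\le 3$ stays within at most three windings of this annulus, so the symmetric difference is covered by at most $O(s)$ copies of these parallelograms, a bound independent of $n$. Condition (1) then gives summability.

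It remains to justify that the sets really are Lebesgue measurable regions differing only in the annulus. This needs that $C_i^{(n)}$ lies inside $T_i^{(n+1)}$ on the same side, which holds because by Definition~\ref{def:(L_1^s,L_2^s)} the vector $v_i$ remains in $L_i^{s}$. Condition (2) is used to guarantee that the cylinders $C_i^{(n)}$ keep positive area, so the $E_n$ are genuine metric cylinders in the sense of Theorem~\ref{thm:MS}. With (i)–(iii) verified, Theorem~\ref{thm:MS} yields that the flow in direction $\theta$ is nonergodic. I expect the only delicate point to be the bookkeeping showing the symmetric difference is contained in a bounded number of copies of the annulus $R_1\cup R_2$.
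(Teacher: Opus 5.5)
Your proof is correct. It applies the Masur--Smillie criterion to a different partition from the one the paper uses.

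\textbf{The paper's route.} The paper uses the cylinder form of Theorem~\ref{thm:MS} exactly as quoted. It takes the alternating sequence $\dots,C_2^{(n-1)},C_1^{(n)},C_2^{(n)},C_1^{(n+1)},\dots$, whose consecutive members are disjoint because $C_1^{(n)}\subset T_1^{(n)}$ and $C_2^{(n)}\subset T_2^{(n)}\cap T_2^{(n+1)}$. Each $A_k\Delta A_{k+1}$ is then the complement of two disjoint cylinders, so its area is a sum of two parallelogram areas such as $\|w^{(n)}\times v_1^{(n)}\|+\|w^{(n)}\times v_2^{(n)}\|$. Condition (2) gives (ii) directly. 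However, in this route the boundary curves are the saddle connections of holonomy $v_i^{(n)}$ bounding $C_i^{(n)}$. Condition (i) therefore needs their transverse components to tend to zero. This holds because $v_1^{(n)}=(w^{(n+1)}-w^{(n)})/(s\eta)$ with $s\ge 1$, so these components are at most $h_n+h_{n+1}$; the paper leaves this step implicit.

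\textbf{Your route.} You take the slit partition $A_n=T_1^{(n)}$. This makes (i) immediate, since $\partial A_n$ is just the two copies of $w^{(n)}$ and (3) is exactly the required hypothesis. The price is that you need the general-partition form of the criterion rather than the cylinder form stated in Theorem~\ref{thm:MS}. You correctly observe that the Masur--Smillie argument does not use cylinders: it needs only $L^1$-convergence of $\chi_{A_n}$ together with $\mu(f_tA_n\,\Delta\,A_n)=O(|t|\,h_n)$.

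\textbf{Two corrections to your write-up.}
\begin{itemize}
\item Since $v_2=a v_1$, the region $R_1\cup R_2$ is a single flat cylinder in direction $v_1$ containing both the old and the new slits. We have $T_1^{(n)}=C_1\cup R_1$ and $T_1^{(n+1)}=C_1\cup R_1'$ with $R_1,R_1'$ inside that cylinder. Hence $\mu(A_n\Delta A_{n+1})\le(1+a)\|w\times v_1\|$ outright. There are no ``$O(s)$ copies'', and the bound $s\le 3$ plays no role in your argument.
\item Condition (2) is not used in your route. Your stated use of it refers to cylinders $E_n$ that you no longer need. Condition (ii) follows from covolume preservation, or directly from (2) via $c<\operatorname{Area}(C_1^{(n)})\le\operatorname{Area}(T_1^{(n)})\le\operatorname{Area}(X)-\operatorname{Area}(C_2^{(n)})<\operatorname{Area}(X)-c$.
\end{itemize}
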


\begin{proof}
Let $\left\{...,C_2^{(n-1)},C_1^{(n)},C_2^{(n)},C_1^{(n+1)},...\right\}$ be the sequence of metric cylinders in Theorem~\ref{thm:MS}.
Since $C_1^{(n)}\subset T_1^{(n)}$ and $C_2^{(n)}\subset T_2^{(n)}$, we have $C_1^{(n)}\cap C_2^{(n)}=\emptyset$. Since $C_2^{(n)}\subset T_2^{(n+1)}$ and $C_1^{(n+1)}\subset T_1^{(n+1)}$, we have $C_2^{(n)}\cap C_1^{(n+1)}=\emptyset$.

Now we have
\begin{align*}
   \mu\left(A_{2n-1} \Delta A_{2n}\right)+\mu\left(A_{2n} \Delta A_{2n+1}\right)=
   \operatorname{Area}\big(X\setminus (C_1^{(n)}\cup C_2^{(n)})\big)+\operatorname{Area}\big(X \setminus (C_2^{(n)}\cup C_1^{(n+1)})\big).
\end{align*}
We have chosen $v_2^{(n)}=\frac{\lambda}{\hat{d}} v_1^{(n)}$ where $\hat{d}\ge 1$ depends on $v_1^{(n)}$ and $d$. Then
\begin{align*}
    &\mu\left(A_{2n-1} \Delta A_{2n}\right)+\mu\left(A_{2n} \Delta A_{2n+1}\right) \\
    &\le \norm{w^{(n)} \times v_1^{(n)}}+\norm{w^{(n)} \times v_2^{(n)}}+ \norm{w^{(n+1)}\times v_2^{(n)}} + \norm{w^{(n+1)}\times v_1^{(n+1)}}\\
      &\le 4(1+\lambda)\norm{w^{(n)}\times v_1^{(n)}} + \norm{w^{(n+1)}\times v_1^{(n+1)}}.
\end{align*}
Hence the theorem follows from \Cref{thm:MS}.
\end{proof}

\begin{notation}
For any lattice $\Lambda$ in $\R^2$, let $\lambda_1(\Lambda),\lambda_2(\Lambda)$ denote the first and second successive minimum of $\Lambda$ respectively. 
\end{notation}

\subsection{Inductive step}
In this subsection, the main objective is to prove Theorem~\ref{th:induction step}. We show that if $w$ is an $H$-normal slit (see Definition~\ref{def:H-normal}), then we can find at least $c_D \frac{\norm{w}^{r-1}}{\log \norm{w}}$ slits $w'=w+s(1+\frac{\lambda}{\hat d(v)})v$ that are $H$-normal.

\begin{lemma}[Primitive lattice points in a square]\label{lem: lambda_1}
Fix $\kappa\geq 1$. There exist constants $A_0=A_0(\kappa)>1$
and $C_0=C_0(\kappa)>1$ with the following property.

Let $L\subset \mathbb R^2$ be a lattice, let
$\Delta=\covol(L)$, and let $K\subset\mathbb R^2$ be a square of side
length $a$. Assume that
\[
        K\subset B(0,\kappa a)
\]
and that
\[
        \lambda_1(L)\geq A_0\frac{\Delta}{a}.
\]
Then
\[
 \frac{a^2}{C_0\Delta}
 \leq
 \#\{v\in L\cap K:\ v\text{ is primitive in }L\}
 \leq
 \frac{C_0a^2}{\Delta}.
\]
Here a vector $v\in L$ is called primitive if $v\notin mL$ for every
integer $m\geq 2$.
\end{lemma}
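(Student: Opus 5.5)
We plan to normalize first and then count. Applying the diagonal scaling $x\mapsto x/a$ maps $K$ to a unit square $K'\subset B(0,\kappa)$ and $L$ to a lattice $L'$ with $\covol(L')=\Delta/a^2=:\delta$ and $\lambda_1(L')\ge A_0\delta$. Primitivity is preserved, so we may assume $a=1$. Since $\lambda_1\lambda_2\asymp\delta$ by Minkowski's second theorem, the hypothesis gives $\lambda_2(L')\le C\delta/\lambda_1\le C/A_0$. Choosing $A_0$ large therefore makes both successive minima small compared with the side length $1$; in particular $\lambda_1\le\lambda_2\le C/A_0$. We fix a reduced basis $(b_1,b_2)$ with $|b_i|\asymp\lambda_i$ and with angle bounded away from $0$. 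The fundamental parallelogram $P$ then has diameter $\le C'/A_0<1/10$.

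The upper bound is elementary. Tile the plane by translates $P+\ell$, $\ell\in L'$. Every lattice point in $K'$ lies in a tile contained in the $1/10$-neighbourhood of $K'$, whose area is at most $2$. Hence $\#(L'\cap K')\le 2/\delta$, and the primitive vectors form a subset of these points, which gives the upper bound with $C_0\ge 2$.

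For the lower bound, the inner square $K''$ (shrunk by $1/10$ on each side) has area $\ge 1/2$ and its tiles lie in $K'$, so $\#(L'\cap K')\ge 1/(2\delta)$. To extract primitive vectors we use the decomposition
\[
\#\{\text{primitive in }K'\}=\sum_{m\ge1}\mu(m)\,\#(mL'\cap K')
\]
via Möbius inversion. This is the main obstacle: a naive count of $mL'$ fails for large $m$ because $\lambda_1(mL')=m\lambda_1$ eventually exceeds the scale of $K'$. Also, $K'$ need not contain $0$, but it may still be close to it.

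To handle the obstacle we bound the nonprimitive points directly instead of inverting exactly. A point of $K'\cap L'$ that is not primitive lies in $mL'$ for some prime $m$, and we split the count over primes $p$. For $p$ with $p\lambda_2\le 1/10$, the tiling argument applied to $pL'$ gives $\#(pL'\cap K')\le 2/(p^2\delta)$. For larger $p$ we still have $p\lambda_1\le|v|\le\kappa\sqrt2$, and the points of $pL'\cap K'$ lie in $B(0,2\kappa)$. In the reduced basis, their coordinates $(pk_1,pk_2)$ satisfy $|k_1|\lambda_1 p\lesssim\kappa$ and $|k_2|\lambda_2p\lesssim\kappa$. Once $p\lambda_2>C\kappa$ this forces $k_2=0$, leaving only $O(\kappa/(p\lambda_1))$ points on a line. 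Summing over primes gives
\[
\sum_p\frac{2}{p^2\delta}+\sum_{p}\frac{C\kappa}{p\lambda_1}\mathbf 1_{p\lesssim \kappa/\lambda_1}+\sum_{p\,:\,1/(10\lambda_2)\lesssim p\lesssim C\kappa/\lambda_2}\frac{C\kappa^2}{p^2\lambda_1\lambda_2}.
\]
The first term is at most $(2(\zeta(2)-1)+o(1))/\delta$, which is unfortunately not smaller than $1/(2\delta)$. To fix this, we count in the central region more carefully using the standard asymptotic with error: for a convex set of area $1$ and a lattice with small $\lambda_2$, the count equals $1/\delta+O(1/(\lambda_1))$. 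Doing the same for $pL'$ yields the main term $\frac1\delta\big(1-\sum_p p^{-2}\big)+O(\ldots)\ge\frac{0.5}{\delta}$. The remaining terms are $O(\kappa\log(\kappa/\lambda_1)/\lambda_1+\kappa^2/\delta)$-type quantities. The last of these must be beaten by choosing the inner square and $A_0$ depending on $\kappa$. In particular, the $\kappa^2/(\lambda_1\lambda_2)$ contribution from large primes needs the restriction $p\gtrsim1/\lambda_2\ge A_0/C$, so its sum is $\lesssim\kappa^2/(A_0\delta)$. The term $\log/\lambda_1$ is $\le \delta^{-1}\lambda_2\log(1/\lambda_1)\ll\delta^{-1}$ once $A_0$ is large. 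Making these error terms genuinely small relative to $1/\delta$, uniformly, is the crux. With them controlled, the lower bound $\ge 1/(C_0\delta)=a^2/(C_0\Delta)$ follows.
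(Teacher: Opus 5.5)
Your upper bound (tiling by fundamental parallelograms of diameter $<1/10$) is correct. The overall plan for the lower bound is also sound: the sharp count $\#(L'\cap K')=\delta^{-1}+O(\lambda_1^{-1})$, minus $\sum_p\#(pL'\cap K')$, with main term $\delta^{-1}(1-\sum_p p^{-2})$. Your estimates for the primes with $p\lambda_2\le C\kappa$ work. Their errors are $\delta^{-1}$ times a function of $\kappa$ and $\lambda_2$ that tends to $0$ as $\lambda_2\to0$, so they are uniformly small because $\lambda_2\lesssim A_0^{-1}$. This is the same sieve as the paper's. The paper, however, first fits the box $\{x_0+mb_1+nb_2:\ |m|\le N_1,\ |n|\le N_2\}$ inside $K$ and sieves the integer pairs $(m_0+m,n_0+n)$ directly, using $\sum_p p^{-2}<\frac12$.

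The gap is exactly the step you call the crux. For $p\lambda_2>C\kappa$ you bound $\#(pL'\cap K')$ by $O(\kappa/(p\lambda_1))$ and sum over primes $p\lesssim\kappa/\lambda_1$. This gives roughly $\frac{\kappa}{\lambda_1}\log\log\frac{\kappa}{\lambda_1}$, which you then claim is $\ll\delta^{-1}$ once $A_0$ is large. It is not. The hypothesis $\lambda_1(L')\ge A_0\delta$ only forces $\lambda_2\lesssim A_0^{-1}$ and gives no lower bound on $\lambda_1$. Take $L'=\Z(\epsilon,0)\oplus\Z(0,A_0^{-1})$: the hypothesis holds with equality for every $0<\epsilon<A_0^{-1}$, $\delta^{-1}=A_0/\epsilon$, and the ratio of your sum to $\delta^{-1}$ grows like $\kappa A_0^{-1}\log\log(1/\epsilon)$. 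This is not an artifact of crude estimates. For $K'=[-\frac12,\frac12]^2$, the actual value of $\sum_p\#\bigl((pL'\cap K')\setminus\{0\}\bigr)$ over these primes is of size $\delta^{-1}A_0^{-1}\log\log(1/\epsilon)$, because each point $kb_1$ is counted once for every large prime factor of $k$. So this route yields no $C_0$ independent of $L$.

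The repair is to stop union-bounding over primes in this range. For every such $p$, the nonzero points of $pL'\cap K'$ lie on the single line $\R b_1$. That line contains at most $2\kappa/\lambda_1+1\lesssim\kappa\lambda_2/\delta\lesssim\kappa/(A_0\delta)$ points of $L'$ in $B(0,\kappa)$. You must also discard $0$, which lies in every $pL'$. This is how the paper's Claim avoids the problem. The row $n=0$ is thrown away wholesale (the $|I|$ term). For $n\neq0$, only primes $p\le|n|\le(B+1)N_2$ occur, where $N_2\asymp a/\lambda_2(L)$ is the short side of the box. Hence every error term is controlled by $N_2$ and never by $N_1$, which can be arbitrarily large.
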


\begin{proof}
We first record an elementary fact about primitive integer points.

\medskip

\noindent
\textbf{Claim.}
For every $B\geq 1$ there exist constants $N_0=N_0(B)$ and
$c_B>0$ such that the following holds. Let $N_1,N_2\geq N_0$ be integers
and let $m_0,n_0\in\mathbb Z$ satisfy
\[
        |m_0|\leq B N_1,\qquad |n_0|\leq B N_2.
\]
Then
\[
\#\{(m,n)\in\mathbb Z^2:\ |m-m_0|\leq N_1,\ |n-n_0|\leq N_2,
\ \gcd(m,n)=1\}
        \geq c_B N_1N_2 .
\]

Indeed, let
\[
        I=[m_0-N_1,m_0+N_1]\cap\mathbb Z,\qquad
        J=[n_0-N_2,n_0+N_2]\cap\mathbb Z .
\]
By interchanging the two coordinates if necessary, we may assume
$N_2\leq N_1$. For $n\neq 0$, the number of $m\in I$ which are not
coprime to $n$ is at most
\[
        \sum_{p\mid n}\#(I\cap p\mathbb Z),
\]
where the sum is over primes. Since $|n|\leq (B+1)N_2$ for $n\in J$, we get
\begin{align*}
&\#\{(m,n)\in I\times J:\gcd(m,n)>1\}        \\
&\qquad\leq |I|
+\sum_{p\leq (B+1)N_2}
        \#(I\cap p\mathbb Z)\#(J\cap p\mathbb Z)                  \\
&\qquad\leq |I|
+\sum_{p\leq (B+1)N_2}
        \left(\frac{|I|}{p}+1\right)
        \left(\frac{|J|}{p}+1\right)                              \\
&\qquad\leq
        |I||J|\sum_p\frac1{p^2}
        +(|I|+|J|)\sum_{p\leq (B+1)N_2}\frac1p
        +\pi((B+1)N_2)+|I|.
\end{align*}
Since
\(
        \sum_p\frac1{p^2}<\frac12,
\)
we may choose $N_0(B)$ sufficiently large so that, whenever
$N_2\geq N_0(B)$, the last three terms are at most
$\frac14 |I||J|$. Thus at least a fixed positive proportion of the
points of $I\times J$ are primitive. This proves the claim.

\medskip

We now prove the lemma. Let $\Delta=\covol(L)$. Choose a reduced
basis $b_1,b_2$ of $L$. Thus
\[
        \|b_1\|=\lambda_1(L),\qquad
        \|b_1\|\leq \|b_2\|,
\]
and, if $\alpha$ is the angle between $b_1$ and $b_2$, then
\(
        \frac{\pi}{3}\leq \alpha\leq \frac{2\pi}{3},
\)
and thus
\(
        \sin\alpha\geq \frac{\sqrt3}{2}
\)
and
\(
        \Delta=|b_1\times b_2|
        =\|b_1\|\|b_2\|\sin\alpha .
\)
The hypothesis gives
\[
        \|b_2\|
        \leq \frac{2\Delta}{\sqrt3\,\|b_1\|}
        \leq \frac{2a}{\sqrt3\,A_0}.
\]
Since $\|b_1\|\leq\|b_2\|$, both basis vectors have length
$\ll a/A_0$.

We first prove the upper bound. If
\[
        v=mb_1+nb_2\in L\cap K,
\]
then $\|v\|\leq \kappa a$. Since
\[
        m=\frac{v\times b_2}{b_1\times b_2},
        \qquad
        n=\frac{b_1\times v}{b_1\times b_2},
\]
we have
\[
        |m|\leq \frac{2\kappa a}{\sqrt3\,\|b_1\|},
        \qquad
        |n|\leq \frac{2\kappa a}{\sqrt3\,\|b_2\|}.
\]
Therefore
\[
        \#(L\cap K)
        \ll_\kappa
        \frac{a}{\|b_1\|}\frac{a}{\|b_2\|}
        \leq
        \frac{C_\kappa a^2}{\Delta},
\]
because
\[
        \frac1{\|b_1\|\|b_2\|}
        =
        \frac{\sin\alpha}{\Delta}
        \leq \frac1{\Delta}.
\]
This gives the desired upper bound, since primitive points are a subset
of all lattice points.

It remains to prove the lower bound. Let $z$ be the center of $K$.
Since $K\subset B(0,\kappa a)$, we have $\|z\|\leq \kappa a$. The
covering radius of the fundamental parallelogram spanned by $b_1,b_2$ is
at most
\[
        \frac{\|b_1\|+\|b_2\|}{2}.
\]
Hence there exists a lattice point
\[
        x_0=m_0b_1+n_0b_2\in L
\]
such that
\[
        \|x_0-z\|
        \leq \frac{\|b_1\|+\|b_2\|}{2}.
\]
Choose
\[
        N_i=\left\lfloor \frac{a}{10\|b_i\|}\right\rfloor,
        \qquad i=1,2.
\]
We choose $A_0=A_0(\kappa)$ sufficiently large so that
\[
        N_i\geq N_0(B_\kappa)
\]
for $i=1,2$, where
\[
        B_\kappa=\frac{40(\kappa+1)}{\sqrt3}.
\]
For every pair of integers $(m,n)$ satisfying
\[
        |m|\leq N_1,\qquad |n|\leq N_2,
\]
we have
\[
\begin{aligned}
\|x_0+mb_1+nb_2-z\|
&\leq \|x_0-z\|+N_1\|b_1\|+N_2\|b_2\|  \\
&\leq \frac{\|b_1\|+\|b_2\|}{2}+\frac a{10}+\frac a{10}.
\end{aligned}
\]
Increasing $A_0$ if necessary, the first term is at most $a/20$.
Therefore
\[
        \|x_0+mb_1+nb_2-z\|\leq \frac a4<\frac a2.
\]
Since the Euclidean ball of radius $a/2$ centered at $z$ is contained in
the square $K$, it follows that
\[
        x_0+mb_1+nb_2\in K
\]
for all such $(m,n)$.

We now check that the center coordinates $m_0,n_0$ are not too large.
Using again $\sin\alpha\geq \sqrt3/2$, we obtain
\[
        |m_0|
        =
        \left|\frac{x_0\times b_2}{b_1\times b_2}\right|
        \leq
        \frac{2\|x_0\|}{\sqrt3\,\|b_1\|},
\]
and similarly
\[
        |n_0|
        \leq
        \frac{2\|x_0\|}{\sqrt3\,\|b_2\|}.
\]
Since
\[
        \|x_0\|
        \leq \|z\|+\|x_0-z\|
        \leq (\kappa+1)a
\]
after increasing $A_0$ if necessary, and since
\[
        N_i\geq \frac{a}{20\|b_i\|},
\]
we get
\[
        |m_0|\leq B_\kappa N_1,
        \qquad
        |n_0|\leq B_\kappa N_2.
\]
By the claim, the number of pairs $(m,n)$ with
\[
        |m|\leq N_1,\qquad |n|\leq N_2
\]
for which
\[
        \gcd(m_0+m,n_0+n)=1
\]
is at least
\[
        c_{B_\kappa}N_1N_2.
\]
For each such pair, the vector
\[
        x_0+mb_1+nb_2
        =
        (m_0+m)b_1+(n_0+n)b_2
\]
is primitive in $L$ and lies in $K$.

Finally,
\[
        N_1N_2
        \geq
        \frac{a^2}{400\|b_1\|\|b_2\|}
        =
        \frac{a^2\sin\alpha}{400\Delta}
        \geq
        \frac{\sqrt3}{800}\frac{a^2}{\Delta}.
\]
Thus
\[
        \#\{v\in L\cap K:\ v\text{ is primitive in }L\}
        \geq
        c_{B_\kappa}\frac{\sqrt3}{800}\frac{a^2}{\Delta}.
\]
After increasing $C_0(\kappa)$ to dominate the constants in the upper and
lower estimates, the lemma follows.
\end{proof}

Henceforth, set $\kappa=4$. Consequently, the constants $A_0$ and $C_0$ in Lemma~\ref{lem: lambda_1} may be taken to be universal.
\begin{notation}
Fix \(1<r<2\). For a nonzero vector \(y\in\mathbb R^2\) with $\norm{y}>1$, define
\[
P_y(x):=\frac{\langle x,y\rangle}{\|y\|},
\qquad
T_y(x):=\frac{y\times x}{\|y\|},
\]
and let
\[
U_yx:=\bigl(T_y(x),P_y(x)\bigr).
\]
Thus \(U_y\) is an orthogonal linear map with
\(
|\det U_y|=1.
\)

Put
\[
R_y:=\|y\|^r,
\qquad
Q_y:=\|y\|\log\|y\|,
\]
and define
\[
g_y:=
\begin{pmatrix}
\sqrt{Q_yR_y} & 0\\
0 & (Q_yR_y)^{-1/2}
\end{pmatrix}
U_y.
\]
\end{notation}

\begin{definition}[H-normal slit]\label{def:H-normal}
Let $(L_1,L_2,w)$ be a splitting and $\Delta=\covol(L_1)$. For $H>0$, we say $w$ is an $H$-normal slit, or $(L_1,L_2,w)$ is an $H$-normal splitting, if 
\begin{align*}
    \lambda_1(g_w \cdot L_1) \geq H \Delta \sqrt{\frac{Q_w}{R_w}}.
\end{align*}
\end{definition}

\begin{definition}[Good twists]\label{def:good twist}
Let $(L_1,L_2,w)$ be a splitting and $\Delta=\covol(L_1)$. For $H>0$, a primitive vector $v \in L_1$ is said to be an $H$-good twist if the following two conditions hold:

(i) $R_w\leq  P_w(v) \leq \frac{3}{2}R_w$ and $\frac{1}{4Q_w}\le T_w(v) \le \frac{3}{4Q_w}$;

(ii) At least one of $(L_1^s,L_2^s,w_s)$, for $s\in \left\{1,2,3\right\}$, is $H$-normal. More concretely,
for $g_{w_s}=\left( \begin{smallmatrix} (Q_{w_s} R_{w_s})^{1/2} &0\\ 0 &(Q_{w_s} R_{w_s})^{-1/2} \end{smallmatrix} \right) U_{w_s}$, then
$\lambda_1(g_{w_s} \cdot L_1^s)\ge H\Delta \sqrt{\frac{Q_{w_s}}{R_{w_s}}}$ hold for at least one of $s=1,2,3$, where $w_s$ and $L_1^s$ are defined in \eqref{eq:child-slit} and \eqref{ali:L_1'}.
\end{definition}

Let
\[
\mathcal A_D
:=
\left\{
\frac{\lambda}{\widehat d}:
\widehat d\mid d
\right\},
\qquad
N_D:=\#\mathcal A_D.
\]
In particular, \(N_D\leq \tau(d):=\#\{\delta\in\mathbb Z_{>0}:\delta\mid d\}\).

\begin{lemma}[Bad-child estimate]
\label{lem:bad-child-estimate}
Let $(L_1,L_2,w)$ be a splitting of type $(d,\lambda)$ and $    \operatorname{covol}(L_1)=d$. Then there exists
\[
H_D\geq
\max\left\{
2A_0,\,
64C_0N_D
\right\}
\]
such that the following holds.

For every \(H\geq H_D\), if $w$ is an $H$-normal slit,
let $$\mathcal V(w)=
\left\{
v\in L_1^{\mathrm{pr}}:
R_w\le P_w(v)\le\frac32R_w,\quad
\frac1{4Q_w}\le T_w(v)\le\frac3{4Q_w}
\right\}$$ be the set of the primitive vectors of $L_1$ which satisfy condition (i) in Definition~\ref{def:good twist}.
Let 
\begin{align}\label{eq:bad-set-definition}
\mathcal B_H(w)
:=
\left\{
v\in\mathcal V(w):
\lambda_1(g_{w_s}L_1^s)
<
Hd
\sqrt{\frac{Q_{w_s}}{R_{w_s}}}
\text{ for every }s=1,2,3
\right\}
\end{align}
where $w_s$ and $L_1^s$ are defined in (\ref{eq:child-slit}) and (\ref{ali:L_1'}).
Then there is
\(W_0=W_0(D,r,H)\) such that, whenever \(\norm{w}\geq W_0\) 
the following bad set satisfies
\begin{align}\label{eq:bad-child-bound}
\#\mathcal B_H(w)
\leq
\frac{1}{8C_0d}\frac{R_w}{Q_w}.
\end{align}
\end{lemma}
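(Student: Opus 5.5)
The plan is to show that a bad $v$ forces a short Diophantine relation for the irrational number $a(v)=\lambda/\widehat d(v)\in\mathcal A_D$. Irrationality of $\lambda$ then rules out almost all such $v$, and a crude count handles the rest. Throughout, fix $v\in\mathcal V(w)$, write $L_1=\Z v\oplus\Z u$ with $u$ normalized as in \eqref{def:u}, and set $a=a(v)$, $\eta=\eta(v)$. Since $P_w(v)\asymp R_w=\norm{w}^r$ and $r>1$, we have $\norm{w_s}\asymp_D\norm{w}^r$. Hence $R_{w_s}\asymp\norm{w}^{r^2}$ and $Q_{w_s}\asymp r\norm{w}^r\log\norm{w}$, so $Q_{w_s}/R_{w_s}\to0$ as $\norm{w}\to\infty$.

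\textbf{Step 1 (geometry of $w_s$ and $v$).} From $w\times v=\norm{w}T_w(v)\asymp\norm{w}/Q_w=1/\log\norm{w}$ we get $w_s\times v=w\times v$. Therefore $T_{w_s}(v)\asymp 1/(\norm{w}^r\log\norm{w})$, and the angle between $w_s$ and $v$ is $O(\norm{w}^{-2r}/\log\norm{w})$. A direct check shows that $g_{w_s}v$ itself is not short: its length is at least $\norm{v}/\sqrt{Q_{w_s}R_{w_s}}$, and this exceeds $Hd\sqrt{Q_{w_s}/R_{w_s}}$ because $\norm{v}\gg Q_{w_s}$ once $\norm{w}\ge W_0$. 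So if $v\in\mathcal B_H(w)$, then for each $s$ there is $x_s=m_sv+n_s(u+sav)\in L_1^s$ with $n_s\neq0$ and $\lambda_1$-witness bounds
\[
\abs{T_{w_s}(x_s)}\le \frac{Hd}{R_{w_s}},\qquad \abs{P_{w_s}(x_s)}\le Hd\,Q_{w_s}.
\]

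\textbf{Step 2 (bounding $n_s$ and producing a linear condition).} Since $\covol L_1^s=d$, we have $\abs{x_s\times v}=\abs{n_s}d$. Using the two bounds above together with the tiny angle between $v$ and $w_s$, we get $\abs{x_s\times v}\le\norm{v}\,\abs{T_{w_s}(x_s)}+\norm{x_s}\cdot(\text{angle})\ll Hd$, so $1\le\abs{n_s}\ll H$. Next project $x_s$ onto the direction of $v$ (which agrees with that of $w_s$ up to negligible error) and write $c_u:=P_v(u)/\norm{v}\in[-\frac12,\frac12]$. The bound $\abs{P_{w_s}(x_s)}\le HdQ_{w_s}$ then becomes
\[
\bigl|n_s(c_u+sa)+m_s\bigr|\le \epsilon:=C\,Hd\,\frac{Q_{w_s}}{R_w}\ll_{D,H}\frac{\log\norm{w}}{\norm{w}^{r^2-r}}\cdot\norm{w}^{0}\to0 .
\]
Here the contribution of the component of $u$ transverse to $v$ is $O(d/\norm{v})$ and is absorbed into $\epsilon$.

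\textbf{Step 3 (eliminating $c_u$ using several shears).} Take two shears $s\neq s'$ from $\{1,2,3\}$. Multiply the relation for $s$ by $n_{s'}$, the relation for $s'$ by $n_s$, and subtract. This yields
\[
\bigl|n_sn_{s'}(s-s')\,a-(n_{s'}m_s-n_sm_{s'})\bigr|\ll H\epsilon,
\]
with integer coefficients and $\abs{n_sn_{s'}(s-s')}\ll H^2$. Since $a\in\mathcal A_D$ is a quadratic irrational and $\mathcal A_D$ is finite, there is $c_D>0$ with $\abs{qa-p}\ge c_D/\abs q$ for all integers $q\neq0$ and $p$. This contradicts the displayed inequality once $\norm{w}\ge W_0(D,r,H)$. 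In that regime $\mathcal B_H(w)$ would even be empty, which certainly gives \eqref{eq:bad-child-bound}.

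\textbf{Main obstacle.} The hardest step is making Step 2 rigorous. The transverse error terms must really be $o(1)$ uniformly, and in particular the term $n_sP(u)$ must be controlled when $\norm{u}$ is as large as $\norm{v}/2$. If the error $\epsilon$ is not small enough for the pure elimination argument, I would use a counting fallback. For each $s$ and each $n_s$ with $\abs{n_s}\ll H$, the relation confines $c_u$ (equivalently, the position of $v$ modulo $L_1$ along the strip $\mathcal V(w)$) to a union of intervals of total relative length $O(H\epsilon)$. Intersecting these conditions over the three shears, and counting primitive points in the corresponding thin regions by Lemma~\ref{lem: lambda_1}, should give a bad count of order $H^{O(1)}\epsilon\cdot R_w/(Q_wd)$. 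This is at most $\frac{1}{8C_0d}\frac{R_w}{Q_w}$ for large $\norm{w}$. The $H$-normality of $w$ supplies exactly the hypothesis $\lambda_1(g_wL_1)\ge A_0\Delta/a$ that Lemma~\ref{lem: lambda_1} requires, once $\mathcal V(w)$ is tiled by squares in the $g_w$-coordinates.
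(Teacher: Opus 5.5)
\textbf{The gap is a scaling error in Step 2, and it breaks the argument.} Since $\|w_s\|\asymp\|w\|^r$, you have $Q_{w_s}\asymp r\|w\|^r\log\|w\|$, as you note yourself, while $\|v\|\asymp R_w=\|w\|^r$. Dividing $|P_{w_s}(x_s)|\le HdQ_{w_s}$ by $\|v\|$ therefore only gives
\[
\bigl|m_s+n_s(c_u+sa)\bigr|\lesssim Hd\,\frac{Q_{w_s}}{R_w}\asymp Hd\,r\log\|w\| .
\]
So $\epsilon\to\infty$, not $0$. The rate $\log\|w\|/\|w\|^{r^2-r}$ you wrote is $Q_{w_s}/R_{w_s}$, not $Q_{w_s}/R_w$.

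Consequences:
\begin{itemize}
\item The relation is only the size bound $|m_s|\lesssim H\log\|w\|$ (the paper's Claim 2), not a small-error Diophantine condition. The right-hand side in Step 3 becomes of order $H^2d\log\|w\|$, so $|qa-p|\ge c/|q|$ yields no contradiction.
\item This cannot be repaired by sharpening the longitudinal bound. The transverse relations force any bad $v$ to satisfy $(T+a)X\approx-\eta d$, with $X=w\times v\asymp 1/\log\|w\|$ and $T=\frac{m_2}{n_2}-\frac{m_1}{n_1}$. Hence $|T|\asymp\log\|w\|$, and some $m_s$ is genuinely of order $\log\|w\|$, exactly the regime your longitudinal bound permits.
\item Your conclusion that $\mathcal B_H(w)$ is empty is therefore unsupported. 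The lemma only asserts a count, and the hypothesis $H\ge 64C_0N_D$ is what makes the count of bad twists on a few exceptional lines small enough.
\item The counting fallback fails for the same reason: a region of relative width $O(H\epsilon)$ is not thin.
\item Step 1 has the same confusion. Since $\|v\|\ll Q_{w_s}$, the vector $g_{w_s}v$ is long because of its transverse component $\sqrt{Q_{w_s}R_{w_s}}\,X/\|w_s\|$, i.e.\ because $X\gtrsim 1/\log\|w\|\gg Hd\|w_s\|^{1-r}$. It is not long because of its longitudinal one.
\end{itemize}

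\textbf{The missing idea is that the usable information sits in the transverse inequalities $|w_s\times x_s|\lesssim HdR_w^{1-r}$, which really are small.} With $Y=w\times u$ and $v\times u=d$, they read
\[
\bigl|(m_s+san_s)X+n_sY+s\eta n_sd\bigr|\lesssim HdR_w^{1-r}\qquad(s=1,2,3).
\]
The paper then proceeds as follows.
\begin{enumerate}
\item Dividing by $n_s$ and differencing in $s$ eliminates $Y$.
\item The $n_s$ are bounded and $X\gtrsim1/\log\|w\|$, so the second difference $\frac{m_1}{n_1}-2\frac{m_2}{n_2}+\frac{m_3}{n_3}$ must vanish. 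This gives the resonance $|(T+a)X+\eta d|\lesssim R_w^{1-r}$.
\item It also gives a primitive $z(v)=qu+pv$ with $|w\times z(v)|\lesssim R_w^{1-r}$, $\|z(v)\|\lesssim R_w\log\|w\|$ and $z(v)\times v=-qd$.
\item Since $z(v)\times z(v')\in d\mathbb Z$ is $o(1)$, all $z(v)$ equal $\pm z_0$.
\item Irrationality of $a$ enters only here: it makes $\{q(T+a)/(1+a)\}$ uniformly discrete, which pins down $q$. Hence $\mathcal B_H(w)$ lies on at most $2N_D$ affine lines $v_0+\mathbb Z z_0$.
\item The $H$-normality of $w$, which your main argument never uses, forces $|P_w(z_0)|\ge\frac H2 dQ_w$. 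So each line carries at most $2R_w/(HdQ_w)$ candidates, giving $\#\mathcal B_H(w)\le\frac{4N_D}{Hd}\frac{R_w}{Q_w}$.
\end{enumerate}
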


\begin{proof}
Let $W=\|w\|$, $R=W^r$ and $Q=W\log W$.
Fix \(v\in\mathcal B_H(w)\), and abbreviate
\[
a:=\frac{\lambda}{\hat{d}(v)},
\qquad
\eta:=1+a.
\]
All constants $\left\{C_i: 0\le i\le 15\right\}$ below are independent of \(W,v,s\) and the divisor $\hat{d}(v)$ . They may depend on
$D,r$ and $H$.

Since \(v\in\mathcal V(w)\), we have
\begin{align}\label{eq:v-length}
    R\leq \|v\|<2R
\end{align}
for all sufficiently large \(W\). Moreover,
$X:=w\times v
=
W T_w(v)$
satisfies
\begin{align}\label{eq:X-range}
    \frac1{4\log W}
\leq
X
\leq
\frac3{4\log W}.
\end{align}
Since \(a\) belongs to the finite set \(\mathcal A_D\), we have
\begin{align}\label{eq:child-length-comparison}
 R\leq \|w_s\|\leq [1+6(1+\lambda)]R   
\end{align}
for \(s=1,2,3\), once \(W\) is sufficiently large. 

Since \(v\) is bad, for every \(s\in\{1,2,3\}\) there exists a
primitive vector
$\zeta_s\in L_1^s\setminus\{0\}$
such that
\begin{align}\label{eq:short-zeta}
\|g_{w_s}\zeta_s\|<Hd
\sqrt{\frac{Q_{w_s}}{R_{w_s}}}.
\end{align}
 Write
$\zeta_s
=
\alpha_sv+\beta_s(u+sav)
=
(\alpha_s+sa\beta_s)v+\beta_su,$
where $\alpha_s,\beta_s\in\mathbb Z$ and $\gcd(\alpha_s,\beta_s)=1.$
The transverse component in \eqref{eq:short-zeta} gives $\sqrt{Q_{w_s}R_{w_s}}\,
\frac{|w_s\times\zeta_s|}{\|w_s\|}
<
Hd
\sqrt{\frac{Q_{w_s}}{R_{w_s}}}.$
Therefore, we have $|w_s\times\zeta_s|
<
Hd
\frac{\|w_s\|}{R_{w_s}}
=
Hd\|w_s\|^{1-r}.$
By \eqref{eq:child-length-comparison}, there is a constant \(C_1>0\)
such that
\begin{align}
|w_s\times\zeta_s|
\leq
C_1Hd R^{1-r}
=:E_W.
\label{eq:transverse-error}
\end{align}

\medskip
\noindent
\textbf{Claim 1: \(\beta_s\) has a uniform bound.}
Since \(|\det g_{w_s}|=1\), we have
\begin{align}
d|\beta_s|
=
|\zeta_s\times v|
=
|(g_{w_s}\zeta_s)\times(g_{w_s}v)|.
\label{eq:beta-cross-product}
\end{align}
Also, $w_s\times v=w\times v=X.$
If $A_s:=\sqrt{Q_{w_s}R_{w_s}},$
then
\begin{align}
\begin{split}
\|g_{w_s}v\|
&\leq
A_s\frac{|w_s\times v|}{\|w_s\|}
+
A_s^{-1}\|v\| \\
&\leq
C_2
\frac{R^{(r-1)/2}}{\sqrt{\log W}}.
\end{split}
\label{eq:gs-v-bound}
\end{align}
On the other hand, \eqref{eq:child-length-comparison} and \eqref{eq:short-zeta} imply
\begin{align}
\|g_{w_s}\zeta_s\|
\leq
C_3Hd
R^{(1-r)/2}\sqrt{\log W}.
\label{eq:gs-zeta-bound}
\end{align}
Combining \eqref{eq:beta-cross-product},
\eqref{eq:gs-v-bound}, and \eqref{eq:gs-zeta-bound}, we obtain $|\beta_s|\leq C_4H.$
Choose an integer \(B=B(D,r,H,d)\) such that
\begin{align}
|\beta_s|\leq B
\qquad
(s=1,2,3).
\label{eq:beta-bound}
\end{align}

\medskip
\noindent
\textbf{Claim 2: $\alpha_s$ is bounded from above.} 
We show that
\begin{align}
|\alpha_s|\leq C_5H\log W.
\label{eq:alpha-bound}
\end{align}
Indeed, since \(A_s>1\),
$\|\zeta_s\|
\leq
A_s\|g_{w_s}\zeta_s\|
<
Hd Q_{w_s}
\leq
C_6Hd R\log W.$
Write
\[
u=t v+u^\perp,
\qquad
u^\perp\perp v.
\]
By \eqref{def:u} for $1<r<2$, \(|t|\leq 1/2\). Hence the component of
\(\zeta_s\) parallel to \(v\) gives
\[
\left|
\alpha_s+sa\beta_s+t\beta_s
\right|
\|v\|
\leq
\|\zeta_s\|.
\]
Using \eqref{eq:v-length} and \eqref{eq:beta-bound} proves
\eqref{eq:alpha-bound}.

Next we show that
$\beta_s\neq 0$
for $s=1,2,3$.
Indeed, if \(\beta_s=0\), then primitivity implies
\(\alpha_s=\pm1\), so \(\zeta_s=\pm v\). By
\eqref{eq:transverse-error}, we have
$X
=
|w_s\times v|
\leq
C_1Hd R^{1-r},$
contradicting the lower bound in \eqref{eq:X-range} when \(W\) is
sufficiently large.

Set
\begin{align}
    \Xi:=\frac{\alpha_1}{\beta_1}-2\frac{\alpha_2}{\beta_2}+\frac{\alpha_3}{\beta_3}.
\label{eq:Xi-equation}
\qquad
Y:=w\times u.
\end{align}

\medskip
\noindent
\textbf{Claim 3: $\Xi =0$ when $W$ is sufficiently large.}\\
Expanding \(w_s\times\zeta_s\), we obtain
\[
\begin{split}
w_s\times\zeta_s
&=
(w+s\eta v)
\times
\bigl((\alpha_s+sa\beta_s)v+\beta_su\bigr)\\
&=
(\alpha_s+sa\beta_s)X
+\beta_sY
+s\eta\beta_sd.
\end{split}
\]
Since \(|\beta_s|\geq1\), equation
\eqref{eq:transverse-error} gives
\begin{align}
\left|
Y+(\frac{\alpha_s}{\beta_s}+sa)X+s\eta d
\right|
\leq E_W
\label{eq:three-equations}
\end{align}
for \(s=1,2,3\).

Subtracting the equation for \(s=1\) from the equation for \(s=2\),
and then the equation for \(s=2\) from the equation for \(s=3\),
gives
\begin{align}
\left|
(\frac{\alpha_2}{\beta_2}-\frac{\alpha_1}{\beta_1}+a)X+\eta d
\right|
\leq 2E_W,
\label{eq:first-difference}
\end{align}
and
\begin{align*}
\left|
(\frac{\alpha_3}{\beta_3}-\frac{\alpha_2}{\beta_2}+a)X+\eta d
\right|
\leq 2E_W.
\end{align*}
Subtracting these two inequalities yields $|\Xi X|\leq4E_W$.
By \eqref{eq:beta-bound}, every \(\frac{\alpha_s}{\beta_s}\) has denominator at most
\(B\). Therefore either \(\Xi=0\), or $|\Xi|\geq B^{-3}$.
If \(\Xi\neq0\), then \eqref{eq:Xi-equation} gives $X\leq4B^3E_W$.
This again contradicts \eqref{eq:X-range} when \(W\) is sufficiently
large. Consequently, $\Xi=0$.

Then define
\begin{align*}
T:=\frac{\alpha_2}{\beta_2}-\frac{\alpha_1}{\beta_1}=\frac{\alpha_3}{\beta_3}-\frac{\alpha_2}{\beta_2}.
\end{align*}
Then \eqref{eq:first-difference} becomes
\begin{align}
\left|
(T+a)X+\eta d
\right|
\leq2E_W.
\label{eq:resonance-equation}
\end{align}
Put
\begin{align}
b:=\frac{\alpha_1}{\beta_1}-T=2\frac{\alpha_1}{\beta_1}-\frac{\alpha_2}{\beta_2}.
\label{eq:b-definition}
\end{align}
Subtracting \eqref{eq:first-difference} from the \(s=1\) instance of
\eqref{eq:three-equations}, we obtain
\begin{align}
|Y+bX|=|w\times( u+ bv)|\leq3E_W.
\label{eq:Y-bX}
\end{align}

\medskip
\noindent
\textbf{Claim 4: $u+bv$ determines a primitive vector $z(v)$ in $L_1$.}

Write
$b=\frac pq$
in lowest terms, with \(q>0\). Since
\(b=2\frac{\alpha_1}{\beta_1}-\frac{\alpha_2}{\beta_2}\) and the denominators of \(\frac{\alpha_1}{\beta_1},\frac{\alpha_2}{\beta_2}\) are at most \(B\),
we have
\begin{align}
1\leq q\leq B^2.
\label{eq:q-bound}
\end{align}
Define $z(v):=qu+pv\in L_1$.
Because \(\gcd(p,q)=1\) and \((v,u)\) is a basis of \(L_1\), the
vector \(z(v)\) is primitive in \(L_1\).
Equation \eqref{eq:Y-bX} gives
\begin{align}
|w\times z(v)|
=
q|Y+bX|
\leq
3B^2E_W.
\label{eq:z-transverse}
\end{align}
Moreover,
\begin{align}
z(v)\times v
=
q(u\times v)
=
-qd.
\label{eq:z-cross-v}
\end{align}
By \eqref{eq:beta-bound}, \eqref{eq:alpha-bound}, and
\eqref{eq:b-definition}, we have $|b|\leq C_7H\log W$.
Together with \eqref{eq:q-bound}, this gives $|p|\leq C_8H\log W$.
The definition of $u$ and
\(v\times u=d\) imply $\|u\|
\leq
\frac12\|v\|+\frac{d}{\|v\|}
\leq C_9R$.
Therefore
\begin{align}
\|z(v)\|\leq C_{10}R\log W.
\label{eq:z-length}
\end{align}

\medskip
\noindent
\textbf{Claim 5: the vectors $z(v)$ agree up to sign.}

Let \(v,v'\in\mathcal B_H(w)\), and put
$z:=z(v)$ and $z':=z(v')$.
Using coordinates parallel and perpendicular to \(w\), we obtain
$|z\times z'|
\leq
\frac{
\|z\|\,|w\times z'|
+
\|z'\|\,|w\times z|
}{W}$.
By \eqref{eq:z-transverse} and \eqref{eq:z-length}, we have $|z\times z'|
\leq
C_{11}
\frac{R\log W\,R^{1-r}}{W}$.
Since \(R=W^r\), $\frac{R^{2-r}}W
=
W^{r(2-r)-1}
=
W^{-(r-1)^2}.$
Thus
\begin{align*}
|z\times z'|
\leq
C_{11}W^{-(r-1)^2}\log W.
\end{align*}
For sufficiently large \(W\), the right-hand side is less than
\(d\). Since \(z,z'\in L_1\), $z\times z'\in d\mathbb Z$.
It follows that $z\times z'=0.$
Both \(z\) and \(z'\) are primitive in \(L_1\), and therefore
$z'=\pm z.$

Consequently, there exists a primitive vector \(z_0\in L_1\) such
that
\begin{align*}
z(v)\in\{z_0,-z_0\}
\qquad
\text{for every }v\in\mathcal B_H(w).
\end{align*}

\medskip
\noindent
\textbf{Claim 6: for fixed \(a\) and a fixed sign of \(z_0\), the integer \(q\)
in \eqref{eq:z-cross-v} is fixed.}

For \(z=qu+pv\), using \eqref{eq:z-cross-v} and the identity
\[
z\times v
=
P_w(z)T_w(v)-T_w(z)P_w(v),
\]
we obtain
\begin{align}
\frac{P_w(z)}WX
=
-qd
+
\frac{w\times z}{W}P_w(v).
\label{eq:cX-equation}
\end{align}
By \eqref{eq:z-transverse} and
\(P_w(v)\leq 3R/2\), we have $\left|
\frac{P_w(z)}WX+qd
\right|
\leq
C_{12}W^{-(r-1)^2}.$
Multiplying \eqref{eq:cX-equation} by \(\eta\), multiplying
\eqref{eq:resonance-equation} by \(q\), and subtracting, we get
\[
\left|
\eta \frac{P_w(z)}W-q(T+a)
\right|
X
\leq
C_{13}W^{-(r-1)^2}.
\]
Using the lower bound for \(X\) in \eqref{eq:X-range}, we conclude
that
\begin{align}
\left|
\frac{P_w(z)}W-\frac{q(T+a)}{1+a}
\right|
\leq
\varepsilon_W,
\label{eq:resonance-approximation}
\end{align}
where
\begin{align*}
\varepsilon_W
:=
C_{14}W^{-(r-1)^2}\log W
\longrightarrow0.
\end{align*}

We now prove that the possible values in
\eqref{eq:resonance-approximation} are uniformly separated.
For fixed \(a\in\mathcal A_D\), define
\begin{align*}
\mathcal F_{a,B}
:=
\left\{
\frac{q(T+a)}{1+a}:
\begin{array}{l}
1\leq q\leq B^2,\\
T\in\mathbb Q,\\
\text{denominator of }T \text{ is at most } B^2
\end{array}
\right\}.
\end{align*}
Now we show that although $\mathcal F_{a,B}$ is infinite, it is uniformly discrete. Let $L_B:=\operatorname{lcm}(1,2,\ldots,B^2)$.
If $\frac{q(T+a)}{1+a}$ and $\frac{q'(T'+a)}{1+a}$
are two elements of \(\mathcal F_{a,B}\), their difference is of the
form $\frac{m/L_B+(q-q')a}{1+a}$
for some \(m\in\mathbb Z\).
If \(q=q'\), a nonzero difference has absolute value at least $\frac{1}{L_B(1+a)}$.
If \(q\neq q'\), then \(q-q'\) belongs to the finite set $\{-B^2+1,\ldots,-1,1,\ldots,B^2-1\}$.
Since \(D\) is nonsquare, \(\lambda\) is irrational, and hence every
\(a\in\mathcal A_D\) is irrational. Therefore $\operatorname{dist}
\left(
(q-q')a,\frac1{L_B}\mathbb Z
\right)>0$.
Since there are only finitely many possible values of \(q-q'\), there
exists $d_{a,B}>0$
such that distinct elements of \(\mathcal F_{a,B}\) are separated by
at least \(d_{a,B}\).
Because \(\mathcal A_D\) is finite, we may put $d_B
:=
\min_{a\in\mathcal A_D}d_{a,B}>0.$
Increase \(W_0\) so that
\begin{align}
2\varepsilon_W<d_B.
\label{eq:epsilon-less-delta}
\end{align}
Fix \(a\in\mathcal A_D\) and fix one of the two choices
\[
z=z_0
\qquad\text{or}\qquad
z=-z_0.
\]
Then \(\frac{P_w(z)}W\) is fixed. By
\eqref{eq:resonance-approximation} and
\eqref{eq:epsilon-less-delta}, there is at most one value
\(
\frac{q(T+a)}{1+a}\in\mathcal F_{a,B}
\)
which can occur.

Moreover, the pair \((q,T)\) is uniquely determined by this value.
Indeed, if
\(
q(T+a)=q'(T'+a),
\)
then
\(
qT-q'T'+(q-q')a=0.
\)
Since \(a\) is irrational, this implies \(q=q'\), and hence
\(T=T'\).
Thus, for fixed \(a\) and a fixed sign of \(z_0\), the integer \(q\)
in \eqref{eq:z-cross-v} is fixed.

\medskip
\noindent
\textbf{Claim 7: $v\in \mathcal B_H(w)$ lies on finitely many affine
lattice lines.}

For fixed \(a\) and fixed \(z=\pm z_0\), equation
\eqref{eq:z-cross-v} has the form 
\begin{align}
z\times v=-qd
\label{eq:affine-line-equation}
\end{align}
for one fixed integer \(q\).

Since \(z\) is primitive, it can be completed to a basis of \(L_1\).
It follows that the set of solutions of
\eqref{eq:affine-line-equation} is either empty or is an affine
lattice line
\(
v_0+\mathbb Zz.
\)
There are at most \(N_D\) choices of \(a\), and two choices for the
sign of \(z_0\). Therefore
\(
\mathcal B_H(w)
\)
is contained in at most
\begin{align}
2N_D
\label{eq:number-lines}
\end{align}
affine lattice lines parallel to \(z_0\).

\medskip
\noindent
\textbf{Claim 8:$\# \mathcal B_H(w)
\leq
\frac{1}{8C_0d}\frac RQ$ for large enough $W$.}

Since \(z_0\) is primitive, the condition $\lambda_1(g_w L_1)\geq Hd \sqrt{\frac{Q}{R}}$ gives
\begin{align}
\|g_wz_0\|^2
=
QR\,T_w(z_0)^2
+
\frac{P_w(z_0)^2}{QR}
\geq
H^2d^2\frac QR.
\label{eq:normality-z0}
\end{align}
By \eqref{eq:z-transverse},
\(
|T_w(z_0)|
=
\frac{|w\times z_0|}{W}
\leq
C_{15}\frac{R^{1-r}}W.
\)
Hence
\begin{align}
QR\,T_w(z_0)^2
\leq
C_{16}d^2\frac QR
W^{-2(r-1)^2}.
\label{eq:transverse-negligible}
\end{align}
After increasing \(W_0\), the right-hand side is at most
\(
\frac14H^2d^2\frac QR.
\)
It follows from \eqref{eq:normality-z0} that
\(
\frac{P_w(z_0)^2}{QR}
\geq
\frac34H^2d^2\frac QR.
\)
In particular, we have $|P_w(z_0)|
\geq
\frac H2d Q.$
Consider one affine line
\(
v_0+\mathbb Zz_0.
\)
Successive points on this line have longitudinal coordinates differing
by
\(
|P_w(z_0)|
\geq
\frac H2d Q.
\)
The candidate interval
\(
R\leq P_w(v)\leq\frac32R
\)
has length \(R/2\). Therefore the number of candidate vectors on one
affine line is at most
\(
1+
\frac{R/2}{(H/2)d Q}
=
1+\frac{R}{Hd Q}.
\)
Since \(R/Q\to\infty\), we may increase \(W_0\) so that $1+\frac{R}{Hd Q}
\leq
\frac{2R}{Hd Q}.$
Then by \eqref{eq:number-lines}, we obtain
\begin{align}
\#\mathcal B_H(w)
\leq
2N_D\cdot
\frac{2R}{Hd Q}
=
\frac{4N_D}{Hd}\frac RQ.
\label{eq:bad-preliminary}
\end{align}
Since
\(
H\geq64C_0N_D,
\)
we have
\(
\frac{4N_D}{H}
\leq
\frac{1}{16C_0}
<
\frac{1}{8C_0}.
\)
Therefore
\[
\#\mathcal B_H(w)
\leq
\frac{1}{8C_0d}\frac RQ,
\]
as required.
\end{proof}

Let $H$ be a constant such that $H>H_D$ where $H_D$ is introduced in Lemma~\ref{lem:bad-child-estimate}.

\begin{theorem}[$H$-normal-child estimate]
\label{th:induction step}
There are constants $c_D=\frac{1}{8 C_0 d},C_D=1+6(1+\lambda)$ and $W_1=\max(W_0, e^{\frac{1}{\min (d,\lambda)}},
e^{1/(r-1)},
\left(\frac{4}{c_D(r-1)}\right)^{\frac{2}{r-1}})$ such that every $H$-normal splitting of $(d,\lambda)$-type satisfying $\covol L_1=d$ and $\|w\|\ge W_1$ has at least $ c_D\frac{\|w\|^{r-1}}{\log\|w\|}$
children $w'$, and every child satisfies:
\begin{itemize}
    \item[(i)] the selected type and covolume are preserved;
    \item[(ii)] $w'$ is $H$-normal;
    \item[(iii)] $\|w\|^r\le \|w'\|\le C_D\|w\|^r$;
    \item[(iv)] $|w\times w'|\le \frac{C_D}{\log\|w\|}$.
\end{itemize}
\end{theorem}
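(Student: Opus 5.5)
The plan is to count the candidate set $\mathcal V(w)$ from below with Lemma~\ref{lem: lambda_1}, subtract the bad set using Lemma~\ref{lem:bad-child-estimate}, and then check properties (i)--(iv) for each surviving child. Write $W=\|w\|$, $R=W^r$, $Q=W\log W$.

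\emph{Lower bound for $\#\mathcal V(w)$.} Apply $g_w$. The region cut out by condition (i) in Definition~\ref{def:good twist} is the rectangle
\[
R\le P_w\le \tfrac32R,\qquad \tfrac1{4Q}\le T_w\le\tfrac3{4Q}.
\]
Under $g_w$ it becomes a rectangle with transverse side $\sqrt{QR}\cdot\frac1{2Q}=\frac12\sqrt{R/Q}$ and longitudinal side $\frac{R}{2\sqrt{QR}}=\frac12\sqrt{R/Q}$, so it is a square $K$ of side $a=\frac12\sqrt{R/Q}$. Its points have longitudinal coordinate at most $\frac{3}{2}\sqrt{R/Q}=3a$ and transverse coordinate at most $\frac34\sqrt{R/Q}$, so $K\subset B(0,4a)$, which matches $\kappa=4$. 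Since $g_w$ has determinant one, $\covol(g_wL_1)=d$. $H$-normality gives
\[
\lambda_1(g_wL_1)\ge Hd\sqrt{Q/R}=\tfrac{Hd}{2a}\ge A_0\tfrac{d}{a},
\]
using $H\ge 2A_0$. Primitive vectors of $L_1$ correspond to primitive vectors of $g_wL_1$, so Lemma~\ref{lem: lambda_1} yields
\[
\#\mathcal V(w)\ge \frac{a^2}{C_0d}=\frac{1}{4C_0d}\frac RQ.
\]
Subtracting the bound \eqref{eq:bad-child-bound} leaves at least $\frac{1}{8C_0d}\frac RQ=c_D\frac{W^{r-1}}{\log W}$ good twists. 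For each good $v$ we fix one $s(v)\in\{1,2,3\}$ for which the child is $H$-normal.

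\emph{Distinctness of children.} We need distinct $v$ to give distinct $w'=w+s\eta(v)v$. Since $T_w(w')=T_w(v)\,s\eta(v)$, suppose two children coincide. Then $s\eta v=s'\eta'v'$, so $v$ and $v'$ are parallel. Both are primitive with positive $T_w$, hence $v=v'$. Then $\eta=\eta'$, which forces $s=s'$.

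\emph{Properties (i)--(iv).}
\begin{itemize}
\item[(i)] This is Lemma~\ref{lem:new splitting}. We need $w\times v$ below $\min(d,\operatorname{Area}(T_2)/\lambda)$. Here $|w\times v|=W\,T_w(v)\le \frac{3}{4\log W}$, and this is small once $W\ge e^{1/\min(d,\lambda)}$, using $\operatorname{Area}(T_2)=\lambda^2 d/d\cdot$ appropriately.
\item[(ii)] This is exactly the choice of $s(v)$.
\item[(iii)] The lower bound holds because $P_w(w')\ge W+P_w(v)\ge R$. The upper bound is $\|w'\|\le W+3(1+\lambda)\cdot 2R\le C_DR$.
\item[(iv)] We have $|w\times w'|=s\eta|w\times v|\le 3(1+\lambda)\frac{3}{4\log W}\le \frac{C_D}{\log W}$.
\end{itemize}

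\emph{Role of $W_1$ and the main obstacle.} The remaining terms in $W_1$ make the count at least $1$ and keep the asymptotic steps valid; that bookkeeping is routine. The main obstacle is checking that the hypotheses of Lemma~\ref{lem: lambda_1} hold exactly after applying $g_w$: that the image is a square of side $a$ inside $B(0,4a)$, and that the normality constant matches $A_0 d/a$. This verification is what fixes the scaling $\sqrt{QR}$ in $g_w$.
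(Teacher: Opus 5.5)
Your proposal is correct and follows the paper's proof essentially step for step. You map $\mathcal V(w)$ by $g_w$ onto a square of side $a=\frac12\sqrt{R/Q}$ inside $B(0,4a)$, apply the primitive lattice-point lemma (using $H\ge 2A_0$) to get at least $\frac{1}{4C_0d}\frac RQ$ candidates, remove at most $\frac{1}{8C_0d}\frac RQ$ of them with the bad-child estimate, and verify distinctness and (i)--(iv) exactly as the paper does; the threshold $e^{1/\min(d,\lambda)}$ comes from $\operatorname{Area}(T_1)=d$ and $\operatorname{Area}(T_2)/\lambda=\lambda$. The only difference is that the paper uses the last two terms of $W_1$ to guarantee at least two children, which is needed later for the Cantor set, rather than at least one.
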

\begin{proof}
Denote $R=\norm{w}^r$ and $Q=\norm{w}\log \norm{w}$.
Let $$\mathcal V(w)=
\left\{
v\in L_1^{\mathrm{pr}}:
R\le P_w(v)\le\frac32R,\quad
\frac1{4Q}\le T_w(v)\le\frac3{4Q}
\right\},$$ then $g_w \mathcal V(w)$ is a square of side $a=\frac{1}{2}\sqrt{\frac{R}{Q}}$. Also it is contained in $B(0,4a)$. Because $w$ is $H$-normal, we have $\lambda_1(g_w L_1)\ge \frac{Hd}{(R/Q)^{1/2}}>2A_0 d\sqrt{\frac{Q}{R}}$. By Lemma~\ref{lem: lambda_1}, the number of the primitive vectors in $\mathcal V(w)$ is at least $\frac{1}{4C_0 d}\cdot \frac{R}{Q}$. 
By Lemma~\ref{lem:bad-child-estimate}, the number of vectors in $\mathcal V(w)$ that are not $H$-good is at most $\frac{1}{8C_0 d}\frac{R}{Q}$ if $\norm{w}>W_0$. Then the number of $H$-good twists $v\in L_1$ is at least $\frac{1}{8C_0 d}\frac{R}{Q}$. Also, because $v\in \mathcal V(w)$, 
$0<\norm{w\times v}<\frac{3}{4\log W_1}<\min (d, \lambda)$. Then by applying Lemma~\ref{lem:new splitting} for some $s$, $(L_1^s,L_2^s,w_s)$ is again a splitting of type $(d,\lambda)$ and covolume which satisfies (i). 

Suppose that $v$ and $v'$ are distinct good twists for $w$ and write
$w_1=w+s \eta(v) v, w_2=w+s'\eta(v')v'$. $w_1=w_2$ implies that
$s \eta(v) v=s'\eta(v')v'$. So $v$ and $v'$ are parallel. But $v$ and $v'$ are primitive vectors in $L_1$, so $v= v'$ ($v=-v'$ is not allowed because $T_w(v)>0$). So $s=s'$ and $\eta(v)=\eta(v')$. So distinct good vectors give distinct children. 
Then the number of $H$-normal slits $w'=w+s(v)(1+\frac{\lambda}{\hat{d}(v)})v$ is at least $$c_D \frac{\norm{w}^{r-1}}{\log \norm{w}}$$ for some constant $c_D=\frac{1}{8 C_0 d}$. By $
W_1\ge
\max(
e^{1/(r-1)},
\left(\frac{4}{c_D(r-1)}\right)^{\frac{2}{r-1}})$, we can deduce that $c_D \frac{W_1^{r-1}}{\log W_1}\ge 2$ which implies the number of children is at least 2. 

Let $C_D=1+6(1+\lambda)$, we have $\norm{w}^r \le \norm{w'} \le C_D \norm{w}^r$. Because $v\in \mathcal V(w)$, we have
\begin{align*}
    |w\times w'|\le s(v)(1+\frac{\lambda}{\hat{d}(v)}) |w\times v| \le \frac{C_D}{\log \norm{w}}.
\end{align*}

\end{proof}

\subsection{Tree structure $\mathcal{T}(r)$ of splittings}
We then represent the collection of splittings through a \textit{graded tree structure} $\mathcal{T}(r)$. This tree captures the hierarchical evolution of splittings under the twisting operation:

\begin{itemize}
    \item \textit{Initial splitting}: In Section~\ref{sec:initial}, we will show the existence of an $H$-normal root $(L_1^{(1)},L_2^{(1)},w^{(1)})$ which satisfies the condition of Theorem~\ref{th:induction step}.
    \item \textit{Nodes and Scales}: A node at level $n\ge 2$, denoted by $S_{w^{(n)}}=(L_1^{(n)},L_2^{(n)},w^{(n)})$, has slit length constrained by the following scale:
    \begin{equation}\label{eq:length}
        \norm{w^{(1)}}^{r^{n-1}} < \norm{w^{(n)}} < C_D^\frac{r^{n-1}-1}{r-1} \norm{w^{(1)}}^{r^{n-1}}.
    \end{equation}
    \item \textit{Branching and Complexity}: 
    An edge exists from $S_w$ to $S_{w'}$ if $w'$ is obtained from $w$ via an $H$-good twisting operation along $v$ (see Definition~\ref{def:good twist}). For each $H$-good twist $v$, we choose only one $s$ from $\left\{1,2,3\right\}$ to obtain a new splitting with slit $w'$. The slit $w'$ by definition is $H$-normal.
    The branching factor $k$ is the number of $H$-good children guaranteed by Theorem~\ref{th:induction step}; it satisfies the density estimate:
    \begin{equation}\label{eq: number}
        k \ge c_D\frac{\norm{w}^{r-1}}{ \log \norm{w}}.
    \end{equation}
\end{itemize}

\section{Initial step}\label{sec:initial}
In this section, we construct the root node of the splitting tree. Let $1<r<2$ and $H$ be a constant such that $H\ge H_D$ where $H_D$ is given in Lemma~\ref{lem:bad-child-estimate}. Then we choose 
\begin{align}\label{def:W*}
   W_*=\max(W_1,e^{\frac{4C_Dr}{r-1}},(2C_D)^{\frac{1}{r^2-r}},4^{\frac{1}{r^2-1}},e^{24(1+\lambda)},C_D^{\frac{2r}{(r-1)^3}}) 
\end{align}
where $W_1$ is given by Theorem~\ref{th:induction step}.

\begin{proposition}[Existence of a normal root of the prescribed type]
\label{prop:normal-root}
Fix \(1<r<2\), \(H>0\), and \(W_*>1\). Then there exists a nonempty
open subset
\[
\mathcal U=\mathcal U(D,r,H,W_*)\subset \mathcal{E}_D
\]
such that every surface \(Y\in\mathcal U\) admits a splitting $(L_1,L_2,w)$
of \((d,\lambda)\)-type satisfying $\norm{w}>W_*$
and
\begin{align}
\lambda_1(g_wL_1)
\geq
H\,d
\sqrt{\frac{Q_w}{R_w}},
\label{eq:root-normality}
\end{align}
where $R_w=\|w\|^r$ and $Q_w=\|w\|\log\|w\|$.

Consequently, if \(X\in \mathcal{E}_D\) has dense
\(\GL_2^+(\mathbb R)\)-orbit, then some surface in the orbit
of \(X\) admits a splitting of \((d,\lambda)\)-type satisfying
\eqref{eq:root-normality}.
\end{proposition}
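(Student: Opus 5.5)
We will first build one explicit surface with a splitting of the required type and normality. Then we will spread the conditions to an open set by continuity. Finally, density of the orbit handles the consequence.

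\emph{Explicit model.} Start from a model surface $M_{D,x,y}$ with a short slit. Its splitting has $L_1=\Z(d\bfe_1)\oplus\Z\bfe_2$ and $L_2=\lambda\Z^2$, so it is of type $(d,\lambda)$ with $d'=d$, $d''=1$, $c_v=0$; hence $\gcd(A,e)=1$ and $\covol L_1=d$. Rather than using the short slit directly, we will pick a long slit. Choose a primitive vector $v_1\in L_1$ and, by Lemma~\ref{lem:new splitting}, a primitive $v_2=\frac{\lambda}{\hat d}v_1$. Applying the Dehn twist with a large shear parameter $s$ gives a $(d,\lambda)$-type splitting $(L_1^s,L_2^s,w_s)$ with $w_s=w+s(1+\frac{\lambda}{\hat d})v_1$. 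Here $\norm{w_s}\to\infty$ as $s\to\infty$, while the covolume stays $d$.

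\emph{Normality of the root.} Normality of this splitting is the key point. We need $\lambda_1(g_{w_s}L_1^s)\ge Hd\sqrt{Q/R}$, where $L_1^s=\Z v_1\oplus\Z(u_1+sv_2)$. A cleaner route is to use the $\GL_2^+(\R)$-action (via $\SL_2(\R)$, which preserves type and covolume) instead of twisting. Pick any splitting of type $(d,\lambda)$ with $\covol L_1=d$, and apply a rotation sending a prescribed direction $\theta$ to the slit direction. Then apply the diagonal flow $\mathrm{diag}(e^{-t},e^{t})$, which stretches $w$ so that $\norm{w}=W>W_*$.

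The normality condition says that the lattice $g_wL_1$ has no vectors shorter than $Hd\sqrt{Q/R}=Hd\,W^{(1-r)/2}\sqrt{\log W}$. Since $g_w$ has $\det=1$ and $g_wL_1$ has covolume $d$, this is a mild condition as $W\to\infty$: the bound tends to $0$. Concretely, we want $\lambda_1(g_w L_1)\gtrsim W^{-(r-1)/2+\epsilon}$. Start from a surface where $w$ is (after rotation) nearly orthogonal to a bounded-geometry lattice $L_1$. After stretching $w$ by the factor $e^t=W/\norm{w_0}$, the lattice $L_1$ is deformed by $\mathrm{diag}(e^{-t},e^t)$ composed with $g_w$. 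We then choose the stretch so that the composition $g_w\circ\mathrm{diag}(e^{-t},e^t)$ restricted to the original frame is a bounded matrix.

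The main obstacle is this bookkeeping. The rotation $U_w$ puts $w$ on the second axis, where it is contracted by $(QR)^{-1/2}$, while its transverse direction is expanded by $\sqrt{QR}$. So we pick the original lattice $L_1$ to be of the form $\mathrm{diag}(\sqrt{QR},(QR)^{-1/2})^{-1}\Lambda$ in the frame of $w$, with $\Lambda$ a fixed lattice whose $\lambda_1(\Lambda)$ is large, i.e.\ $\ge Hd$ suffices since $\sqrt{Q/R}<1$. This means we choose $L_1$ sheared along $w$. Realizing this requires prescribing $(L_1,w)$ jointly: take the model $M_{D,x,y}$ with a short slit, act by $h\in\GL_2^+(\R)$ sending $\Z(d\bfe_1)\oplus\Z\bfe_2$ to $g^{-1}\Lambda$ with $g=\mathrm{diag}(\sqrt{QR},(QR)^{-1/2})U$. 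The slit then becomes $hw_0$, and we must ensure $\norm{hw_0}=W$ with direction matching $U$. We will choose $(x,y)$ in the model and the long slit among saddle connections $w_0+m v$ obtained via Dehn twists (Lemma~\ref{lem:new splitting}) so that all constraints are met. The inequality is open, so a small neighborhood in period coordinates (Proposition~\ref{prop:open set}, where type is locally constant) gives $\mathcal U$.

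\emph{Consequence.} If the orbit of $X$ is dense, it meets $\mathcal U$. Type, covolume (after normalizing area by a scalar, handled in the open set) and inequality~\eqref{eq:root-normality} then hold for that orbit point.
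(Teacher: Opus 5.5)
Your outer framework matches the paper's. You take the model $M_{D,x,y}$ (type $(d,\lambda)$ with $d'=d$, $d''=1$, $c_v=0$) and use $\mathrm{SL}_2(\mathbb R)$ to make the slit long while $g_wL_1$ is a fixed lattice. You then push the strict inequality to an open set via Proposition~\ref{prop:open set} and finish with density of the orbit. But the step that actually constitutes the proof, exhibiting one surface with a long $H$-normal slit, is never carried out, and two of your intermediate claims fail.

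First, since $\det g_w=1$, the lattice $\Lambda=g_wL_1$ has covolume $d$, so Hermite's bound gives $\lambda_1(\Lambda)\le (2/\sqrt3)^{1/2}\sqrt d$. Your requirement $\lambda_1(\Lambda)\ge Hd$ is therefore impossible once $H\sqrt d>(2/\sqrt3)^{1/2}$, e.g.\ for every $H\ge H_D\ge 2A_0$. What is needed is only $\lambda_1(\Lambda)\ge Hd\sqrt{Q_W/R_W}$. A fixed $\Lambda$ such as $\sqrt d\,\mathbb Z^2$ satisfies this once $W$ is large, as you yourself noted a paragraph earlier.

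Second, you defer the joint choice of $(L_1,w)$ to choosing ``the long slit among saddle connections $w_0+mv$ obtained via Dehn twists.'' This cannot work as stated. A twist replaces $L_1$ by $L_1^s=\mathbb Zv_1\oplus\mathbb Z(u_1+sv_2)$, which differs from $L_1$ because $v_2=\frac{\lambda}{\hat d}v_1$ is an irrational multiple of $v_1$. So the relation $hL_1=g^{-1}\Lambda$ you arranged is lost. Moreover, bounding $\lambda_1(g_{w_s}L_1^s)$ for a twisted slit is exactly the hard problem treated in Lemma~\ref{lem:bad-child-estimate}; it cannot be arranged by fiat.

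The missing observation is that no long saddle connection needs to be found: the normalizing diagonal element itself turns the model's short slit into the long one. Set
\[
A_W=\sqrt{Q_WR_W},\qquad h_W=\mathrm{diag}\bigl((\sqrt d A_W)^{-1},\sqrt dA_W\bigr),\qquad t_W=W/(\sqrt dA_W).
\]
Then $t_W^2=W^{1-r}/(d\log W)\to0$ because $r>1$, so $M_{D,0,t_W}$ is an admissible model for large $W$. The map $h_W$ sends its slit $t_W\mathbf e_2$ to $W\mathbf e_2$. It also sends $\mathbb Z(d\mathbf e_1)\oplus\mathbb Z\mathbf e_2$ to $\mathbb Z(\frac{\sqrt d}{A_W}\mathbf e_1)\oplus\mathbb Z(\sqrt dA_W\mathbf e_2)$. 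Hence $g_{W\mathbf e_2}h_WL_1^{(0)}=\sqrt d\,\mathbb Z^2$, and $\lambda_1=\sqrt d\ge 2Hd\sqrt{Q_W/R_W}$ for large $W$.

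Your ``bounded matrix'' remark points straight at this, since it forces the initial slit length to be $\asymp W/A_W$. You did not notice that this forced length tends to $0$ and is therefore realized by the model's own slit, with no twisting. The factor $2H$ in the resulting strict inequality is what survives the perturbation to $\mathcal U$.
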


\begin{proof}
For \(W>1\), let $R_W:=W^r,
Q_W:=W\log W,
A_W:=\sqrt{Q_WR_W}.$
Define $t_W:=\frac{W}{\sqrt d\,A_W}$.
Since $t_W^2
=
\frac{W^2}{dQ_WR_W}
=
\frac{W^{1-r}}{d\log W}$,
we have $t_W\longrightarrow0$ as $W\longrightarrow\infty.$

Choose \(W\) sufficiently large that $W>W_*$, $0<t_W<\min\{\varepsilon,1,\lambda\},$
and
\begin{align}
2Hd\sqrt{\frac{Q_W}{R_W}}<\sqrt d.
\label{eq:strict-root-inequality}
\end{align}
This is possible because $\frac{Q_W}{R_W}
=
W^{1-r}\log W
\longrightarrow0$
as \(W\to\infty\).

Consider the model surface $M_{D,0,t_W}.$
It has a splitting $\bigl(L_1^{(0)},L_2^{(0)},w^{(0)}\bigr),$
where
\[
L_1^{(0)}
=
\mathbb Z(d\mathbf e_1)
\oplus
\mathbb Z\mathbf e_2,
\qquad
L_2^{(0)}
=
\mathbb Z(\lambda\mathbf e_1)
\oplus
\mathbb Z(\lambda\mathbf e_2),
\qquad
w^{(0)}=t_W\mathbf e_2.
\]
Set
\[
v_1^{(0)}:=d\mathbf e_1,
\qquad
u_1^{(0)}:=\mathbf e_2,
\]
and
\[
v_2^{(0)}:=\lambda\mathbf e_1,
\qquad
u_2^{(0)}:=\lambda\mathbf e_2.
\]
Then we have
\[
L_1^{(0)}
=
\mathbb Zv_1^{(0)}
\oplus
\mathbb Zu_1^{(0)}
\qquad
L_2^{(0)}
=
\mathbb Zv_2^{(0)}
\oplus
\mathbb Zu_2^{(0)}.
\]
Moreover, we have $v_2^{(0)}
=
\frac{\lambda}{d}v_1^{(0)}$ and $u_2^{(0)}
=
\lambda u_1^{(0)}.$
Thus the splitting is of \((d,\lambda)\)-type.

Define
\[
h_W
:=
\begin{pmatrix}
(\sqrt d\,A_W)^{-1}&0\\
0&\sqrt d\,A_W
\end{pmatrix}.
\]
Since
\(
\det h_W=1,
\)
we have
\(
h_W\in\mathrm{SL}_2(\mathbb R).
\)
Let
\(
Y_W:=h_W\cdot M_{D,0,t_W}.
\)
Since the eigenform locus is
\(\mathrm{SL}_2(\mathbb R)\)-invariant,
\(
Y_W\in \mathcal{E}_D.
\)

The transformed slit is
\[
\begin{split}
w_W
&=
h_Ww^{(0)}\\
&=
h_W(t_W\mathbf e_2)\\
&=
t_W\sqrt d\,A_W\mathbf e_2\\
&=
W\mathbf e_2.
\end{split}
\]
In particular,
\(
\|w_W\|=W>W_*.
\)
The transformed first lattice is
\[
\begin{split}
L_{1,W}
=
h_WL_1^{(0)}=
\mathbb Z\left(
\frac{\sqrt d}{A_W}\mathbf e_1
\right)
\oplus
\mathbb Z\left(
\sqrt d\,A_W\mathbf e_2
\right).
\end{split}
\]
The transformed second lattice is
\[
\begin{split}
L_{2,W}
=h_WL_2^{(0)}
=
\mathbb Z\left(
\frac{\lambda}{\sqrt d\,A_W}\mathbf e_1
\right)
\oplus
\mathbb Z\left(
\lambda\sqrt d\,A_W\mathbf e_2
\right).
\end{split}
\]
Define
\[
v_{1,W}
:=
\frac{\sqrt d}{A_W}\mathbf e_1,
\qquad
u_{1,W}
:=
\sqrt d\,A_W\mathbf e_2,
\]
and
\[
v_{2,W}
:=
\frac{\lambda}{\sqrt d\,A_W}\mathbf e_1,
\qquad
u_{2,W}
:=
\lambda\sqrt d\,A_W\mathbf e_2.
\]
Then
\[
L_{1,W}
=
\mathbb Zv_{1,W}\oplus\mathbb Zu_{1,W},
\qquad
L_{2,W}
=
\mathbb Zv_{2,W}\oplus\mathbb Zu_{2,W}.
\]
Furthermore,
\begin{align}
v_{2,W}
=
\frac{\lambda}{d}v_{1,W},
\qquad
u_{2,W}
=
\lambda u_{1,W}.
\label{eq:transformed-type-relation}
\end{align}
Thus the transformed splitting
\(
(L_{1,W},L_{2,W},w_W)
\)
is again of \((d,\lambda)\)-type.
We also have
\(
\operatorname{covol}(L_{1,W})
=
\left(
\frac{\sqrt d}{A_W}
\right)
\left(
\sqrt d\,A_W
\right)
=
d.
\)

We now verify the normality condition. Recall that
\(
T_{w_W}(z)
=
\frac{w_W\times z}{\|w_W\|},
\) and
\(
P_{w_W}(z)
=
\frac{\langle z,w_W\rangle}{\|w_W\|}.
\)
Since
\(
w_W=W\mathbf e_2,
\)
we have
\(
T_{w_W}(x_1,x_2)=-x_1,
\) and
\(
P_{w_W}(x_1,x_2)=x_2.
\)
Thus
\(
U_{w_W}
=
\begin{pmatrix}
-1&0\\
0&1
\end{pmatrix}.
\)
The normalizing matrix is
\(
g_{w_W}
=
\begin{pmatrix}
A_W&0\\
0&A_W^{-1}
\end{pmatrix}
U_{w_W}.
\)
Applying \(g_{w_W}\) to the generators of \(L_{1,W}\), we get
\(
g_{w_W}v_{1,W}
=
-\sqrt d\,\mathbf e_1
\)
and
\(
g_{w_W}u_{1,W}
=
\sqrt d\,\mathbf e_2.
\)
Consequently,
\(
g_{w_W}L_{1,W}
=
\sqrt d\,\mathbb Z^2,
\)
up to changing the sign of the first generator. Therefore
\(
\lambda_1(g_{w_W}L_{1,W})
=
\sqrt d.
\)
By \eqref{eq:strict-root-inequality}, we have
\begin{align}
\begin{split}
\lambda_1(g_{w_W}L_{1,W})
&=
\sqrt d\\
&\geq
2Hd\sqrt{\frac{Q_W}{R_W}}\\
&=
2H\operatorname{covol}(L_{1,W})
\sqrt{\frac{Q_W}{R_W}}.
\end{split}
\label{eq:strong-root-normality}
\end{align}
Thus the splitting is \(2H\)-normal and, in particular, \(H\)-normal.

The relations in \eqref{eq:transformed-type-relation}, the slit
splitting, and the strict inequality
\eqref{eq:strong-root-normality} persist under sufficiently small
deformations in the corresponding marked period-coordinate chart.
Equivalently, using the local parameterization from
Proposition~\ref{prop:open set}, the map
\[
(g,z)\longmapsto g\cdot M_{D,z}
\]
maps a neighborhood of
\(
\bigl(h_W,(0,t_W)\bigr)
\)
onto an open neighborhood
\(
\mathcal U\subset \mathcal{E}_D
\)
of \(Y_W\).
After shrinking this neighborhood if necessary, every
\(Y\in\mathcal U\) admits a splitting of \((d,\lambda)\)-type whose
slit has length greater than \(W_*\) and which satisfies
\eqref{eq:root-normality}.

Finally, if \(X\in \mathcal{E}_D\) has dense
\(\GL_2^+(\mathbb R)\)-orbit, then
\(
\GL_2^+(\mathbb R)\cdot X
\)
meets \(\mathcal U\). Hence some surface in the orbit of \(X\) admits
the required normal root splitting, which can replace $X$. Indeed, the Hausdorff dimension of nonuniquely ergodic directions is constant along each $\GL_2^+(\R)$-orbit, as the induced map on $\mathbb{RP}^1$ is bi-Lipschitz.
\end{proof}

\section{Cantor set and Hausdorff dimension 1/2}
In Section~\ref{sec:Tree}, we build a tree $\mathcal{T}(r)$ of splittings of type $(d,\lambda)$ with a parameter $r$. Also, in Section~\ref{sec:initial}, we show that the initial slit $w^{(1)}$ satisfies
\begin{align}\label{eq:initial length}
    \norm{w^{(1)}}> W_*.
\end{align}
In this section, we will first associate $\mathcal{T}(r)$ with a Cantor set $E(r)$, where $E(r)$ is a subset of non-uniquely ergodic directions on $(X,\omega)$. Second, We show that the Hausdorff dimension of $E(r)$ approaches $1/2$ as $r$ tends to 1, which completes the proof of Theorem~\ref{thm:main theorem}.

For a slit $w=(x,y)$, define $\theta(w):=[w]\in \R \mathbb{P}^1$ be the projective direction. 
Also, if $w'=(x',y')$, define
\begin{align*}
\dist_{\mathbb{RP}^1}([w],[w'])
=
\arcsin\!\left(
\frac{\norm{w\times w'}}
{\|w\|\,\|w'\|}
\right).
\end{align*}
For every $t\in [0,1]$, we have $t \le \arcsin t\le \frac{\pi}{2}t$. Let $\left\{ w^{(n)}\right\}$ be a path in the tree $\mathcal T(r)$. We have
\begin{align*}
\frac{\norm{w^{(n)}\times w^{(n+1)}}}{\norm{w^{(n)}}\norm{w^{(n+1)}}}\le \dist_{\mathbb{RP}^1}([w^{(n)}],[w^{(n+1)}]) \le \frac{\pi}{2} \frac{\norm{w^{(n)}\times w^{(n+1)}}}{\norm{w^{(n)}}\norm{w^{(n+1)}}}.
\end{align*}

Let $I(w)=\left\{
\xi\in\mathbb{RP}^1:
\dist_{\mathbb{RP}^1}(\xi,[w])
\leq \frac{1}{2\norm{w}^{r+1}}
\right\}$.
Let
\begin{align*}
F_i=
\bigcup_{\substack{S_w\in\mathcal T(r)\\
\operatorname{level}(S_w)=i}}
I(w).
\end{align*}
be the projective interval union of all slits at level $i$. 

\begin{lemma}\label{lem:subset}
If \(w'\) is a child of \(w\) in the tree $\mathcal{T}(r)$, then
    \begin{align}
        I(w') \subset I(w).
    \end{align}
\end{lemma}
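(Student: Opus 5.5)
We prove the inclusion with the triangle inequality on $\mathbb{RP}^1$: it suffices to show that
\[
\dist_{\mathbb{RP}^1}([w],[w']) + \frac{1}{2\norm{w'}^{r+1}} \le \frac{1}{2\norm{w}^{r+1}}.
\]
Then any $\xi\in I(w')$ satisfies $\dist(\xi,[w])\le \dist(\xi,[w'])+\dist([w'],[w])\le \frac{1}{2\norm{w}^{r+1}}$, so $\xi \in I(w)$. We use that $\dist_{\mathbb{RP}^1}$, defined through the arcsine of the normalized cross product, is the usual angular metric on lines and hence satisfies the triangle inequality.

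\textbf{Bounding the distance.} By Theorem~\ref{th:induction step}(iii),(iv) we have $\norm{w'}\ge \norm{w}^r$ and $|w\times w'|\le C_D/\log\norm{w}$. Using $\arcsin t\le \frac{\pi}{2}t$, this gives
\[
\dist([w],[w'])\le \frac{\pi}{2}\,\frac{C_D}{\norm{w}\,\norm{w}^r\log\norm{w}} = \frac{\pi C_D}{2\log\norm{w}}\cdot\frac{1}{\norm{w}^{r+1}}.
\]
For the second term, $\norm{w'}^{r+1}\ge \norm{w}^{r(r+1)}$, so
\[
\frac{1}{2\norm{w'}^{r+1}}\le \frac{1}{2\norm{w}^{r+1}}\,\norm{w}^{-(r^2-1)}.
\]

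\textbf{Comparing the two terms.} It remains to check that
\[
\frac{\pi C_D}{2\log\norm{w}} + \frac{1}{2}\norm{w}^{-(r^2-1)} \le \frac12 .
\]
Every node satisfies $\norm{w}\ge \norm{w^{(1)}}>W_*$, by \eqref{eq:initial length} and the growth \eqref{eq:length}. The choice of $W_*$ in \eqref{def:W*} gives the two needed smallness conditions. First, $W_*\ge 4^{1/(r^2-1)}$, so $\norm{w}^{-(r^2-1)}\le 1/4$ and the second term is at most $1/8$. Second, $W_*\ge e^{4C_Dr/(r-1)}$, so $\log\norm{w}\ge 4C_D r/(r-1)\ge 4C_D$, and the first term is at most $\pi/8<3/8$. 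The sum is therefore below $1/2$.

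\textbf{Main obstacle.} The only delicate point is making sure the constant from the arcsine comparison and $C_D$ is absorbed by the $\log\norm{w}$ factor. This is exactly why the estimate needs the logarithmic gain from Theorem~\ref{th:induction step}(iv) and a lower bound on $\norm{w}$ built into $W_*$. If the constant $\pi/2$ made the bound too tight, we would instead use the sharper fact that the relevant angles are tiny, so $\arcsin t\le 2t$ or even $(1+o(1))t$ holds.
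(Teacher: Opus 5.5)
Your route is the same as the paper's. You use the triangle inequality on $\mathbb{RP}^1$, then parts (iii) and (iv) of Theorem~\ref{th:induction step} with $\arcsin t\le\frac{\pi}{2}t$, then the two lower bounds on $W_*$ from \eqref{def:W*}. The final numerical check is wrong as written, though.

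You weaken $\log\|w\|\ge 4C_D r/(r-1)$ to $\log\|w\|\ge 4C_D$. That bounds the first term only by $\pi/8\approx 0.393$, and your claim $\pi/8<3/8$ is false. In fact $\pi/8+1/8=(\pi+1)/8>1/2$, so the inequality you reduced to,
$\frac{\pi C_D}{2\log\|w\|}+\frac12\|w\|^{-(r^2-1)}\le\frac12$, does not follow from the bounds you kept.

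The fix is to keep the factor you discarded. Since $1<r<2$, you have $r/(r-1)>2$, hence $\log\|w\|>8C_D$. The first term is then below $\pi/16$, and the sum is below $\pi/16+1/8<1/2$. This matches the paper's version of the step: $\frac{\pi C_D}{\log\|w\|}\le\frac{\pi(r-1)}{4r}<\frac{\pi}{8}$, combined with $\frac14+\frac{\pi}{8}<1$.

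Also, your proposed fallback $\arcsin t\le 2t$ is weaker than $\arcsin t\le\frac{\pi}{2}t$, not sharper. Only the $(1+o(1))t$ version would have rescued the weakened estimate.
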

\begin{proof}
Let $w'=w+s(1+\frac{\lambda}{\hat{d}})v$. By Theorem~\ref{th:induction step}, we have
\begin{align*}
    \dist_{\mathbb{RP}^1}([w],[w'])\le \frac{\pi}{2}\frac{|w'\times w|}{\norm{w'}\norm{w}}\le  \frac{\pi C_D}{2\norm{w}^{(1+r)}\log\norm{w}}.
\end{align*}
For any $\theta \in I(w')$,
\begin{align*}
    \dist_{\mathbb{RP}^1}(\theta,\theta(w))\le \frac{1}{2\norm{w'}^{r+1}}+ \frac{\pi C_D}{2\norm{w}^{(1+r)}\log\norm{w}}.
\end{align*}
By (\ref{def:W*}) and (\ref{eq:initial length}), we can deduce that $\frac{\norm{w}^{(r+1)}}{\norm{w'}^{r+1}}<\frac1{W_*^{(r-1)(r+1)}}\le \frac{1}{4}$ and $\frac{\pi C_D}{\log \norm{w}}<\frac{\pi C_D}{\log W_*}\le 
\frac{\pi(r-1)}{4r}<
\frac{\pi}{8}$. Hence $\frac14+\frac{\pi}{8}<1$, which implies
$$\dist_{\mathbb{RP}^1}(\theta,\theta(w))<\frac{1}{2\norm{w}^{(r+1)}}.$$
\end{proof}

\begin{lemma}\label{lem:separation}
Any two distinct projective intervals $I(w')$ and $I(w'')$ in $F(i)$ are separated by a distance greater than $\frac{1}{2\norm{w'}\norm{w''}}$.
\end{lemma}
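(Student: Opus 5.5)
The separation comes from an integrality fact: two distinct slits at the same level have cross product bounded below by a positive constant, because they differ by lattice-type vectors. After that it is a comparison of projective distance with interval radii.

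\textbf{Step 1: a lower bound for the projective distance.} Let $w',w''$ be distinct slits at level $i$. I first show $\abs{w'\times w''}\ge 1$, up to a constant depending only on $D$ that can be absorbed. This would give
\[
\dist_{\mathbb{RP}^1}([w'],[w''])\ge \frac{\abs{w'\times w''}}{\norm{w'}\norm{w''}}\ge \frac{1}{\norm{w'}\norm{w''}}.
\]
Both slits are holonomies of saddle connections joining the two zeros of $(X,\omega)$. So $w'-w''$ is the holonomy of an absolute class in $H_1(X;\Z)$, and $w'\times w''=w'\times(w''-w')$. I would use the root normalization from Proposition~\ref{prop:normal-root}: relative and absolute periods are controlled by the lattices $L_1,L_2$ with $\covol L_1=d$ and $\lambda L_1\subset L_2$, and these covolumes are preserved by the Dehn twists (Lemma~\ref{lem:new splitting}).

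A cleaner route is to argue down the tree. Let the paths to $w'$ and $w''$ agree up to a common ancestor $w$ at level $j<i$. Write $w'_j=w+s\eta v$ and $w''_j=w+s'\eta' v'$ for the two children of $w$ on these paths. Then
\[
w'_j\times w''_j = s\eta\,(v\times w)+s'\eta'\,(w\times v')+ss'\eta\eta'\,(v\times v').
\]
Here $v\times v'\in d\Z$, and $v\ne v'$ or $s\ne s'$ as in the proof of Theorem~\ref{th:induction step}. The first two terms are $O(1/\log\norm{w})$ by condition (i) of Definition~\ref{def:good twist}.

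\textbf{Step 2: separating the parent intervals.} If $v\times v'\neq 0$, then $\abs{w'_j\times w''_j}\gtrsim d$. Since $\eta$ is irrational, $v=\pm v'$ is excluded except in the case $v=v'$, $s\neq s'$, and that case is impossible within one parent because we chose a single $s(v)$. This gives the separation $\dist([w'_j],[w''_j])\gtrsim 1/(\norm{w'_j}\norm{w''_j})$. By Lemma~\ref{lem:subset}, $I(w')\subset I(w'_j)$ and $I(w'')\subset I(w''_j)$.

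\textbf{Step 3: subtracting the radii.} The separation of $I(w'_j)$ and $I(w''_j)$ is at least the distance between their centres minus the two radii $\frac{1}{2\norm{w'_j}^{r+1}}$ and $\frac{1}{2\norm{w''_j}^{r+1}}$. These radii are much smaller than $\frac{1}{\norm{w'_j}\norm{w''_j}}$, because $\norm{w'_j}\asymp\norm{w''_j}$ up to the factor $C_D$ by (iii) of Theorem~\ref{th:induction step}. Then $\norm{w'_j}^{r}\ge W_*^r$ dominates $C_D$, using the size conditions in \eqref{def:W*}.

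\textbf{Step 4: the main obstacle.} The lemma claims the bound in terms of the level-$i$ lengths $\norm{w'}\norm{w''}$, not the level-$j$ ones. Since $\norm{w'}\ge\norm{w'_j}$, the bound $\frac{1}{\norm{w'_j}\norm{w''_j}}$ is stronger, so this direction is fine. The genuinely delicate point is obtaining the constant $\tfrac12$, i.e.\ controlling the $O(1/\log\norm{w})$ cross terms and the radius losses against $d\ge1$. I would first try to absorb these losses using $\log W_*\ge 24(1+\lambda)$ from \eqref{def:W*}. If the constant does not come out exactly, a weaker constant suffices for the Hausdorff dimension argument.
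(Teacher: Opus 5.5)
Your argument is essentially the paper's. The paper inducts on the level: siblings are handled via $|v'\times v''|\in d\mathbb{Z}\setminus\{0\}$, small cross terms and subtraction of the radii, and non-siblings via the inductively separated parent intervals. Your jump to the branching point, combined with Lemma~\ref{lem:subset}, is the same induction unrolled, and the homological route at the start of Step~1 is not needed.

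Two small corrections.

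First, irrationality of $\eta$ does not exclude $v'=-v$. If that case occurred, $w'_j\times w''_j=(s\eta+s'\eta')(v\times w)$ would be only $O(1/\log\|w\|)$. The correct reason is condition (i) of Definition~\ref{def:good twist}, which forces $T_w(v),T_w(v')>0$.

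Second, the constant $\tfrac12$ does come out exactly, so no weakening is needed:
\begin{itemize}
\item The main term is at least $d(1+\lambda/d)^2=d+1+2\lambda+e\lambda/d>2$.
\item The cross terms are at most $6(1+\lambda)/\log\|w\|\le\tfrac14$, by $W_*\ge e^{24(1+\lambda)}$. Hence the centres are more than $1/(\|w'_j\|\|w''_j\|)$ apart.
\item The bound $W_*\ge(2C_D)^{1/(r^2-r)}$ makes the two radii sum to less than $1/(2\|w'_j\|\|w''_j\|)$.
\end{itemize}
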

\begin{proof}
We argue by induction on the level.

Consider the first case: $I(w'),I(w'') \subset I(w)$.
Write $w'=w+s\eta(v')v'$ and $w''=w+s'\eta(v'')v''$ where $\eta(v)=1+\frac{\lambda}{\hat d(v)}$. Here, $v'$ and $v''$ are distinct primitive elements chosen from the same lattice (the case $v''=-v'$ is excluded); it follows that $\norm{v' \times v''} \ge d \ge 1$. Also, $1+\frac{\lambda}{d} \le \eta(v'),\eta(v'')\le 1+\lambda$. Then
 \begin{align*}
     \norm{w'\times w''}&\ge ss'(1+\frac{\lambda}{d})^2\norm{v'\times v''}-(1+\lambda)( s'\norm{w\times v''}+s\norm{v'\times w})\\
                   &> d(1+\frac{\lambda}{d})^2- (1+\lambda)\frac{6}{\log\norm{w}}.
 \end{align*}
Since $\lambda^2=e\lambda+d$, we have
$d\left(1+\frac{\lambda}{d}\right)^2
=d+2\lambda+\frac{\lambda^2}{d}
=d+1+2\lambda+\frac{e\lambda}{d}>2.$
By (\ref{def:W*}) and (\ref{eq:initial length}), we have
$\norm{w}> W_* \ge e^{24(1+\lambda)}$ which gives
$\frac{6(1+\lambda)}{\log\|w\|}\le\frac14.$
Consequently,
$|w'\times w''|>2-\frac14>1.$
It follows that $\dist_{\mathbb{RP}^1}([w'],[w''])>\frac{1}{\norm{w'}\norm{w''}}$. 
For any $x\in I(w'),y\in I(w'')$, we have
\begin{align*}
    \dist_{\mathbb{RP}^1}(x,y)\geq \dist_{\mathbb{RP}^1}([w'],[w''])-\frac{1}{2\norm{w'}^{r+1}}-\frac{1}{2\norm{w''}^{r+1}}.
\end{align*}
By (\ref{def:W*}) and  (\ref{eq:initial length}), we have $\norm{w}>W_*\ge (2C_D)^{1/(r^2-r)}$. Then
\begin{align*}
    \frac{2\max(\norm{w'},\norm{w''})}{\min(\norm{w'},\norm{w''})^r}<\frac{2C_D\norm{w}^r}{\norm{w}^{r^2}}<1
\end{align*}
So
\begin{align*}
    \frac{1}{\norm{w'}^{r+1}}+\frac{1}{\norm{w''}^{r+1}}\le \frac{2}{\min(\norm{w'},\norm{w''})^{r+1}}<\frac{1}{\norm{w'}\norm{w''}}
\end{align*}
which implies that
\begin{align*}
    \dist_{\mathbb{RP}^1}(x,y)>\frac{1}{2\norm{w'}\norm{w''}}.
\end{align*}

In the second case, $I(w')\subset I(w)$ and $I(w'')\subset I(\hat{w})$ where $I(w)$ and $I(\hat{w})$ are two distinct intervals belonging to  $F(i-1)$. By induction,
\begin{align*}
\dist_{\mathbb{RP}^1}
\bigl(I(w),I(\hat w)\bigr)>
\frac{1}{2\|w\|\,\|\hat w\|}.
\end{align*}
Then for $x\in I(w'),\,y\in I(w'')$,
we have $ \dist_{\mathbb{RP}^1}(x,y)>\frac{1}{2\|w\|\,\|\hat w\|}.$
Since child lengths dominate parent lengths, we have
\begin{align*}
    \dist_{\mathbb{RP}^1}(x,y)>\frac{1}{2\norm{w'}\norm{w''}}.
\end{align*}
\end{proof}

Let 
\begin{align}
    E(r) = \bigcap_i F(i).
\end{align}

\begin{theorem}\label{thm: non-uniquely ergodic direction}
   $E(r)$ is a Cantor set in $\mathbb{RP}^1$ such that $E(r) \subset \Theta_\mathrm{NUE}(X,\omega)$.
\end{theorem}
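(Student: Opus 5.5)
We prove the two assertions separately: first that $E(r)$ is a Cantor set, then that every point of $E(r)$ is a non-uniquely ergodic direction.

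\emph{Cantor set.} By Lemma~\ref{lem:subset}, each $I(w')$ at level $i+1$ sits inside the interval $I(w)$ of its parent. Hence $F(i+1)\subset F(i)$, and $E(r)$ is a decreasing intersection of nonempty compact sets. It is therefore nonempty and compact.
\begin{itemize}
\item \emph{Totally disconnected.} The lengths of the intervals $I(w^{(n)})$ tend to $0$, because $\norm{w^{(n)}}\to\infty$ by \eqref{eq:length}. Lemma~\ref{lem:separation} shows that distinct intervals at the same level are disjoint, so no interval survives inside $E(r)$.
\item \emph{No isolated points.} Every node has at least two children by Theorem~\ref{th:induction step}, so each interval of $F(i)$ contains at least two disjoint intervals of $F(i+1)$. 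Thus every neighbourhood of a point of $E(r)$ contains other points of $E(r)$.
\end{itemize}
This is the standard nested-interval argument.

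\emph{Non-unique ergodicity.} Fix $\theta\in E(r)$. There is a unique path $S_{w^{(1)}},S_{w^{(2)}},\dots$ in $\mathcal T(r)$ with $\theta\in I(w^{(n)})$ for all $n$; uniqueness follows from the disjointness in Lemma~\ref{lem:separation}. Since $\theta\in I(w^{(n)})$ and $|I(w^{(n)})|\to0$, the directions $[w^{(n)}]$ converge to $\theta$. We verify the three hypotheses of Theorem~\ref{thm:nonergodic direction}, in which the $v_1^{(n)}=v$ are the good twists.

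\begin{itemize}
\item \emph{Summability.} Condition (i) of Definition~\ref{def:good twist} gives $|w^{(n)}\times v_1^{(n)}|\le \frac{3}{4\log\norm{w^{(n)}}}$. By \eqref{eq:length}, $\log\norm{w^{(n)}}\ge r^{n-1}\log\norm{w^{(1)}}$, so these terms are dominated by a geometric series and the sum converges.
\item \emph{Vanishing height.} Let $h_n$ be the component of $w^{(n)}$ transverse to $\theta$, so $h_n=\norm{w^{(n)}}\sin\dist(\theta,[w^{(n)}])$. Since $\theta\in I(w^{(n)})$, this gives $h_n\le \frac{\pi}{4}\norm{w^{(n)}}^{-r}\to0$.
\item \emph{Areas of the cylinders.} This is the main obstacle. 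The cylinder $C_1^{(n)}$ in $T_1^{(n)}$ with core $v_1^{(n)}$ has area $\covol(L_1^{(n)})-|w^{(n)}\times v_1^{(n)}|$. By Theorem~\ref{th:induction step}(i), $\covol(L_1^{(n)})=d$ is preserved. The correction $|w^{(n)}\times v_1^{(n)}|$ is at most $\frac{3}{4\log W_*}$, which is small by the choice \eqref{def:W*}. Hence $\operatorname{Area}(C_1^{(n)})\ge d/2$.

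Similarly, $\covol(L_2^{(n)})=\lambda^2$ by Lemma~\ref{lem:new splitting}. With $v_2^{(n)}=a(v)v_1^{(n)}$ and $a(v)\le\lambda$, this gives $\operatorname{Area}(C_2^{(n)})\ge \lambda^2-\lambda\cdot\frac{3}{4\log W_*}\ge\lambda^2/2$.

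The care needed here is to identify which pair of cylinders enters Theorem~\ref{thm:MS}. We must confirm that the cylinders attached to consecutive splittings are the ones bounded in this way, and that they are disjoint as required. This uses that $(v_1^{(n)},v_2^{(n)})$ satisfy \eqref{eq:cylinder}, which holds because $|w\times v|<\min(d,\lambda)$ as noted in the proof of Theorem~\ref{th:induction step}.
\end{itemize}

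With the three hypotheses in place, Theorem~\ref{thm:nonergodic direction} shows that $\theta$ is a nonergodic direction. In particular $\theta$ is not uniquely ergodic, so $E(r)\subset\Theta_{NUE}(X,\omega)$.
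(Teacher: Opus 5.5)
Your proposal is correct and follows essentially the same route as the paper. For the Cantor structure you use nesting via Lemma~\ref{lem:subset}, disjointness via Lemma~\ref{lem:separation}, and at least two children per node; for non-unique ergodicity you verify the hypotheses of Theorem~\ref{thm:nonergodic direction} using $|w^{(n)}\times v_1^{(n)}|\le \frac{3}{4\log\norm{w^{(n)}}}$, the preserved covolumes $d$ and $\lambda^2$, and $h_n=O(\norm{w^{(n)}}^{-r})$. Two small remarks: you should say explicitly that each node has finitely many children (the good twists lie in the finite set $\mathcal V(w)$), since this is what makes each $F(i)$ compact; and the disjointness of consecutive cylinders you flag is already handled inside the proof of Theorem~\ref{thm:nonergodic direction}, so you do not need to recheck it.
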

\begin{proof}
We first show $E(r)$ is a Cantor set.
By construction, the set of children of each node is finite. By Theorem~\ref{th:induction step}, every node has at least two children, so $E(r)$ is nonempty and compact. 
The interval lengths at level $n$ are less than $\frac{1}{\norm{w^{(1)}}^{r^{n-1}}}$ which goes to $0$ when $n\rightarrow \infty$. 
It remains to show that $E(r)$ has no isolated points. Let $\theta\in E(r)$, and let $I_n$ be the unique level-$n$ interval containing $\theta$. Since every node has at least two
children, one may choose, at a sufficiently deep level, a descendant
interval different from the one containing $\theta$. Continuing along
this branch gives a point $\theta'\in E(r)\cap I_n$ with
$\theta'\neq\theta$. Since $\operatorname{diam}(I_n)\to0$, such
points can be chosen arbitrarily close to $\theta$. Thus $E(r)$ is
perfect. By Lemma~\ref{lem:subset}, the sets $F(n)$ form a nested sequence of nonempty compact sets. 
Moreover, Lemma~\ref{lem:separation} shows that the basic intervals at
each level are pairwise disjoint, while their diameters tend to zero.
Hence $E(r)$ is totally disconnected. So $E(r)$ is a Cantor set.

Next, we show that every $\theta\in E(r)$ is a non-uniquely ergodic direction.
We check condition $(1)$ in Theorem~\ref{thm:nonergodic direction}. We have
\begin{align*}
     \sum_{i=2}^\infty \norm{w^{(i)}\times v_1^{(i)}}<\sum_{i=2}^\infty \frac{1}{\log \norm{w^{(i)}}}\le \sum \frac{1}{r^{i-1}\log \norm{w^{(1)}}} <\infty.
\end{align*}
For an $H$-good twist,
$\norm{w^{(n)}\times v_1^{(n)}}\le \frac{1}{\log\|w^{(n)}\|}$,
and similarly for $v_2^{(n)}=\frac{\lambda}{\hat d}v_1^{(n)}$. Then
\begin{align*}
    \operatorname{Area}(C_1^{(n)})= d- \norm{w^{(n)}\times v_1^{(n)}}\geq d- \frac{1}{\log \norm{w^{(n)}}}\geq d- \frac{1}{\log W_*},
\end{align*}
and
\begin{align*}
    \operatorname{Area}(C_2^{(n)}) = \lambda^2 - \norm{w^{(n)}\times v_2^{(n)}}\geq \lambda^2- \frac{\lambda}{\log \norm{w^{(n)}}}\geq \lambda^2- \frac{\lambda}{\log W_*}.
\end{align*}
Then $\operatorname{Area}(C_i^{(n)})\ge \frac12\operatorname{Area}(T_i)$
uniformly in n. So condition (2) in Theorem~\ref{thm:nonergodic direction} is satisfied.

For $\theta\in E(r)$, we have $\theta \in I(w^{(n)})$. Then $$\dist_{\mathbb{RP}^1}(\theta,\theta(w^{(n)}))
\le
\frac1{2\|w^{(n)}\|^{r+1}}.$$ So $\dist_{\mathbb{RP}^1}(\theta,\theta(w^{(n)}))\rightarrow 0$ as $n\rightarrow \infty$. This implies that the directions of $w^{(n)}$ converge to $\theta$.
Hence the perpendicular component satisfies
$h_n\le C\|w^{(n)}\|\cdot \|w^{(n)}\|^{-(r+1)}
= C\|w^{(n)}\|^{-r}\to0$. This finishes the proof of condition (3) in Theorem~\ref{thm:nonergodic direction}. So $\theta$ is a nonergodic direction, which is also a non-uniquely ergodic direction. We have
\begin{align*}
    \theta\in \Theta_\mathrm{NUE}(X,\omega).
\end{align*}
\end{proof}

\begin{theorem}\label{thm:lower bound}
We have $\dim_H(E(r))\ge \frac{1}{r+1+(r-1)^2}$.
Then, for every $1<r<2$, and taking $r\downarrow1$ gives the $\dim_H \Theta_{\mathrm{NUE}}(X,\omega)$ to $1/2$. 
\end{theorem}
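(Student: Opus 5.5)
We apply the standard mass-distribution (or Frostman-type) lower bound for Cantor sets built from nested intervals with controlled branching and gaps. This is the McMullen/Falconer-type estimate: if each level-$n$ interval contains at least $m_{n+1}$ level-$(n+1)$ intervals, pairwise separated by at least $\epsilon_{n+1}$, then
\[
\dim_H E \ge \liminf_{n\to\infty}\frac{\log(m_1\cdots m_{n})}{-\log(m_{n+1}\epsilon_{n+1})}.
\]
Since level-$n$ slit lengths are not uniform, we will not quote this formula directly. Instead we define a probability measure $\mu$ on $E(r)$ by splitting mass equally among the children of each node. We then verify the local estimate $\mu(J)\lesssim |J|^{s}$ for every $s<\frac{1}{r+1+(r-1)^2}$ and every interval $J$, and conclude by the mass distribution principle. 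By Theorem~\ref{thm: non-uniquely ergodic direction} this gives the bound for $\Theta_{NUE}$. Combined with Masur's upper bound $\frac12$ and letting $r\downarrow 1$, it yields Theorem~\ref{thm:main theorem}.

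\textbf{Step 1: bookkeeping along a branch.} Fix a path $w^{(1)},w^{(2)},\dots$ and write $W_n=\|w^{(n)}\|$. By Theorem~\ref{th:induction step}(iii) we have $W_n^r\le W_{n+1}\le C_DW_n^r$. Hence $\log W_n\asymp r^{n-1}\log W_1$, and the multiplicative error $C_D^{(r^{n-1}-1)/(r-1)}$ in \eqref{eq:length} contributes only a factor $W_n^{o(1)}$. This is where the condition $W_*\ge C_D^{2r/(r-1)^3}$ in \eqref{def:W*} enters. By \eqref{eq: number}, each node at level $j$ has at least $c_D W_j^{r-1}/\log W_j$ children. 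Therefore
\[
\mu(I(w^{(n)}))\le\prod_{j<n}\frac{\log W_j}{c_DW_j^{r-1}}.
\]
Since $\sum_{j<n}\log W_j\approx \frac{\log W_n}{r-1}$, this gives $\mu(I(w^{(n)}))\le W_n^{-1+\delta}$ with $\delta\to0$ as $W_*\to\infty$. The $\log$ factors and the constants $c_D$ are absorbed in the same way.

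\textbf{Step 2: local estimate.} Let $J$ be an interval with $|J|$ small, and let $I(w)$ at level $n$ be the smallest basic interval containing all of $E(r)\cap J$. Then $J$ meets at least two children $I(w'),I(w'')$. By Lemma~\ref{lem:separation}, $|J|\ge \frac{1}{2\|w'\|\|w''\|}\gtrsim W_n^{-2r}$. We then split into two regimes.

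In the first regime $J$ meets few children. Then $\mu(J)\lesssim \mu(I(w'))\le W_{n+1}^{-1+\delta}\approx W_n^{-r(1-\delta)}$. Comparing with $|J|\gtrsim W_n^{-2r}$ gives exponent about $\frac12$.

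In the second regime $J$ meets many children. The children lie in the parallelogram $\mathcal V(w)$, with directions spread over an interval of length about $\frac{1}{W_n^{1+r}}$. There are about $W_n^{r-1}$ of them, so their typical spacing is about $W_n^{-2r}$, and the number of children meeting $J$ is at most about $|J|\,W_n^{2r}+1$. This requires an upper bound on the number of children per unit angle, which comes from the upper bound in Lemma~\ref{lem: lambda_1}. It gives
\[
\mu(J)\lesssim \bigl(|J|W_n^{2r}\bigr)\,W_n^{-r(1-\delta)}\cdot W_n^{-(r-1)}\cdot W_n^{\delta}.
\]
Since $|J|\le|I(w)|\approx W_n^{-(r+1)}$, interpolating between the endpoints $|J|\approx W_n^{-2r}$ and $|J|\approx W_n^{-(r+1)}$ gives $\mu(J)\lesssim |J|^{s}$. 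The worst exponent occurs at $|J|\approx W_n^{-(r+1)}$ after accounting for the $C_D$-fluctuations of lengths. This produces the stated $\frac{1}{r+1+(r-1)^2}$, where the $(r-1)^2$ term absorbs the losses from the spread of child lengths between $W_n^r$ and $C_DW_n^r$.

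\textbf{Main obstacle.} The main obstacle is Step 2: controlling uniformly how many children can cluster in a short interval, since the children have non-uniform lengths and the two scales $W_n^{-2r}$ and $W_n^{-(r+1)}$ must be interpolated. I would first try the cleaner route of comparing $\mu$ to the worst-case uniform Cantor set. In that set each level-$n$ interval has length $W_n^{-(r+1)}$ and exactly $m_n\approx W_n^{r-1}$ children, separated by $W_n^{-2r}$. Applying the McMullen formula with $m_{n+1}\epsilon_{n+1}\approx W_n^{r-1}\cdot W_n^{-2r}$ and $\log(m_1\cdots m_n)\approx\log W_n$ gives the ratio
\[
\frac{\log W_n}{(r+1)\log W_n+O((r-1)^2\log W_n)},
\]
which is the claimed bound. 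The remaining work is to justify this comparison, using the upper count in Lemma~\ref{lem: lambda_1} to cap local density.
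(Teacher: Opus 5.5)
Your route is genuinely different from the paper's. The paper builds no measure. It prunes each level-$n$ node to exactly $m_n$ children ($m_n$ the minimal branching) and bounds $m_n$ below via \eqref{eq: number} and \eqref{eq:length}. It bounds the gap between level-$(n+1)$ intervals below by $\delta_n$, using Lemma~\ref{lem:separation} and the upper bound in \eqref{eq:length}. It then quotes Falconer's estimate $\liminf_n \log(m_1\cdots m_{n-1})/(-\log(m_n\delta_n))$. So your stated reason for not quoting that formula is mistaken. It needs only level-uniform lower bounds on branching and on gaps, not uniform lengths. Hence the comparison with a ``worst-case uniform Cantor set'' in your last paragraph needs no further justification, and no upper count at all. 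The paper pays for this brevity as follows: the lower length bound used for $m_n$ and the upper one used for $\delta_n$ drift apart by $C_D^{O(r^n/(r-1))}$. This leaves $\frac{2r}{r-1}\frac{\log C_D}{\log L}$ in the denominator, which is why $W_*\ge C_D^{2r/(r-1)^3}$ is imposed and where the $(r-1)^2$ comes from.

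Your mass-distribution route works, and can even avoid that loss, but Step~2 needs repair.

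\textbf{The display double counts.} $W_n^{-r(1-\delta)}\approx W_{n+1}^{-1+\delta}$ is already the mass of a single child, i.e.\ $\mu(I(w))$ divided by the $\gtrsim W_n^{r-1}/\log W_n$ children. The further factor $W_n^{-(r-1)}$ is therefore unjustified; with it, the worst case $|J|\approx W_n^{-(r+1)}$ would give exponent $r/(r+1)$, not $1/(r+1)$.

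\textbf{The density cap has the wrong source.} It does not come from Lemma~\ref{lem: lambda_1}. That lemma's hypotheses ($K$ a square of side $a$ inside $B(0,\kappa a)$, with $\lambda_1\ge A_0\Delta/a$) fail for the thin sectors that correspond to short angular windows. The cap comes from Lemma~\ref{lem:separation}: the children's intervals are more than $(2C_D^2W_n^{2r})^{-1}$ apart. So at most $1+2C_D^2W_n^{2r}|J|$ of them meet $J$, and $|J|\ge(2C_D^2W_n^{2r})^{-1}$ forces $n\to\infty$ as $|J|\to0$. Hence $\mu(J)\lesssim\min\{|J|W_n^{r+\epsilon_n},\,W_n^{-1+\epsilon_n}\}\le|J|^s\,W_n^{s(r+1)-1+\epsilon_n}$, which merges your two regimes.

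\textbf{Track lengths along the branch, not per level.} $W_{j+1}\le C_DW_j^r$ gives $\prod_{j<n}W_j^{r-1}\ge W_n/(C_D^{n-1}W_1)$. The remaining $\log W_j$, $c_D$, $C_D$ factors are $e^{O(n^2)}=W_n^{o(1)}$, since $\log W_n\ge r^{n-1}\log L$. So $\epsilon_n\to0$ uniformly, and you get $\dim_H E(r)\ge\frac1{r+1}$ with no condition linking $W_*$ to $C_D$. This implies the stated bound. Your assertion that the $(r-1)^2$ term ``absorbs the losses'' is not actually derived in the proposal, and with this bookkeeping it becomes unnecessary.
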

\begin{proof}
Let $m_n\ge2$ be the minimum number of level-$n+1$ intervals contained in a level-$n$ interval. If necessary, in each parent
interval we retain exactly \(m_n\) children. This gives a Cantor subset
\(E'(r)\subset E(r)\), and therefore
\[
\dim_H E(r)\ge \dim_H E'(r).
\]

Put \(L=\|w^{(1)}\|\). By Lemma \ref{lem:separation} and the upper bound in (\ref{eq:length}), any two
distinct retained intervals at level \(n+1\) are separated by at least
\[
\delta_n:=
\frac{1}{
2C_D^{\,2(r^n-1)/(r-1)}L^{2r^n}
}.
\]
The sequence $\left\{\delta_n\right\}$ is decreasing. Since all basic intervals
are contained in the initial projective interval \(I(w^{(1)})\), we may
identify this interval with a compact interval in \(\mathbb R\); this
identification is bi-Lipschitz and hence preserves Hausdorff dimension.
Thus Falconer's Cantor-set lower bound\cite{Fa} applies to \(E'(r)\), giving
\[
\dim_H E(r)\ge
\liminf_{n\to\infty}
\frac{\log(m_1\cdots m_{n-1})}
     {-\log(m_n\delta_n)}.
\]
By (\ref{eq:length}) and (\ref{eq: number}), $m_n\geq c_D\frac{(L^{r^{n-1})^{r-1}}}{r^{n-1}\log L + \frac{r^{n-1}-1}{r-1}\log C_D}$. So we have
\begin{align*}
\log(m_1\cdots m_{n-1})&\geq \sum_{j=1}^{n-1}r^{j-1}(r-1)\log L -\frac{n(n-1)}{2}\log r- O(n)\\
&\ge (r^{n-1}-1) \log L-\frac{n(n-1)}{2}\log r-O(n).
\end{align*}
Also,
$$-\log(m_n\delta_n)\le (r^n+r^{n-1})\log L+\frac{2r^n-1}{r-1}\log C_D +O(n).$$
Then the Hausdorff dimension of $E(r)$ satisfies:
\begin{align*}
    \dim_H(E(r)) 
             & \ge \liminf_{n \to \infty} \frac{(r^{n-1}-1) \log L-\frac{n(n-1)}{2}\log r-O(n)}{(r^n+r^{n-1})\log L+ (\frac{2r^n-1}{r-1}\log C_D)+O(n)}                    \\
             & = \frac{\log L}{(r+1)\log L +(\frac{2r}{r-1}\log C_D)}\\
             &= \frac{1}{r+1+ \frac{2r}{r-1}\frac{\log C_D}{\log L}}.
\end{align*}
By (\ref{def:W*}), $L>[1+6(1+\lambda)]^{\frac{2r}{(r-1)^3}}$ for $C_D=1+6(1+\lambda)$. Then for every $1<r<2$,
\begin{align*}
   \dim_H(E(r)) \ge \frac{1}{r+1+(r-1)^2}. 
\end{align*}
By Theorem~\ref{thm: non-uniquely ergodic direction}, 
$$\dim_H\Theta_{\mathrm{NUE}}(X,\omega)
\ge
\frac{1}{r+1+(r-1)^2}.$$ 
Letting $r\downarrow 1$, we obtain
$\dim_H \Theta_{\mathrm{NUE}}(X,\omega)\ge \frac{1}{2}$.
\end{proof}

\begin{proof}[Proof of \Cref{thm:main theorem}]
By Masur's theorem \cite{Ma2}, for every translation surface the set of non-uniquely ergodic
directions has Hausdorff dimension at most \(1/2\). This gives
\[
\dim_H \Theta_{\mathrm{NUE}}(X,\omega)\le 1/2.
\]
By \Cref{thm:lower bound},
\[
\dim_H \Theta_{\mathrm{NUE}}(X,\omega)\ge 1/2.
\]
\Cref{thm:main theorem} follows from the above two inequalities.
\end{proof}

\bibliography{ref}
\bibliographystyle{alpha}
\end{document}